\def\Z{{\mathbb Z}}
\def\N{{\mathbb N}}
\def\G{{\mathscr G}}
\def\I{{\mathscr I}}
\def\ra{\rightarrow}
\def\ep{\varepsilon}
\def\ph{\varphi}
\def\<{\langle}
\def\>{\rangle}
\def\leq{\leqslant}
\def\geq{\geqslant}
\def\st{\operatorname{star}}
\def\obj{\operatorname{obj}}
\def\fis{{\it FIS}}
\def\id{\operatorname{id}}
\def\der{\textbf{Der}}
\def\homogpd{\textbf{OGpd}}
\def\ol{\overline}
\def\dom{\mathbf d}
\def\ran{\mathbf r}
\def\ogpd{{\sf OGPD}}
\def\twH{\widetilde{H}_{\phi}}
\def\twHpsi{\widetilde{H}_{\psi}}
\def\act{\operatorname{Act}}
\def\cov{\operatorname{Cov}}
\def\br{\operatorname{BR}}
\newcommand{\twobar}{/\kern-0.3em/}
\numberwithin{equation}{section}
\title{Fibrations of ordered groupoids and the factorization of ordered functors}
\author{Nouf AlYamani,
N. D. Gilbert, and E.C. Miller.}
\address{
School of Mathematical and Computer Sciences\\
and the Maxwell Institute for the Mathematical Sciences,\\
Heriot-Watt University, Edinburgh EH14 4AS, U.K}
\email{naaa4@hw.ac.uk, N.D.Gilbert@hw.ac.uk, ecm003@hotmail.com}
\thanks{The first author acknowledges the support of a PhD scholarship from the Royal Government of Saudi Arabia, held whilst this work was carried out.
The third author acknowledges the financial support of a studentship from the Carnegie Trust for the Universities of Scotland.}
\date{}
\newtheorem{theorem}{Theorem}[section]
\newtheorem{prop}[theorem]{Proposition}
\newtheorem{lemma}[theorem]{Lemma}
\newtheorem{cor}[theorem]{Corollary}
\theoremstyle{definition}
\newtheorem{example}[theorem]{Example}
\newtheorem{defn}[theorem]{Definition}
\newtheorem{remark}[theorem]{Remark}
\begin{document}

\begin{abstract}
We investigate canonical factorizations of ordered functors of ordered groupoids through star-surjective functors.  Our main construction is a 
quotient ordered groupoid, depending on an ordered version of the notion of normal subgroupoid, that results is the factorization of an ordered
functor as a star-surjective functor followed by a star-injective functor.  Any star-injective functor possesses a universal factorization through a
covering, by Ehresmann's Maximum Enlargement Theorem.  We also show that any ordered functor has a canonical factorization through a functor with the
ordered homotopy lifting property.
\end{abstract}

\subjclass[2010]{Primary: 20L05; Secondary 18D15,55U35}

\keywords{ordered groupoid, functor, fibration, covering}

\maketitle

\section*{Introduction}
The idea of treating a groupoid -- a small category in which every arrow is invertible -- as an algebraic structure 
has been established as a profitable approach to problems in group theory, where groupoids offer algebraic analogues
of relevant notions from the homotopy theory of paths and covering spaces, and applications of group theory can often be more
conveniently phrased in terms of groupoids.  The book \cite{HiBook} by P.J.Higgins is an admirable introduction.    The role of groupoids in 
describing symmetry is concisely surveyed in \cite{Wei}.

In the 1950's,
C. Ehresmann developed a new foundation of differential geometry based on groupoids \cite{Ehres}.  His algebraic model of a pseudogroup of transformations
was an \emph{ordered} groupoid, in which the key structural idea corresponds to the restriction of domain.  The introduction of an ordered structure
leads to a separation from group theory since a group, when considered as an ordered groupoid, can only carry the equality ordering.  Instead, the
most closely related algebraic structure is that of an \emph{inverse semigroup}.  Inverse semigroups correspond to the subclass of inductive groupoids,
in which the poset of identity elements forms a semilattice.  The consequent close interactions between the theories of ordered groupoids and inverse semigroups
are  a major theme of the book \cite{LwBook} of Lawson.

In this paper, we consider some of the homotopy notions modelled by groupoids, but in the context of ordered groupoids.   Our starting point is the paper \cite{Br} on
fibrations of groupoids by  Brown, in which fundamental constructions for fibrations of groupoids are explored, and applications given to the non-abelian cohomology of groups and to
track groupoids in homotopy theory.  The groupoid notions of the homotopy-lifting property and the path-lifting property for a map of groupoids $p:G \ra H$ are shown to be equivalent, and equivalent to $p$ being star-surjective.  Any one of these properties thus defines $p$ as a \emph{fibration}.   Steinberg \cite{St} adopts star-surjectivity as the definition of a fibration
of ordered groupoids.  He undertakes a general study of factorization theorems for ordered groupoids, based on the semidirect product construction for one ordered groupoid acting on another
(also found in \cite{Br2}) and a left adjoint $\der$ -- the derived ordered groupoid -- to the semidirect product.  Steinberg uses the derived ordered groupoid to show that every
ordered groupoid morphism factors in a canonical way through a fibration, and that a star-injective morphism has a universal factorization through a covering.  The latter 
result is Ehresmann's Maximum Enlargement Theorem \cite{Ehres}.  Lawson's approach to this result \cite{LwBook} also demonstrates its applications in inverse semigroup
theory, and in particular to the theory of idempotent pure homomorphisms.
Another important precursor to our work is the thesis of Matthews \cite{Mthesis} and the paper \cite{LMP}.  Matthews studies inverse semigroups and ordered groupoids from
the point of view of abstract homotopy theory, showing that the category of ordered groupoids admits cocylinders, and as an application is able to deduce Steinberg's
Fibration Theorem.

This paper focusses on fibrations of ordered groupoids, and our first significant result is that in the ordered case, the homotopy lifting property and star-surjectivity
are no longer equivalent properties.  Star-surjectivity is still equivalent to path-lifting, but homotopy lifting is a stronger property: we therefore consider star-surjective
ordered functors -- \emph{fibrations} as in \cite{Br,St} -- and \emph{strong} fibrations that have the homotopy lifting property.  We establish some formal properties of these
classes of fibration in section \ref{fibrations}, and show in section \ref{quotog} how a quotient construction for ordered groupoids leads to a factorization of any morphism as a fibration followed by a star-injective
functor.   This is an extension to the ordered case of a basic result for groupoid morphisms \cite[Proposition 2.4]{Br}, but in the ordered case requires a remodelled
definition of a \emph{normal} ordered subgroupoid $A$ of an ordered groupoid $G$ (more general than that of \cite{Mthesis}) and the construction of a
quotient ordered groupoid $G \twobar A$, for which the canonical map $G \ra G \twobar A$ is always a fibration.

We then consider the Maximum Enlargement Theorem in section \ref{maxenlthm} and the Fibration Theorem in section \ref{fibthm}.  We give a direct proof of the
Maximum Enlargement Theorem, by constructing from a star-injective morphism $U \ra H$ a poset $P$ with a natural $H$--action such that the semidirect product
$H \ltimes P$ is an enlargement of $U$.  Coupled with our quotient construction, the Maximum Enlargement Theorem then gives a universal factorization of any
ordered groupoid morphism as a fibration followed by an enlargement followed by a covering.  We follow \cite{LMP} and use the mapping cocylinder $M^{\phi}$ of an ordered groupoid morphism $\phi: G \ra H$ in our discussion of the Fibration Theorem, but we show directly that the morphism $M^{\phi} \ra H$ is a strong fibration.  The factorization
of an ordered groupoid morphism $\phi: G \ra H$ as $G \ra M^{\phi} \ra H$ then implies that $\phi$ factorizes as an enlargement followed by a fibration followed by a covering.

\section{Groupoids and ordered groupoids}
\label{gpdogpd}
A {\em groupoid} $G$ is a small category in which every morphism
is invertible.  We consider a groupoid as an algebraic structure
(as in \cite{HiBook,LwBook}): the elements are the morphisms, and
composition is an associative partial binary operation.
The set of identities in $G$ is denoted $G_0$, and an element $g \in G$ has domain
$g\dom=gg^{-1}$ and range $g\ran=g^{-1}g$.  If every arrow in $G$ is an identity, we say that
$G$ is {\em trivial}.  The groupoid $\I$ has two identities, $0$ and $1$, and two non-identity morphisms,
$\iota$ and $\iota^{-1}$, with $\iota \dom = 0$ and $\iota \ran =1$.

Let $e \in G_0$.  Then the {\em star} of $e$ in $G$ is the set
$\st_G(e) = \{ g \in G : g \dom =e \}$, and the {\em local group} at $e$ is the set
$G(e) = \{ g \in G : g \dom = e = g \ran \}$.  
A functor
$\phi : G \ra H$ is said to be {\em star-surjective} if, for each $e \in G_0$,
the restriction $\phi : \st_G(e) \ra \st_H(e \phi)$ is surjective.
{\em Star-injective} and {\em star-bijective} functors are defined
similarly.   A star-surjective functor is also called a \emph{fibration}, a star-injective functor an \emph{immersion},
and a star-bijective functor a \emph{covering}.

The kernel of a functor $\phi$ is the inverse image of the identities:
\[ \ker \phi = \{ g \in G : g \phi \in H_0 \}. \]

\begin{lemma}
A functor $\phi: G \ra H$ is star-injective if and only if its kernel is equal to $G_0$.
\end{lemma}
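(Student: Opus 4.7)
The plan is straightforward: both directions follow from unwinding the definitions, using that a functor sends identities to identities and respects inverses. Before splitting into cases I would note the general fact that $G_0 \subseteq \ker \phi$ for \emph{any} functor $\phi$ (since $e \in G_0$ implies $e \phi \in H_0$), so the content of the lemma is really the equivalence between ``star-injectivity'' and ``$\ker \phi$ contains no non-identity elements''.

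For the ``only if'' direction I would take $g \in \ker \phi$, set $e = g \dom$, and observe that $g$ and $e$ both lie in $\st_G(e)$. Since $g \phi \in H_0$, it is its own domain, so
\[ g \phi = (g \phi) \dom = (g \dom) \phi = e \phi. \]
Star-injectivity on $\st_G(e)$ then forces $g = e \in G_0$.

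For the ``if'' direction I would take $g, h \in \st_G(e)$ with $g \phi = h \phi$ and consider $gh^{-1}$, which is defined because $g \dom = h \dom = e$, and which lies in the local group $G(e)$. Then $(gh^{-1}) \phi = (g \phi)(h \phi)^{-1}$ is an identity in $H$, so $gh^{-1} \in \ker \phi = G_0$; since $e$ is the unique identity in $G(e)$, we get $gh^{-1} = e$, whence $g = h$.

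There is no real obstacle here, just bookkeeping: the only point that deserves a moment's care is checking that $gh^{-1}$ has domain and range both equal to $e$, so that it really is in the local group and its image is forced to be an identity rather than merely a kernel element at some other vertex.
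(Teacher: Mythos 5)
Your ``only if'' direction is correct as written. The ``if'' direction, however, rests on a step that fails under the paper's conventions. Here $\st_G(e)=\{g: g\dom=e\}$ with $g\dom=gg^{-1}$, and a composite $ab$ exists only when $a\ran=b\dom$; consequently $gh^{-1}$ is defined only when $g\ran=h\ran$, i.e.\ $g^{-1}g=h^{-1}h$, which is not given for two coinitial arrows. Your justification ``$gh^{-1}$ is defined because $g\dom=h\dom=e$'' is therefore false: in $G=\I$ with $e=0$, $g$ the identity at $0$ and $h=\iota$, the product $gh^{-1}$ does not exist. (Nor can this be saved by switching to the opposite composition convention: then $gh^{-1}$ exists but is an arrow between the two ranges, not an element of $G(e)$.) The element you should form is $g^{-1}h$ (or $h^{-1}g$), which is defined precisely because $g$ and $h$ share the domain $e$. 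It is an arrow from $g\ran$ to $h\ran$ --- not, a priori, a local-group element, and it does not need to be one: $(g^{-1}h)\phi=(g\phi)^{-1}(h\phi)=(g\phi)\ran\in H_0$, so $g^{-1}h\in\ker\phi=G_0$, hence $g^{-1}h$ is an identity, forcing $g^{-1}h=g\ran$ and $h=g(g^{-1}h)=g$. This also shows your closing concern about ``an identity rather than a kernel element at some other vertex'' is a red herring: every kernel element maps to an identity by definition, and the hypothesis $\ker\phi=G_0$ is exactly what upgrades ``lies in the kernel'' to ``is an identity of $G$''; no appeal to local groups is needed.

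For comparison, the paper states this lemma without proof, regarding precisely this routine computation as immediate; so apart from the misidentified composite (and the unnecessary local-group detour), your argument is the intended one, and the repair is the one-line substitution of $g^{-1}h$ for $gh^{-1}$.
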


A \emph{normal} subgroupoid $N$ of a groupoid $G$ is a subgroupoid such that
\begin{itemize}
\item $N$ is wide in $G$: that is , $N_0=G_0$,
\item if $g \in G$ and $n \in N$ with $g^{-1}ng$ defined in $G$, then $g^{-1}ng \in N$.
\end{itemize}
We remark that $g^{-1}ng$ is defined if and only if  $n \in G(g \dom)$.  Now $N$ may well contain elements not in local groups, and
this definition follows that given by Higgins in \cite{HiBook}.   However, we note that Matthews \cite{Mthesis} assumes that
if $n \in G$ then $n \in G(e)$ for some $e \in G_0$.  This distinction will be of importance in section \ref{quotog}.
A normal subgroupoid $N$ then determines an equivalence relation $\simeq_N$ on $G$, defined by
\[ g \simeq_N h \Longleftrightarrow \; \text{there exist} \; m,n \in N \; \text{such that} \; g = mhn \,. \]
The set $G/N$ of $\simeq_N$--equivalence classes inherits a natural groupoid structure, see \cite{HiBook}.  The kernel of a functor $\phi: G \ra H$ is a 
normal subgroupoid, and the canonical map $G \ra G/ \ker \phi$ is a fibration, and $\phi$ then induces an immersion $G/ \ker \phi \ra H$.

An {\em ordered groupoid} $(G,\leq)$ is a groupoid $G$
with a partial order $\leq$ satisfying the following axioms:

\begin{enumerate}
\item[OG1] for all $g,h \in G$, if $g \leq h$ then $g^{-1} \leq h^{-1}$,
\item[OG2] if $g_1 \leq g_2 \,, h_1 \leq h_2$ and if the compositions
$g_1h_1$ and $g_2h_2$ are   defined, then $g_1h_1 \leq g_2h_2$,
\item[OG3] if $g \in G$ and $f$ is an identity of $G$ with $f \leq
g \dom$, there exists a unique element $(f|  g)$, called the {\em restriction}
of $g$ to $f$, such that $(f|g)\dom=f$ and $(f|g) \leq g$.
\end{enumerate}
As a consequence of [OG3] we also have:
\begin{enumerate}
\item[OG3*] if $g \in G$ and $f$ is an identity of $G$ with $f \leq
g \ran$, there exists a unique element $(g|  f)$, called the {\em corestriction}
of $g$ to $f$, such that $(g|f)\ran=f$ and $(g|  f) \leq g$,
\end{enumerate}
since the corestriction of $g$ to $f$ may be defined as $(f |   g^{-1})^{-1}$.

Let $G$ be an ordered groupoid and let $a,b \in G$.  Suppose that
$a \ran$ and $b \dom$ have a greatest lower bound -- which we
write as $(a \ran)(b \dom)$ -- in $G_0$.  Then we may define the
{\em pseudoproduct} of $a$ and $b$ in $G$ as
$$a \ast b = (a |  (a \ran)(b \dom))((a \ran)(b \dom) |  b),$$
where the right-hand side is a composition in the groupoid $G$. As 
Lawson shows in Lemma 4.1.6 of \cite{LwBook}, this is a partially
defined associative operation on $G$. 

Our discussion of ordered groupoids follows the terminology and notation
of \cite{LwBook}, except that we have interchanged the use of domain
and range and that we use $\ast$ to denote the pseudoproduct, rather
than $\otimes$.

An ordered groupoid $G$ is \emph{inductive} if $G_0$ is a meet semilattice.  In this case the
pseudoproduct is everywhere defined on $G$, and makes $G$ into an inverse semigroup.  On the
other hand, each inverse semigroup $S$ has an underlying inductive groupoid $G(S)$, whose arrows are the elements of $S$,
with $s\dom=ss^{-1}$ and $s\ran=s^{-1}s$, and where composition is just the restriction of the multiplication in $S$.
These constructions yield an isomorphism between the categories of inductive groupoids and of inverse semigroups,
see \cite[Theorem 4.1.8]{LwBook}.

\begin{example}
\label{presheaf}
Let $P$ be a poset and $G = (\G,P)$ be a presheaf of groups over $P$, with linking maps $\alpha^x_y$ for $x \geq y$ in $P$.  Here $\G$ assigns a group $G_x$ to
each $x \in P$ and $\alpha^x_y$ is a group homomorphism $G_x \ra G_y$.  We have $\alpha^x_x = \id$ for all $x \in P$, and whenever $x \geq y \geq z$ in $P$ then
$\alpha^x_y  \alpha^y_z  = \alpha^x_z$.  The groupoid composition on $G$ is just the collection of group operations in the groups $G_x$, and $G$  is ordered by $g \geq g \alpha^x_y$ whenever $x,y \in P$ with $x \geq y$ and $g \in G_x$.
\end{example}

A morphism of ordered groupoids is an order-preserving functor $A \ra B$.  We shall call
such functors simply \emph{ordered} functors.  We then have the category $\homogpd$ of ordered
groupoids and ordered functors.  The set of ordered functors $A \ra B$ is denoted by $\homogpd(A,B)$.
We note that the category of ordered groupoids is cartesian closed.  
So given ordered groupoids $A,B$ there exists an ordered groupoid
$\ogpd(A,B)$ such that there is a natural equivalence 
\begin{equation}
\label{exp_law}
\ogpd(A \times B,C) \ra \ogpd(B,\ogpd(A,C)) \,.
\end{equation}
The object set of $\ogpd(A,B)$ is $\homogpd(A,B)$.  Given two ordered functors $f,g : A \ra B$,
an arrow in $\ogpd(A,B)$ from $f$ to $g$ is an ordered natural transformation $\tau$ from $f$ to $g$: that is,
$\tau$ is an ordered function $\obj(A) \ra B$ such that, for each arrow $a \in A$ with $a \dom = x$ and $a \ran =y$,
the square
\[ \xymatrixcolsep{3pc}
\xymatrix{
xf \ar[d]_{x \tau} \ar[r]^{af} & yf \ar[d]^{y \tau}\\
xg \ar[r]_{ag} & yg}
\]
in $B$ commutes.

\section{Fibrations}
\label{fibrations}
Let $p: G \ra H$ be a morphism of groupoids.  Then $p$ has the {\em homotopy lifting property}
if, given any groupoid $A$ and a commutative square
\[ \xymatrixcolsep{3pc}
\xymatrix{
A \ar[d]_{i_0} \ar[r]^{f} & G \ar[d]^{p}\\
A \times \I \ar[r]_{F} & H}
\]
where $ai_0=(a,0)$, 
there exists a morphism $\widetilde{F} : A \times \I \ra G$ such that the diagram
\[ \xymatrixcolsep{3pc}
\xymatrix{
A \ar[d]_{i_0} \ar[r]^{f} & G \ar[d]^{p}\\
A \times \I \ar[r]_{F} \ar[ur]_{\widetilde{F}} & H}
\]
commutes.  Proposition 2.1 of Brown's paper \cite{Br} shows that possession of this property by $p$ is equivalent to $p$ being star-surjective, and so drawing on the evident topological analogy, Brown calls star-surjective morphisms {\em fibrations of groupoids}.  In fact,
\cite[Proposition 2.1]{Br} shows the equivalence of the homotopy lifting property, the path-lifting property, and star-surjectivity.
The path-lifting property is the special case of the homotopy lifting property in which the groupoid $A$ is a trivial groupoid with
one identity.

The homotopy lifting property and the path-lifting property for ordered groupoids are formulated by repeating the above definitions with the addition that every morphism should be a morphism of ordered groupoids.  A distinction then arises between the homotopy lifting property and being star-surjective.

\begin{prop}{\cite[Proposition 4.7]{LMP}}
A morphism of ordered groupoids has the path lifting property if and only if it is star-surjective: in particular, a 
morphism of ordered groupoids with the homotopy lifting property is star-surjective.
\end{prop}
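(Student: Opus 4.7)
The plan is to mirror Brown's unordered argument, exploiting the fact that the interval groupoid $\I$ carries only the trivial (equality) order, so that tensoring with $\I$ introduces no new order relations. First I would fix the standing setup: let $A$ denote the trivial groupoid with a single identity $\ast$, so that $A \times \I$ is isomorphic as an ordered groupoid to $\I$ itself, again with the trivial order. Under this identification, an ordered functor $f: A \to G$ is the same data as the choice of an object $e = \ast f \in G_0$, and an ordered functor $F: A \times \I \to H$ making the square commute is the same data as the choice of an arrow $h = \iota F \in \st_H(ep)$.

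For the direction path-lifting $\Rightarrow$ star-surjective, I would take arbitrary $e \in G_0$ and $h \in \st_H(ep)$, build the pair $(f,F)$ above, and read off from any lift $\widetilde{F}$ the element $\iota \widetilde{F} \in \st_G(e)$, which satisfies $(\iota \widetilde{F}) p = h$ by the commutativity $\widetilde{F} p = F$. Since $e$ and $h$ were arbitrary, this shows $p : \st_G(e) \to \st_H(ep)$ is surjective for every $e$.

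For the converse, starting from an arbitrary square with $f(\ast) = e$ and $\iota F = h$, star-surjectivity of $p$ at $e$ furnishes some $\widetilde{h} \in \st_G(e)$ with $\widetilde{h} p = h$. I would define $\widetilde{F}$ on $\I$ by $\iota \mapsto \widetilde{h}$, $\iota^{-1} \mapsto \widetilde{h}^{-1}$, and on identities by $0 \mapsto e$, $1 \mapsto \widetilde{h} \ran$; this is the unique functor on $\I$ with the prescribed action on $\iota$, and it is automatically \emph{ordered} because $A \times \I \cong \I$ has only the trivial order, so there are no comparisons to preserve. Commutativity of the triangle with $f$ along $i_0$ and with $F$ along $p$ is then immediate.

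Finally, the ``in particular'' clause is obtained by specializing the homotopy lifting property to $A$ trivial with one identity, which is exactly the path lifting property, and then invoking the equivalence just established. The only subtlety to watch in this argument is the conventions on the order: one must confirm that $\I$ is given the equality order and that the product order on $A \times \I$ is trivial when $A$ is trivial. Once those are in place the proof is essentially a translation of \cite[Proposition 2.1]{Br}; the genuine work of the paper begins only when $A$ has nontrivial order, at which point $A \times \I$ inherits it and the ordered lift must respect it, which is precisely why homotopy lifting becomes strictly stronger than star-surjectivity in the ordered setting.
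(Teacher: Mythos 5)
Your proposal is correct and follows essentially the same route as the paper's proof: both directions are handled by identifying a path with a single arrow via the trivial one-object groupoid (so $A \times \I \cong \I$ with the trivial order), lifting it by star-surjectivity, and observing that the trivial ordering on $\I$ makes any such lift automatically an ordered functor, with the ``in particular'' clause obtained by specializing homotopy lifting to the one-identity case. No gaps; your remark about where the order on $A$ genuinely enters matches the paper's subsequent counterexample showing homotopy lifting is strictly stronger.
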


\begin{proof}
As already remarked, the path-lifting property is a special case of the homotopy lifting property.
So suppose that $p:G \ra H$ has the path lifting property. Let $e$ be an identity of $G$ and suppose
that $h \in \st_H(ep)$. Let $0$ denote a trivial groupoid with a single identity. Any morphism from
$0 \times \I \ra H$ is determined by the choice of a single arrow in $H$, and will be ordered. Thus $e$ determines a 
morphism $0 \ra G$ and $h$ determines a morphism $0 \times \I \ra H$ making the commutative square 
\[ \xymatrixcolsep{3pc}
\xymatrix{
0 \ar[d]_{i_0} \ar[r]^{f} & G \ar[d]^{p}\\
0 \times \I \ar[r]_{F} & H \,.}
\]
The lifting $\widetilde{F} : 0 \times A \ra G$ in the commutative diagram 
\[ \xymatrixcolsep{3pc}
\xymatrix{
0 \ar[d]_{i_0} \ar[r]^{f} & G \ar[d]^{p}\\
0 \times \I \ar[r]_{F} \ar[ur]_{\widetilde{F}} & H}
\]
is then determined by an arrow $g \in G$ satisfying $g \dom = e$ and $gp=h$.  Therefore $p$ is star-surjective.

For the converse, given a commutative square
\[ \xymatrixcolsep{3pc}
\xymatrix{
0 \ar[d]_{i_0} \ar[r]^{f} & G \ar[d]^{p}\\
\I \ar[r]_{F} & H}
\]
with $p$ star-surjective, we obtain a morphism $\widetilde{F}: \I \ra G$ making
\[ \xymatrixcolsep{3pc}
\xymatrix{
0 \ar[d]_{i_0} \ar[r]^{f} & G \ar[d]^{p}\\
\I \ar[r]_{F} \ar[ur]_{\widetilde{F}} & H}
\]
commute by defining $\iota \widetilde{F}$ to be any arrow in $\st_G(0f)$ mapping to $\iota F \in H$.  Since $\I$ carries
the trivial ordering, any choice for $\iota \widetilde{F}$ determines an ordered morphism.
\end{proof}

However, a star-surjective ordered morphism need not have the ordered homotopy lifting property, as the following
counterexample shows.

\begin{example}
\label{not_ord_fib}
Let $E = \{e,f,z\}$ be the semilattice with $e \geq z \leq f$.  We take $G$ to be a semilattice of groups over $E$
with linking maps $\alpha^e_z : G_e \ra G_z$ and $\alpha^f_z : G_f\ra G_z$, and we take $H$ to be a semilattice of
groups over the semilattice $\{0,1\}$, where $0<1$, with linking map $\psi=\psi^1_0 : H_1 \ra H_0$.  Let $i:E \ra G$
be the inclusion map, and let $p:G \ra H$ be the star-surjective morphism determined by three surjections
$p_e:G_e \ra H_1, p_f:G_f \ra H_1$ and $p_z:G_z \ra H_0$.  Hence the following diagram commutes:
\begin{equation} \label{GHmorph}
 \xymatrixcolsep{3pc}
\xymatrix{
G_e \ar@{>>}[drrr]_-{p_e} \ar[ddr]^{\alpha^e_z} && G_f \ar@{>>}[dr]^{p_f} \ar[ddl]^{\alpha^f_z}|(.28)\hole &\\
& & & H_1 \ar[dd]^{\psi}\\
& G_z \ar@{>>}[rrd]_{p_z} && \\
&&& H_0}
\end{equation}

A homotopy $F: E \times \I \ra H$ is determined by two elements $h_e,h_f \in H$ satisfying $h_e\psi = h_f\psi$,
and then $(e,\iota)F=h_e, (f,\iota)F=h_f$ and $(z,\iota)F=h_e \psi=h_f\psi$.  

To construct the lifting homotopy $\widetilde{F}: E \times \I \ra G$ we are required to find $g_e,g_f \in G$
such that 
\begin{itemize}
\item $g_e \alpha^e_z = g_f \alpha^f_z$
\item $g_ep_e=h_e$ and $g_fp_f=h_f$
\item $g_e \alpha^e_z p_z = g_f \alpha^f_z p_z = h_e \psi = h_f \psi$.
\end{itemize}
Only the first of these conditions is imposed by the requirement that $\widetilde{F}$ be ordered.  To fulfill the
second and third conditions we can choose any preimages $g_e, g_f$ of $h_e,h_f$ under $p_e,p_f$ to satisfy the
second condition, and commutativity of \eqref{GHmorph}  guarantees the third condition.  

However, we cannot always make a consistent choice of $g_e,g_f$ to satisfy all three conditions.  Take
$G_e = \{1,a\}, G_f = \{ 1,b \}$ be cyclic groups of order two and let $G_z$ be the Klein four-group
$\{ 1,a,b,ab\}$ with $\alpha^e_z, \alpha^f_z$ the inclusions.  Take $H_1 = \{1,x\}$ and $H_0 = \{1,y \}$
also cyclic of order two, with $\psi: x \mapsto y$.  Let $p_e: a \mapsto x, p_f:b \mapsto x$
and $p_z: a \mapsto y, b \mapsto y$.  If we now choose $h_e = x = h_f$ then we have to take
$g_e = a$ and $g_f=b$ as preimages: but then $g_e \alpha^e_z = a \ne b = g_f \alpha^f_z$.  Hence no ordered lifting
homotopy exists.
\end{example}

The distinction between ordered fibrations and ordered morphisms with the ordered homotopy lifting property
was raised but left undecided in \cite{Mthesis}.  We shall follow Steinberg \cite{St} is defining a {\em fibration
of ordered groupoids} to be a star-surjective ordered morphism.  A {\em strong fibration} is defined to be a morphism
of ordered groupoids that has the homotopy lifting property.  We note a couple of constructions for fibrations.

\begin{prop}
\label{cov_is_strong}
\begin{enumerate}
\item[(i)] An ordered covering $p:G \ra H$ is a strong fibration,
\item[(ii)] A pullback of a fibration is a fibration,
\item[(iii)] A pullback of a strong fibration is a strong fibration.
\end{enumerate}
\end{prop}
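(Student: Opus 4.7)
The three parts are quite different in character: (i) exploits the uniqueness of star-lifts through a covering to manufacture a lifting functor by hand, while (ii) and (iii) are formal pullback arguments, using that pullbacks in $\homogpd$ are built as the set-theoretic pullback of arrows with the coordinatewise order and groupoid operations.

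For (i), given an ordered covering $p: G \ra H$ and a commutative square with $f: A \ra G$ and $F: A \times \I \ra H$, I would define $\widetilde{F}$ on objects by $(x,0)\widetilde{F} = xf$ and $(x,1)\widetilde{F} = g_x \ran$, where $g_x$ is the unique element of $\st_G(xf)$ satisfying $g_x p = (x,\iota)F$ (existence and uniqueness being exactly the star-bijectivity of $p$). On a general arrow $(a,\alpha) \in A \times \I$, define $(a,\alpha)\widetilde{F}$ as the unique star-lift of $(a,\alpha)F$ beginning at $(a,\alpha)\dom\widetilde{F}$, whose existence follows from the identity $(x,\delta)\widetilde{F} \cdot p = (x,\delta)F$ that has already been arranged on objects. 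Functoriality is forced by uniqueness: the product of two consecutive star-lifts is again a star-lift of the corresponding product in $H$ starting at the correct vertex, so it must agree with the prescribed lift. The same rigidity gives order-preservation: if $(a,\alpha) \leq (b,\beta)$ in $A \times \I$, then the restriction of $(b,\beta)\widetilde{F}$ to $(a,\alpha)\dom\widetilde{F}$ is a star-lift of $(a,\alpha)F$ from the correct start point, hence coincides with $(a,\alpha)\widetilde{F}$.

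For (ii), with $p: G \ra H$ a fibration and $q: K \ra H$ any ordered functor, realize the pullback $P = G \times_H K$ as the ordered subgroupoid of $G \times K$ consisting of pairs $(g,k)$ with $gp = kq$, equipped with the coordinatewise order; both projections $\pi_G$ and $\pi_K$ are then automatically ordered. To see $\pi_K: P \ra K$ is star-surjective, take $(e_G, e_K) \in P_0$ and $\ell \in \st_K(e_K)$, note that $\ell q \in \st_H(e_G p)$, invoke star-surjectivity of $p$ to obtain $g \in \st_G(e_G)$ with $gp = \ell q$, and observe that $(g,\ell) \in \st_P((e_G,e_K))$ projects to $\ell$. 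For (iii), suppose additionally that $p$ is a strong fibration, and consider an HLP test square $f: A \ra P$, $F: A \times \I \ra K$ with $i_0 F = f\pi_K$. The composite data $f\pi_G: A \ra G$ and $Fq: A \times \I \ra H$ satisfy $i_0(Fq) = (f\pi_G)p$, so the HLP of $p$ furnishes $\widetilde{G}: A \times \I \ra G$ with $i_0 \widetilde{G} = f\pi_G$ and $\widetilde{G}p = Fq$. The pair $(\widetilde{G}, F)$ then factors through $P$ by the universal property, yielding $\widetilde{F}: A \times \I \ra P$, and the identities $i_0 \widetilde{F} = f$ and $\widetilde{F}\pi_K = F$ follow by composing with $\pi_G$ and $\pi_K$ and invoking uniqueness in the pullback.

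The main obstacle is concentrated in (i): one must verify that a pointwise recipe of covering lifts really defines an ordered functor, and both checks reduce to the uniqueness clause embedded in star-bijectivity, applied once to composition and once to restriction. Parts (ii) and (iii) are then essentially bookkeeping, because the coordinatewise order on the pullback makes all the structural morphisms ordered automatically, and the universal property of pullbacks in $\homogpd$ functions exactly as in ordinary category theory.
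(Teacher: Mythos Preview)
Your proof is correct and follows essentially the same approach as the paper. For part (i) the paper is slightly more explicit, building $\widetilde{F}$ piecewise on $A \times \{0\}$ and on $A_0 \times \I$ and then setting $(a,\iota)\widetilde{F} = (af)g_{a\ran}$, whereas you define every arrow uniformly as the unique star-lift; but both arguments rest on the same point, namely that star-bijectivity forces both functoriality and order-preservation via uniqueness of restrictions. For (ii) and (iii) the paper simply declares the pullback results ``straightforward,'' and your detailed verification is exactly what one would fill in.
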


\begin{proof}
(i) We wish to construct $\widetilde{F}$ in a diagram
\[ \xymatrixcolsep{3pc}
\xymatrix{
A \ar[d]_{i_0} \ar[r]^{f} & G \ar[d]^{p}\\
A \times \I \ar[r]_{F} \ar[ur]_{\widetilde{F}} & H \,.}
\]
For $a \in A$ we must have $(a,0)\widetilde{F}=af$.  Now for any $x \in A_0$ we have
$(x,0)F=xi_0F=xfp$ and since $p$ is a covering, there is a unique $g \in \st_G(xf)$ with $gp = (x,\iota)F$.  We set
$(x, \iota)\widetilde{F}=g$.  Suppose that $y \leq x$ in $A_0$.  Then $yf \leq xf$ and $(yf|g)p \leq gp = (x, \iota)F$ with
$(yf|g)\dom=yf$.  But also $(y,\iota)F \leq (x,\iota)F$ with $(y,\iota)F\dom = yf$.  By uniqueness of restriction we have
$(y,\iota)\widetilde{F} = (yf|g) \leq g$.  We have now defined $\widetilde{F}$ consistently on $A \times \{0\}$ and on
$A_0 \times \I$: these definitions combine to give $(a,\iota)\widetilde{F}=(af)g$ where $g$ is the unique arrow in $\st_G((a\ran)f)$
with $gp=(a \ran,\iota)F$.  This defines an ordered lifting $\widetilde{F}$ of $F$.

(ii) and (iii)  The pullback results are straightforward.
\end{proof}

\begin{prop}
\label{fib_and_immer}
Suppose that a strong fibration $p: G \ra H$ factors as the composite of a morphism $\pi : G \ra T$ and an immersion $\psi: T \ra H$.
Then $\pi$ is a strong fibration.
\end{prop}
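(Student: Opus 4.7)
The plan is to transfer the lifting problem for $\pi$ into one for $p$ by post-composing with $\psi$, solve that with the strong fibration property of $p$, and then use star-injectivity of $\psi$ to recover the required lift through $\pi$.

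Given a commutative square whose top arrow is $f: A \to G$, left arrow is $i_0: A \to A \times \I$, right arrow is $\pi$, and bottom arrow is $F: A \times \I \to T$, I first post-compose $F$ with $\psi$. Since $\pi \psi = p$, the resulting square with $p$ on the right and $F\psi$ along the bottom commutes, so the strong fibration property of $p$ supplies an ordered functor $\widetilde{F}: A \times \I \to G$ satisfying $i_0 \widetilde{F} = f$ and $\widetilde{F} p = F \psi$. This $\widetilde F$ is the candidate lift for $\pi$; as it already fills the top triangle and is ordered, the only remaining task is to verify $\widetilde{F} \pi = F$ on the nose. The hard part lies precisely here: we know only that $\widetilde F \pi$ and $F$ agree after further post-composition with the star-injective (but not necessarily injective) functor $\psi$, so we must bootstrap this weak agreement into genuine equality.

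To bridge this gap, I argue in two stages. On objects of the form $(x, 0)$ with $x \in A_0$, we have $(x, 0) \widetilde{F} \pi = xf\pi = x i_0 F = (x, 0) F$ directly from the given commutative square. For an object $(x, 1)$, I use the bridging arrow $(x, \iota): (x, 0) \to (x, 1)$ in $A \times \I$: its images under $F$ and $\widetilde F \pi$ both start at $(x, 0) F$ and have the same image under $\psi$, so star-injectivity of $\psi$ forces them equal, whence their ranges $(x, 1) F$ and $(x, 1) \widetilde F \pi$ coincide as well. With agreement on all objects secured, the same star-injectivity argument applied to a general arrow $\alpha \in A \times \I$ --- whose images under $F$ and $\widetilde{F} \pi$ share a common domain and a common $\psi$-image --- yields $\alpha F = \alpha \widetilde{F} \pi$. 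Hence $\widetilde{F} \pi = F$, and $\widetilde{F}$ is the required ordered lift of $F$ through $\pi$, so $\pi$ has the ordered homotopy lifting property.
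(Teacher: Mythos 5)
Your proof is correct and follows essentially the same route as the paper: lift $F\psi$ through the strong fibration $p=\pi\psi$, then use star-injectivity of $\psi$ together with agreement of domains to force $\widetilde F\pi=F$. Your verification is slightly more granular (checking objects $(x,1)$ via the bridging arrows and then all arrows), whereas the paper handles only the arrows $(a,\iota)$ explicitly, but the key mechanism is identical.
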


\begin{proof}
Consider a commutative square
\[ \xymatrixcolsep{3pc}
\xymatrix{
A \ar[d]_{i_0} \ar[r]^{f} & G \ar[d]^{\pi}\\
A \times \I \ar[r]_{F}  & T \,.}
\]
Extend this diagram by $\psi$ and use the homotopy lifting property of $p=\pi \psi$ to lift $F_\ast = F \psi$ to $G$:
\[ \xymatrixcolsep{4pc}
\xymatrix{
A \ar[d]_{i_0} \ar[r]^{f} & G \ar[d]^{\pi}\\
A \times \I \ar[r]_{F} \ar[dr]_{F \psi} \ar[ur]_{\widetilde{F_\ast}} & T \ar[d]^{\psi} \\
& H \,.}
\]
Then $i_0 \widetilde{F_\ast} = f$.   Consider $(a,\iota) \in A \times \I$.  Then
\[ ((a,\iota) \widetilde{F_\ast} \pi) \dom = (a \dom,0) \widetilde{F_\ast} \pi = a \dom f \pi = (a \dom,0)F = ((a,\iota)\dom)F = ((a, \iota)F)\dom.\]
Now $(a,\iota) \widetilde{F_\ast} \pi \psi = (a,\iota) F \psi$, and since $\psi$ is an immersion, $(a,\iota) \widetilde{F_\ast} \pi = (a,\iota)F$.
Hence $\widetilde{F_\ast}$ lifts $F$.
\end{proof}

\begin{prop}
\label{exp_is_strong}
Let $p:G \ra H$ be a strong fibration of ordered groupoids.  Then for any ordered groupoid $T$, the functor $p_*: \ogpd(T,G) \ra \ogpd(T,H)$ induced by
composition with $p$ is a strong fibration.
\end{prop}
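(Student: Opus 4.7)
The plan is to transfer the required homotopy lifting property for $p_*$ to the homotopy lifting property for $p$ via the cartesian closed structure \eqref{exp_law}. Given an ordered groupoid $A$ and a commutative square
\[ \xymatrixcolsep{3pc}
\xymatrix{
A \ar[d]_{i_0} \ar[r]^{f} & \ogpd(T,G) \ar[d]^{p_*}\\
A \times \I \ar[r]_{F} & \ogpd(T,H) \,,}
\]
the natural equivalence \eqref{exp_law} transposes $f$ and $F$ to ordered functors $\hat{f}: T \times A \to G$ and $\hat{F}: T \times A \times \I \to H$, where we identify $T \times (A \times \I)$ with $(T \times A) \times \I$. The equality $i_0 F = f p_*$ transposes to the commutative square
\[ \xymatrixcolsep{3pc}
\xymatrix{
T \times A \ar[d]_{\id \times i_0} \ar[r]^{\hat{f}} & G \ar[d]^{p}\\
(T \times A) \times \I \ar[r]_{\hat{F}} & H \,,}
\]
whose left-hand vertical map is exactly the $i_0$-inclusion for the test groupoid $T \times A$.

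Since $p$ is a strong fibration, this square admits an ordered lift $\widetilde{\hat{F}}: (T \times A) \times \I \to G$ satisfying $(\id \times i_0)\widetilde{\hat{F}} = \hat{f}$ and $\widetilde{\hat{F}} p = \hat{F}$. Transposing $\widetilde{\hat{F}}$ back through \eqref{exp_law} yields an ordered functor $\widetilde{F}: A \times \I \to \ogpd(T,G)$, and by the naturality of the exponential adjunction the two identities satisfied by $\widetilde{\hat{F}}$ transpose respectively to $i_0 \widetilde{F} = f$ and $\widetilde{F} p_* = F$. Hence $\widetilde{F}$ is the required lift and $p_*$ is a strong fibration. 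The argument is essentially formal; the one step that deserves careful attention, rather than presenting a genuine obstacle, is the identification of products ensuring that the transposed square really is an instance of the HLP for $p$ with test groupoid $T \times A$, so that strong fibrancy of $p$ can be applied off the shelf.
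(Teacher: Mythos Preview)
Your proposal is correct and follows essentially the same route as the paper: both arguments transpose the homotopy lifting square for $p_*$ through the exponential adjunction \eqref{exp_law} to obtain a homotopy lifting square for $p$ with test groupoid $T \times A$, lift there using the hypothesis that $p$ is a strong fibration, and transpose back. Your version is slightly more explicit about the identification $T \times (A \times \I) \cong (T \times A) \times \I$ and about the naturality that guarantees the two lifting conditions transpose correctly, but the underlying idea is identical.
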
 

\begin{proof}
Suppose we are given a commutative square
\begin{equation}
\label{exp_law_fib}
 \xymatrixcolsep{3pc}
\xymatrix{
A \ar[d]_{i_0} \ar[r]^(.35){f} &  \ogpd(T,G) \ar[d]^{p_*}\\
A \times \I \ar[r]_(.4){F}  & \ogpd(T,H) \,.}
\end{equation}
The natural equivalence \eqref{exp_law} implies that there exists a corresponding commutative diagram
\[ \xymatrixcolsep{3pc}
\xymatrix{
T \times A \ar[d]_{i_0} \ar[r]^{f} & G \ar[d]^{p}\\
T \times A \times \I \ar[r]_(.65){F_\natural} \ar[ur]_(.7){\widetilde{F_\natural}} & H}
\]
in which $F_\natural$ lifts to $\widetilde{F_\natural}$.  The functor $\widetilde{F_\natural}$ then corresponds to
$\widetilde{F} : A \times I \ra \ogpd(T,G)$ making \eqref{exp_law_fib} commute.
\end{proof}

\begin{example}
If $p : G \ra H$ is star-surjective, then $p_*: \ogpd(T,G) \ra \ogpd(T,H)$ need not be star-surjective.  We take $E,G,H$ as in Example
\ref{not_ord_fib},  $T=E$, and the inclusion $i: E \ra G$ in $\ogpd(E,G)_0$.  A natural transformation from the functor $ip \in \ogpd(E,H)_0$
is an ordered mapping $\tau: E \ra H$ such that, for all $v \in E$ we have $v \tau \in H_{vp}$.  We take $e \tau = x = f \tau$ and $z \tau = y$.
Then a $\tilde{\tau} \in \ogpd(E,G)$ starting at $i$ with $\tilde{\tau}p=\tau$ must satisfy $v \tilde{\tau} \in G_v$: hence $e \tilde{\tau} = a$, $f \tilde{\tau} = b$,
and $z \tilde{\tau} \ne 1$.  But as before we cannot choose $z  \tilde{\tau} \in \{ a,b,ab \}$ to obtain $z \tilde{\tau} \leq a$ and $z \tilde{\tau} \leq b$.
\end{example}

In the unordered setting, as noted in section \ref{gpdogpd}, any groupoid morphism $\theta : G \ra H$ factorises as the composition of a canonical
fibration $G \ra G/ \ker \theta$ followed by a star-injective morphism $G / \ker \theta \ra H$.  However, if
$\theta : G \ra H$ is ordered, the quotient groupoid $G/ \ker \theta$ may not admit an ordering so that we get
an ordered fibration $G \ra G/ \ker \theta$. 

\begin{example}
\label{bicyclic}
Consider the simplicial groupoid $B = \N \times \N$ in which $\N$ carries the reverse of  its usual total order.
We order arrows in $B$ by $(p+m,p+n) \leq (m,n)$ for all $p,m,n \in \N$: hence $(0,0)$ is a maximum element.  
As a poset, $\N$ is now an infinite descending chain, and so $B$ is inductive.  The inverse semigroup corresponding to $B$ is the \emph{bicyclic
monoid} (see \cite[section3.4]{LwBook}). We define $\theta: B \ra \Z_2$ by $(m,n) \mapsto [m-n]_2$, so that
$\ker \theta = \{ (m,n) : m-n \; \text{is even} \;\}$.  Then $B/ \ker \theta$ is isomorphic to the groupoid $\I$ which can only
carry the trivial ordering, and hence $B \ra \I$ is not an ordered functor.
\end{example}

In the next section we shall present a modified quotient construction for ordered
groupoids that leads to an ordered factorisation of an ordered functor $\theta : G \ra H$.

\section{Quotients of ordered groupoids}
\label{quotog}
\begin{defn}
A subgroupoid $A$ of an ordered groupoid $G$ is an \emph{normal ordered subgroupoid} if
\begin{itemize}
\item $A$ is wide in $G$,
\item if $a \in A$ and $e \leq aa^{-1}$ then the restriction $(e|a) \in A$,
\item if $a \in A$ and $h,k \in G$ satisfy 
\begin{itemize}
\item[$\diamond$] $h \leq g$ and $k \leq g$ for some $g \in G$ (that is, $h$ and $k$ have
an upper bound in $G$),
\item[$\diamond$] $h^{-1}ak$ is defined in $G$,
\end{itemize}
then $h^{-1}ak \in A$.
\end{itemize}
\end{defn}

In the unordered case (when $\leq$ is equality) we recover Higgins' definition \cite{HiBook} of a normal subgroupoid: the second
condition is then vacuous, and in the third condition we have $h=k$ and then $h^{-1}ak$ being defined forces $a$ to be
in a local subgroup of $G$.  In the ordered case, if $A$ is a disjoint union of groups, then $h^{-1}ak$ being
defined implies that $hh^{-1}=kk^{-1}$, and then each of $h,k$ is the restriction of $g$, so by uniqueness of 
restriction, $h=k$ again and we recover Matthews' definition \cite{Mthesis}.

\begin{lemma}
\label{ker_is_normal}
The kernel of an ordered morphism $\theta: G \ra H$ is a normal ordered subgroupoid of $G$.  
\end{lemma}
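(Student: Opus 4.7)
The plan is to verify the three defining conditions of a normal ordered subgroupoid in turn, using only functoriality, order-preservation, and uniqueness of restriction.

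First, I would check that $\ker\theta$ is a wide subgroupoid of $G$. Since $\theta$ is a functor it sends identities to identities, so $G_0 \subseteq \ker\theta$; conversely $(\ker\theta)_0 \subseteq G_0$ trivially, giving wideness. Closure under inverses is immediate since $(g^{-1})\theta = (g\theta)^{-1}$, and an inverse of an identity is an identity. Closure under composition follows because if $g,h \in \ker\theta$ with $gh$ defined, then $g\theta$ and $h\theta$ are composable identities of $H$, hence equal, hence $(gh)\theta = g\theta \in H_0$.

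Second, for the restriction condition, suppose $a \in \ker\theta$ and $e \leq aa^{-1}$. An ordered functor preserves restrictions: indeed $(e|a)\theta$ has domain $e\theta$ and satisfies $(e|a)\theta \leq a\theta$, so by the uniqueness clause of OG3 in $H$ one has $(e|a)\theta = (e\theta \, | \, a\theta)$. Since $a\theta$ is an identity of $H$, its restriction to $e\theta \leq a\theta$ is simply $e\theta$, which lies in $H_0$. Hence $(e|a) \in \ker\theta$.

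The third condition is the main step and will be the principal obstacle, since this is exactly where the modified ordered definition of normality plays a role. Suppose $a \in \ker\theta$ and $h,k \in G$ have a common upper bound $g$ in $G$ with $h^{-1}ak$ defined. The key observation is that $h^{-1}a$ being defined forces $h\dom = a\dom$, and $ak$ being defined forces $a\ran = k\dom$. Applying $\theta$ and using that $a\theta$ is an identity, we obtain $h\theta\dom = a\theta = k\theta\dom$. But $h\theta \leq g\theta$ and $k\theta \leq g\theta$ in $H$, so by uniqueness of restriction (OG3 applied in $H$) we conclude $h\theta = k\theta$. Therefore
\[
(h^{-1}ak)\theta \;=\; (h\theta)^{-1}\cdot a\theta \cdot k\theta \;=\; (h\theta)^{-1}\cdot k\theta \;=\; (h\theta)^{-1}\cdot h\theta \;=\; h\theta\ran,
\]
which is an identity of $H$. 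Hence $h^{-1}ak \in \ker\theta$, completing the verification.
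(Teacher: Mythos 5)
Your proof is correct and follows essentially the same route as the paper: the key step in both is that $h\theta$ and $k\theta$ are restrictions of $g\theta$ to the same identity $a\theta$, hence equal by uniqueness of restriction, so $(h^{-1}ak)\theta=(h\theta)^{-1}(k\theta)\in H_0$. You simply spell out the wideness, closure, and restriction conditions that the paper treats as clear.
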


\begin{proof}Clearly $G_0 \subseteq \ker \theta$,
so suppose that $a \in \ker \theta$ , $h,k \in G$ with the composition $h^{-1}ak$ defined in $G$, and that there exists $g \in G$ with $h \leq g$ and $k \leq g$. Suppose that $a \theta = z \in H_0$. Then $(h^{-1}\theta)(k \theta)$ is defined in $H$, and $h \theta \leq g \theta, k \theta \leq g \theta$.  It follows that $h \theta = (z|g \theta) = k \theta$ and hence
\[ (h^{-1}ak)\theta = (h^{-1}\theta)(k \theta) = (h\theta)^{-1}(k \theta) \in H_0 \]
and so $h^{-1}ak \in \ker \theta$.
\end{proof}

\begin{example}
An inverse subsemigroup $K$ of an inverse semigroup $S$ is normal (as in \cite{LwBook})  if and only if $G(K)$ is a normal ordered subgroupoid
of $G(S)$. For suppose that $G(K)$ is normal in $G(S)$.  Consider $s^{-1}ks$ with $k \in K$ and $s \in S$.  We can write, in $S$:
\[ s^{-1}ks = (s^{-1}kss^{-1}k^{-1}) (ss^{-1}kss^{-1}) (k^{-1}ss^{-1}ks) \]
and the three bracketed terms on the right-hand side are now three composable arrows in $G(S)$.  The middle term is
$\leq k$ and so is in $G(K)$.   The first term is $\leq s^{-1}$ and the third term is $\leq s$ and so the whole product is in
$G(K)$ and so, as an element of $S$, is in $K$. On the other hand, if $K$ is a normal inverse subsemigroup of $S$, then
consider a composition $s^{-1}kt$ defined in $G(K)$ and where $s \leq u, t \leq u$ for some $u \in G$.  Then in $K$
we have $s^{-1}kt \leq u^{-1}ku$ and $u^{-1}ku \in K$ by normality.  So the composed arrow $u^{-1}ku \in G(K)$  and hence
so is $s^{-1}kt$ since $s^{-1}kt \leq u^{-1}ku$.
\end{example}

\begin{lemma}
\label{def_g_over_a}
Let $A$ be a normal ordered subgroupoid of the ordered groupoid $G$.  Then the relation
\[ g \simeq_A h \iff \; \text{there exist} \; a,b,c,d \in A \; \text{such that} \; agb \leq h \; \text{and} \; chd \leq g \]
is an equivalence relation on $G$.  The relation
\[ [g] \leq [k] \iff \; \text{there exist} \; a,b \in A \; \text{such that} \; agb \leq k  \]
is a well-defined partial order on the set $G \twobar A$ of equivalence classes of $\simeq_A$.
\end{lemma}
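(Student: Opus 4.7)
My plan is to verify the equivalence axioms for $\simeq_A$ first and then check that $\leq$ descends to a well-defined partial order on $G \twobar A$. Reflexivity of $\simeq_A$ is immediate from the wideness of $A$: taking $a = g\dom$ and $b = g\ran$, which lie in $A_0 = G_0 \subseteq A$, gives $agb = g \leq g$. Symmetry is built into the two-sided form of the definition.

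The genuinely delicate point is transitivity. Suppose $a_1gb_1 \leq h$ and $a_2hb_2 \leq k$; I would like to chain these via OG2 to obtain some $\alpha g\beta \leq k$ with $\alpha,\beta \in A$. The obstruction is that $(a_1gb_1)\dom = a_1\dom$ only satisfies $a_1\dom \leq h\dom = a_2\ran$, not equality, so the composition $a_2(a_1gb_1)$ need not be defined. The fix is to corestrict $a_2$ to the identity $e = a_1\dom$: since $A$ is closed under restrictions, $(e|a_2^{-1}) \in A$, hence so does its inverse $a_2' := (a_2|e)$; similarly, restricting $b_2$ to $f = b_1\ran$ produces $b_2' := (f|b_2) \in A$. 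Now $a_2'\ran = e = a_1\dom$ and $b_2'\dom = f = b_1\ran$, so OG2 applies and gives
\[
(a_2'a_1)\,g\,(b_1b_2') \;=\; a_2'(a_1gb_1)b_2' \;\leq\; a_2hb_2 \;\leq\; k,
\]
with $a_2'a_1, b_1b_2' \in A$ because $A$ is a subgroupoid. Running the symmetric argument on the $k \leq \cdots \leq g$ side completes transitivity.

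For the order on $G \twobar A$, I would reuse the same restriction device throughout. To show $\leq$ is well defined on classes, assume $agb \leq k$ and that $g \simeq_A g'$, $k \simeq_A k'$. From $g \simeq_A g'$ I pick $c_1,d_1 \in A$ with $c_1g'd_1 \leq g$ and from $k \simeq_A k'$ pick $a_2,b_2 \in A$ with $a_2kb_2 \leq k'$. Aligning $(c_1g'd_1)\dom$ with $g\dom$ by corestricting $a$ and $(c_1g'd_1)\ran$ with $g\ran$ by restricting $b$, and then again aligning the result with $k\dom,k\ran$ before multiplying by $a_2,b_2$, produces elements of $A$ that sandwich $g'$ below $k'$, exactly as required.

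Once well-definedness is in hand, reflexivity of $\leq$ on classes is the same trivial witness as for $\simeq_A$; transitivity is verbatim the restriction-trick calculation above; and antisymmetry is literally the definition of $\simeq_A$. The only real technical content — the place where the full force of the ordered-subgroupoid definition of $A$ is used — is the closure under restrictions invoked to manufacture $a_2',b_2'$; everything else is bookkeeping with OG2 and the subgroupoid axioms.
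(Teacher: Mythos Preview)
Your proof is correct and follows essentially the same route as the paper's: in both cases transitivity (and well-definedness of $\leq$) is obtained by corestricting the outer $A$--elements to the domain and range of the inner product so that OG2 applies, and the only structural property of $A$ invoked is closure under (co)restriction together with the subgroupoid axioms. The paper writes the resulting elements as $(u|aa^{-1})a$ and $b(b^{-1}b|v)$, which are exactly your $a_2'a_1$ and $b_1b_2'$ in different notation.
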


\begin{proof}
It is obvious that $\simeq_A$ is reflexive and symmetric.  Suppose that $g \simeq_A h$ and $h \simeq_A k$.
Then there exist $a,b,u,v \in A$ such that $agb \leq h$ and $uhv \leq k$.  Then $aa^{-1} \leq hh^{-1} = u^{-1}u$
and $b^{-1}b \leq h^{-1}h = vv^{-1}$, and 
$(u|aa^{-1})agb(b^{-1}b|v) \leq uhv \leq k$ with $(u|aa^{-1})a, b(b^{-1}b|v) \in A$.  Similarly there exist $p,q \in A$
with $pkq \leq g$ and $g \simeq_A k$.

We now show that $\leq$ is well-defined.  Suppose that $g \simeq_A g'$ and that $k \simeq_A k'$. Hence there
exist $p,q,u,v \in A$ with $pg'q \leq g, ukv \leq k'$.  Now if in addition
there exist $a,b \in A$ with $agb \leq k$ then $(u|aa^{-1})agb(b^{-1}b|v) \leq ukv \leq k'$ and
\[ ((u|aa^{-1})a|pp^{-1})pg'q(q^{-1}q|b(b^{-1}b|v)) \leq (u|aa^{-1})agb(b^{-1}bv).\]
Hence $g' \leq k'$.

Now it is clear that $\leq$ is reflexive and transitive, and by definition of $\simeq_A$ it is anti-symmetric. 
\end{proof} 

\begin{lemma}
\label{obj_g_over_a}
Let $e,f \in G_0$.  Then $e \simeq_A f$ if and only if there exist $a,p \in A$ such that
\[ aa^{-1} \leq e, a^{-1}a = f \; \text{and} \; pp^{-1} \leq f, p^{-1}p = e \,. \]
\end{lemma}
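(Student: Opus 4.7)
The plan is to unpack the definition of $\simeq_A$ from Lemma \ref{def_g_over_a} specialized to the case where the two elements being compared are identities $e, f \in G_0$. The key observation driving both directions is that in an ordered groupoid, any arrow that is $\leq$ an identity is itself an identity: if $g \leq h$ with $h \in G_0$, then $g \dom \leq h \dom = h$, so by the uniqueness clause of OG3, $g = (g \dom \mid h) = g \dom$. This fact will severely constrain the form of the elements witnessing $e \simeq_A f$.

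For the $(\Leftarrow)$ direction, suppose $a, p \in A$ satisfy $a a^{-1} \leq e$, $a^{-1}a = f$, $pp^{-1} \leq f$, and $p^{-1}p = e$. I would just exhibit the witnesses required by the definition of $\simeq_A$: since $p \ran = e$, the product $p \cdot e \cdot p^{-1}$ is defined in $G$ and equals $pp^{-1} = p \dom \leq f$; symmetrically, $a \cdot f \cdot a^{-1} = a a^{-1} \leq e$. With the four elements $p, p^{-1}, a, a^{-1} \in A$, this is exactly the form $\alpha e \beta \leq f$ and $\gamma f \delta \leq e$ required by Lemma \ref{def_g_over_a}.

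For the $(\Rightarrow)$ direction, suppose $e \simeq_A f$, so there exist $\alpha, \beta, \gamma, \delta \in A$ with $\alpha e \beta \leq f$ and $\gamma f \delta \leq e$. For $\alpha e \beta$ to be defined as a composition in $G$, we need $\alpha \ran = e = \beta \dom$, which reduces the product to $\alpha \beta$. Applying the observation above, $\alpha \beta \leq f$ forces $\alpha \beta$ to be an identity, namely $(\alpha\beta)\dom = \alpha \dom = \beta \ran$; hence $\beta = \alpha^{-1}$ by groupoid cancellation. Setting $p := \alpha$, we obtain $p \in A$ with $p^{-1}p = p \ran = e$ and $pp^{-1} = p \dom = \alpha \beta \leq f$. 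The symmetric argument applied to $\gamma f \delta \leq e$ yields $a := \gamma \in A$ with $a^{-1}a = f$ and $a a^{-1} \leq e$.

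I do not expect any real obstacle here: once one spots that the witnessing expressions $\alpha e \beta$ collapse to products of inverse pairs because of the identity-below-identity principle, both implications are immediate, and the lemma is effectively a direct computation from the definition of $\simeq_A$ specialized to $G_0$.
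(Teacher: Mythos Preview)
Your proof is correct. The paper actually states this lemma without proof, presumably regarding it as a routine unpacking of the definition of $\simeq_A$ for identities; your argument supplies exactly those details, and the key observation you isolate (that any arrow below an identity must itself be an identity, by uniqueness of restriction) is precisely what makes the computation go through.
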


A pair $(a,p)$ of arrows of $A$ realizing the relation $\simeq_A$ for $e,f \in G_0$ is called an $A$--{\em nexus}
between $e$ and $f$.

There is a natural ordered groupoid structure on $G \twobar A$ that we now begin to describe.

\begin{lemma}
Suppose that $g \simeq_A h$.  Then $g^{-1} \simeq_A h^{-1}$ and $gg^{-1} \simeq_A hh^{-1}$.
\end{lemma}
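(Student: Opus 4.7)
The plan is to unpack the hypothesis $g \simeq_A h$ into witnesses $a,b,c,d \in A$ with $agb \leq h$ and $chd \leq g$, and then derive each conclusion by combining [OG1], [OG2], and the facts that $A$ is a wide subgroupoid (hence contains all of $G_0$) and is closed under inversion.

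For $g^{-1} \simeq_A h^{-1}$, I would simply apply [OG1] to each witnessing inequality: $agb \leq h$ yields $b^{-1}g^{-1}a^{-1} \leq h^{-1}$, exhibiting the required relation with the pair $(b^{-1},a^{-1}) \in A \times A$, and symmetrically $chd \leq g$ gives $d^{-1}h^{-1}c^{-1} \leq g^{-1}$. This part is essentially immediate.

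For $gg^{-1} \simeq_A hh^{-1}$, the key observation is that composability of the triple product $agb$ in $G$ forces $a^{-1}a = gg^{-1}$ (and $g^{-1}g = bb^{-1}$), so the product $a \cdot gg^{-1} \cdot a^{-1}$ is defined in $G$ and collapses to $aa^{-1}$. Applying [OG1] to $agb \leq h$ and then [OG2] to multiply $agb$ with its inverse, one obtains
\[ a(gg^{-1})a^{-1} \;=\; (agb)(agb)^{-1} \;\leq\; hh^{-1}, \]
and since the left-hand side reduces to $aa^{-1}$, choosing $\alpha = a$ and $\beta = a^{-1}$ (both in $A$) witnesses $[gg^{-1}] \leq [hh^{-1}]$. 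The reverse direction is the symmetric computation from $chd \leq g$: composability forces $c^{-1}c = hh^{-1}$, whence $c(hh^{-1})c^{-1} = cc^{-1} \leq gg^{-1}$.

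There is no substantive obstacle here; the only thing to keep in mind is the groupoid bookkeeping that reduces $a(gg^{-1})a^{-1}$ to $aa^{-1}$ via the identifications $g\dom = gg^{-1}$ and $g\ran = g^{-1}g$. All the conceptual work was already done in setting up the normality axioms and in verifying well-definedness of $\simeq_A$ in Lemma~\ref{def_g_over_a}; this lemma is a straightforward consequence.
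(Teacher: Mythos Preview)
Your proof is correct and follows the same route as the paper. The only cosmetic difference is that the paper packages the second half as the observation that the pair $(u,a)$ (your $(c,a)$) is an $A$--nexus between $gg^{-1}$ and $hh^{-1}$, invoking Lemma~\ref{obj_g_over_a}, whereas you verify the two defining inequalities of $\simeq_A$ directly; the underlying computation---$a^{-1}a = gg^{-1}$ and $aa^{-1} \leq hh^{-1}$, and symmetrically for $c$---is identical.
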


\begin{proof}  Suppose that there exist $a,b,u,v \in A$ such that $agb \leq h$ and $uhv \leq g$.  Then
$b^{-1}g^{-1}a^{-1} \leq h^{-1}$ and $v^{-1}h^{-1}u^{-1} \leq g^{-1}$.  Furthermore,
$(u,a)$ is an $A$--nexus between $gg^{-1}$ and $hh^{-1}$.
\end{proof}

When we represent relationships in $G$ diagrammatically, we shall indicate the partial order by a dashed line with the order
decreasing down the page.

\newpage

\begin{lemma}
\label{nexus_comp}
Let $g,h \in G$ with $g^{-1}g \simeq_A hh^{-1}$.  Let $(a,p)$ be an $A$--nexus between $g^{-1}g$ and $hh^{-1}$.
Define
\begin{multicols}{2}
\begin{align*}
g' &= (g|aa^{-1}) \\
h' &= (pp^{-1}|h) \\
a' &= (a |pp^{-1}) \\
p' &= (p|aa^{-1}) \\
g'' &= (g| a'a'\,^{-1}) \\
h'' &= (p'p'\,^{-1}|h).
\end{align*}

\columnbreak

\[  \xymatrixcolsep{4pc}
\xymatrix{
\bullet \ar[r]^g \ar@{--}[d] & \bullet \ar@{--}[d] & \bullet \ar[r]^h \ar@{--}[d] & \bullet \ar@{--}[d]\\
\bullet \ar[r]^{g'} \ar@{--}[d] & \bullet \ar[ur]^(.2){a} \ar@{--}[d] & \bullet \ar[r]^{h'} \ar@{--}[d] \ar[ul]_(.2){p} |\hole & \bullet \ar@{--}[d]\\
\bullet \ar[r]^{g''}  & \bullet \ar[ur]^(.2){a'} & \bullet \ar[r]^{h''} \ar[ul]_(.2){p'} |\hole & \bullet \\
} \]
\end{multicols}
Then $g'ah \simeq_A gp^{-1}h'$, and if we make an alternative choice of nexus $(a_1,p_1)$ leading to elements
$g'_1,h'_1$ then
\[ g'_1a_1h \simeq_A g'ah \simeq_A gp^{-1}h' = gp_1^{-1}h'_1 \,. \] 
\end{lemma}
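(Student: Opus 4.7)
The strategy is to exhibit explicit witnesses in $A$ realising each of the two inequalities required for $g'ah \simeq_A gp^{-1}h'$; the main tool is the twisted conjugation (third) axiom of a normal ordered subgroupoid.

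For the inequality $\alpha(g'ah)\beta \leq gp^{-1}h'$ I take $\alpha$ to be the identity at $g'\dom$ (in $A$ by wideness) and $\beta = h^{-1}(a^{-1}p'^{-1})h''$. Membership $\beta \in A$ holds because $p' \in A$ (a corestriction of $p \in A$, placed in $A$ via the second normality axiom applied after inversion), so $a^{-1}p'^{-1}$ is a valid composition in the subgroupoid $A$; and the pair $h,h''$ shares the upper bound $h$, so the third axiom yields $\beta \in A$. A telescoping cancellation of $hh^{-1} = a\ran$ and then $aa^{-1} = g'\ran$ inside $g'ah\cdot\beta$ simplifies $\alpha(g'ah)\beta$ to $g'p'^{-1}h''$, and repeated use of OG2 along $g' \leq g$, $p'^{-1} \leq p^{-1}$, $h'' \leq h'$ gives $g'p'^{-1}h'' \leq gp^{-1}h'$.

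The opposite inequality $\gamma(gp^{-1}h')\delta \leq g'ah$ is handled by the mirror construction $\gamma = g''(a'p)g^{-1}$, $\delta$ the identity at $h'\ran$. Here $a' \in A$ (a corestriction of $a$), $a'p$ composes in $A$, and now $g''^{-1},g^{-1}$ share the upper bound $g^{-1}$, so the third axiom gives $\gamma \in A$. Cancelling $g^{-1}g$ and $pp^{-1}$ reduces $\gamma(gp^{-1}h')\delta$ to $g''a'h'$, which is $\leq g'ah$ by OG2. Together the two halves give $g'ah \simeq_A gp^{-1}h'$.

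For nexus-independence, the same two-sided argument applied to $(a_1,p_1)$ yields $g'_1 a_1 h \simeq_A gp_1^{-1}h'_1$. To bridge the two right-hand sides I verify $gp^{-1}h' \simeq_A gp_1^{-1}h'_1$ directly: the element $h'^{-1}(pp_1^{-1})h'_1$ lies in $A$ by the third axiom (since $pp_1^{-1} \in A$ and $h',h'_1 \leq h$ share the upper bound $h$), and right-multiplying $gp^{-1}h'$ by it gives $gp_1^{-1}h'_1$ via the cancellations $h'h'^{-1} = pp^{-1}$ and $p^{-1}p = g^{-1}g$; the reverse direction is symmetric. Transitivity of $\simeq_A$ then closes the asserted chain. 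The single delicate point throughout is the verification that $\beta,\gamma$ and their nexus-independence analogues lie in $A$: the auxiliary restrictions $g'',h'',a',p'$ introduced in the statement are arranged precisely so that the twisted conjugation axiom applies with matching upper-bounded pairs, after which everything reduces to routine cancellation in $G$ and monotonicity.
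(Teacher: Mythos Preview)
Your proof is correct and coincides with the paper's argument almost verbatim: the witnesses $\beta = h^{-1}a^{-1}p'^{-1}h''$ and $\gamma = g''a'pg^{-1}$ are exactly those used in the paper (which records the same two identities $g'p'^{-1}h'' = (g'ah)\beta$ and $g''a'h' = \gamma(gp^{-1}h')$ together with the corresponding order inequalities), and the appeal to the third normality axiom with upper bounds $h$ and $g^{-1}$ is identical. For nexus-independence the paper establishes $g'_1a_1h \simeq_A g'ah$ directly via the witness $g'_1a_1a^{-1}g'^{-1} \in A$ (and then says ``similarly'' for the $p$--side), whereas you prove the $p$--side bridge $gp^{-1}h' \simeq_A gp_1^{-1}h'_1$ and close up by transitivity; this is the mirror-image route and equally valid.
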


\begin{proof}
We have $g'ah \geq g''a'h' = g''a'pg^{-1}gp^{-1}h'$ with $g''a'pg^{-1} \in A$ by normality of $A$, and similarly
$gp^{-1}h' \geq g'p'\,^{-1}h'' = g'ahh^{-1}a^{-1}p'\,^{-1}h''$ with $h^{-1}a^{-1}p'\,^{-1}h'' \in A$.  

For the alternative choice of nexus we have $a^{-1}a=a_1^{-1}a_1=hh^{-1}$ and
$g'_1a_1h \simeq_A g'_1a_1a^{-1}g'~^{-1}g'ah$ with $g'_1a_1a^{-1}g'\,^{-1} \in A$.  Hence $g'_1a_1h \simeq_A g'ah$
and similarly $gp^{-1}h' \simeq_A gp_1^{-1}h'_1$.
\end{proof}

Given $g,h \in G$ with $g^{-1}g \simeq_A hh^{-1}$ we have now defined a unique class in $G \twobar A$ in terms of an 
$A$--nexus between $g^{-1}g$ and $hh^{-1}$, but independent of the $A$--nexus chosen.  We temporarily denote this class
by $g \between h \in G\twobar A$.  Hence in the notation of Lemma \ref{nexus_comp},
\[ g \between h = [g'ah] = [gp^{-1}h']. \]

\begin{lemma}
\label{nexus_comp_wd}
Suppose that $g \simeq_A g_1$ and that $h \simeq_A h_1$.  Then $g \between h = g_1 \between h_1$ in $G \twobar A$.
\end{lemma}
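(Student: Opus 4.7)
The plan is to reduce the bivariate claim to two univariate ones: (i) if $g \simeq_A g_1$ then $g \between h = g_1 \between h$; and (ii) if $h \simeq_A h_1$ then $g \between h = g \between h_1$. Once both are established, applying (i) and then (ii) with the intermediate element $g_1 \between h$ yields $g \between h = g_1 \between h = g_1 \between h_1$. Each intermediate $\between$ is well-defined because, by the previous lemma, $g \simeq_A g_1$ forces $g^{-1}g \simeq_A g_1^{-1}g_1$, so the nexus hypothesis $g^{-1}g \simeq_A hh^{-1}$ transports across $\simeq_A$ by transitivity (Lemma \ref{def_g_over_a}).

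For (i), I would fix an $A$-nexus $(a,p)$ between $g^{-1}g$ and $hh^{-1}$, so that $g \between h = [g'ah]$ with $g' = (g|aa^{-1})$. From the $\simeq_A$-witnesses $\alpha g \beta \leq g_1$ and $\gamma g_1 \delta \leq g$, with $\alpha,\beta,\gamma,\delta \in A$, one manufactures an $A$-nexus $(\beta',\alpha')$ between $g^{-1}g$ and $g_1^{-1}g_1$ by taking suitable restrictions (essentially repeating the argument that $g \simeq_A g_1$ implies $gg^{-1} \simeq_A g_1 g_1^{-1}$). Composing this nexus with $(a,p)$ along the lines of Lemma \ref{nexus_comp} produces an $A$-nexus $(a_1,p_1)$ between $g_1^{-1}g_1$ and $hh^{-1}$, giving $g_1 \between h = [g_1' a_1 h]$ with $g_1' = (g_1|a_1 a_1^{-1})$. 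The key step is then to exhibit elements of $A$ sandwiching $g'ah$ against $g_1' a_1 h$ in both directions; these are assembled from $\alpha,\beta,\gamma,\delta$ together with appropriate restrictions of $a,p,a_1,p_1$, with each membership in $A$ justified either by the restriction-stability clause of normality or by the sandwich clause (the latter requiring one to exhibit a common upper bound before invoking closure under $h^{-1}ak$).

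For (ii), I would appeal to inversion. By the previous lemma, $\simeq_A$ commutes with inversion, and from the two presentations of $g \between h$ in Lemma \ref{nexus_comp} one reads off $(g \between h)^{-1} = h^{-1} \between g^{-1}$, since $(gp^{-1}h')^{-1} = h'^{\,-1} p g^{-1}$ and the pair $(p,a)$ is precisely the nexus for $(h^{-1},g^{-1})$ corresponding to $(a,p)$ for $(g,h)$. Thus (ii) is the inverted form of (i) applied to $h^{-1} \simeq_A h_1^{-1}$ with $g^{-1}$ held fixed.

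The principal obstacle is bookkeeping: each step of the nexus composition in Lemma \ref{nexus_comp} creates a thicket of restrictions, and verifying that the sandwich elements genuinely lie in $A$ requires locating a common upper bound for each conjugating pair before the normality axiom can be applied. The calculation is elementary but combinatorially delicate, and it must be carried out in both directions to establish the equivalence $g'ah \simeq_A g_1' a_1 h$.
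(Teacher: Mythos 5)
Your decomposition into two one-variable statements is exactly the paper's, and your treatment of the second variable by inversion is correct and genuinely tidier than the paper's ``similarly'': since $(p,a)$ is an $A$--nexus between $hh^{-1}$ and $g^{-1}g$ whenever $(a,p)$ is one between $g^{-1}g$ and $hh^{-1}$, and $(gp^{-1}h')^{-1}=h'^{-1}pg^{-1}$, one does get $(g\between h)^{-1}=h^{-1}\between g^{-1}$, and compatibility of $\simeq_A$ with inversion then reduces (ii) to (i). The problem is (i) itself, which is where the lemma actually lives: your proposal ends precisely at the point where the proof has to begin. You say the key step is ``to exhibit elements of $A$ sandwiching $g'ah$ against $g_1'a_1h$ in both directions,'' assembled from the witnesses and ``appropriate restrictions,'' and you acknowledge the computation is delicate --- but no such elements are exhibited, no common upper bounds are located for the normality clause, and no membership in $A$ is verified. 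Since the entire content of the lemma is that verification, this is a genuine gap rather than deferred bookkeeping; as written, the argument for (i) is a statement of the goal, not a proof of it.

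To close the gap you do not in fact need a two-sided sandwich at all, and this is the paper's trick. Fix an $A$--nexus $(a_1,p_1)$ between $g_1^{-1}g_1$ and $hh^{-1}$, so $g_1\between h=[g_1'a_1h]$ with $g_1'=(g_1|a_1a_1^{-1})$, and take a witness $ug_1v\leq g$ with $u,v\in A$. Put $u'=(u|g_1'g_1'\,^{-1})$ and $v'=(a_1a_1^{-1}|v)$. Then $v'\,^{-1}a_1\in A$, its domain lies below $g^{-1}g=g\ran$ and its range is $hh^{-1}$, so it is a component of an $A$--nexus between $g^{-1}g$ and $hh^{-1}$; by the independence-of-nexus statement in Lemma \ref{nexus_comp} it may be used to compute $g\between h$. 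Uniqueness of (co)restriction gives $(g|v'\,^{-1}v')=u'g_1'v'$, so the resulting representative of $g\between h$ is $u'g_1'v'\cdot v'\,^{-1}a_1\cdot h=u'g_1'a_1h$, which is a left $A$--translate of $g_1'a_1h$ and hence trivially in the same $\simeq_A$--class; thus $g\between h=[g_1'a_1h]=g_1\between h$ with no further normality computation. In other words, Lemma \ref{nexus_comp} is used not merely to legitimize a transported nexus (as in your sketch) but to choose which of the two classes to recompute, so that its canonical representative differs from the other's by a single element of $A$. Incorporating this, your part (i) becomes a short argument, and your inversion reduction then finishes (ii).
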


\begin{proof}
Fix $h$ and choose an $A$--nexus $(a_1,p_1)$ between $g_1^{-1}g_1$ and $hh^{-1}$.  Let $g'_1 = (g_1|a_1a_1^{-1})$
and $h'_1 = (p_1p_1^{-1}|h)$.  Then
$g_1 \between h = [g'_1a_1h]=[g_1p_1^{-1}h'_1] \in G \twobar A$. There exist $u,v \in A$ with $ug_1v \leq g$.
Let $u' = (u|g'_1g'_1 \,^{-1})$ and $v' = (a_1a_1^{-1}|v)$.  
\[ \xymatrixcolsep{4pc}
\xymatrix{
\bullet \ar[rrr]^{g} \ar@{--}[ddd] &&& \bullet \ar@{--}[dddd] && \\
\bullet \ar[r]_{u} & \bullet \ar[r]^{g_1} \ar@{--}[dd] & \bullet \ar[r]_{v} \ar@{--}[dd] & \bullet & & \\
&&&& \bullet \ar[r]^{h} \ar@{--}[d] & \bullet \\
\bullet \ar[r]_{u'} & \bullet \ar[r]^{g'_1} & \bullet \ar[rd]_{v'} \ar[rru]_(.2){a_1} & & \bullet \ar[r]^{h'_1} \ar[lluu]^(.2){p_1} |!{[ll];[u]}\hole & \bullet \\
&&& \bullet &&
}
\]
Now $v'\,^{-1}a_1$ is a component of an $A$--nexus
between $g^{-1}g$ and $hh^{-1}$ and so can be used to define $g \between h$.  By uniqueness of restriction we have
$(g|v'\,^{-1}v') = u'g'_1v'$ and so 
\begin{align*}
g \between h &= [ u'g'_1v'v'\,^{-1}a_1h] \\
&= [u'g'_1a_1h] \\
&= [g'_1a_1h] = g_1 \between h.
\end{align*}
Similarly $g_1 \between h = g_1 \between h_1$.
\end{proof}

We now have a well-defined composition of $\simeq_A$--classes:
$[g][h] = g \between h$.  Suppose that $g^{-1}g \simeq_A hh^{-1}$ and $h^{-1}h \simeq_A kk^{-1}$.  Choose
$A$--nexuses $(a,p)$ between $g^{-1}g$ and $hh^{-1}$ and $(b,q)$ between $h^{-1}h$ and $kk^{-1}$.  Then
$g \between h = [g'ah]$ and $g'ah \between k = [g'ahq^{-1}k']$.  But 
$h \between k = [hq^{-1}k']$ and $g \between hq^{-1}k' = [g'ahq^{-1}k']$, and so the composition is associative.
This last observation then establishes the following result.

\begin{prop}
\label{quotisgpd}
$G \twobar A$ is a groupoid under the operation of composition of $\simeq_A$--classes.
\end{prop}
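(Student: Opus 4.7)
The plan is to complete the proof by addressing the groupoid axioms that remain. Associativity of $\between$ has just been verified in the paragraph preceding the statement, well-definedness of composition is Lemma \ref{nexus_comp_wd}, and the earlier lemma shows that $g \simeq_A h$ implies $g^{-1} \simeq_A h^{-1}$ and $gg^{-1} \simeq_A hh^{-1}$, so the assignments $[g] \mapsto [g^{-1}]$, $[g] \mapsto [gg^{-1}]$, $[g] \mapsto [g^{-1}g]$ descend to the quotient. What is left is (i) to verify that the classes $[e]$ for $e \in G_0$ act as identities under $\between$, and (ii) to verify that $[g^{-1}]$ is a two-sided inverse of $[g]$ in $G \twobar A$.

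For (i), I would compute $[gg^{-1}] \between [g]$ directly. The composability condition $gg^{-1} \simeq_A gg^{-1}$ holds trivially, and the pair $(gg^{-1}, gg^{-1})$ is an $A$-nexus because $A$ is wide in $G$, so both identities lie in $A$. Applying Lemma \ref{nexus_comp} with $a = p = gg^{-1}$, the restrictions $g'$ and $h'$ collapse to $gg^{-1}$ and $g$ respectively, so $g'ah = gg^{-1}\cdot gg^{-1}\cdot g = g$ and therefore $[gg^{-1}] \between [g] = [g]$. A symmetric computation gives $[g] \between [g^{-1}g] = [g]$, so $[gg^{-1}]$ and $[g^{-1}g]$ serve as the source and range of $[g]$, and the identities of $G \twobar A$ are precisely the classes of elements of $G_0$.

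For (ii), the composition $[g] \between [g^{-1}]$ is defined because $g^{-1}g \simeq_A g^{-1}g$; feeding the trivial nexus $(g^{-1}g, g^{-1}g)$ into Lemma \ref{nexus_comp} gives $[g] \between [g^{-1}] = [g \cdot g^{-1}g \cdot g^{-1}] = [gg^{-1}]$, which is an identity by (i). A symmetric computation gives $[g^{-1}] \between [g] = [g^{-1}g]$. Together with the already-established associativity and well-definedness, this establishes the groupoid axioms. I do not anticipate a serious obstacle: each verification reduces to plugging an identity nexus into the formula of Lemma \ref{nexus_comp} and observing that the various restrictions trivialize, so the nexus-based composition collapses to the obvious answer.
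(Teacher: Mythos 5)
Your proposal is correct and follows the paper's own route: the paper records well-definedness (Lemma \ref{nexus_comp_wd}) and associativity in the paragraph preceding the proposition and then simply asserts the groupoid structure, leaving the identity and inverse checks implicit. Your computations with the trivial nexus $(gg^{-1},gg^{-1})$ (resp. $(g^{-1}g,g^{-1}g)$), giving $[gg^{-1}]\between[g]=[g]$, $[g]\between[g^{-1}]=[gg^{-1}]$, etc., are exactly the routine verifications the authors omit, so the two arguments agree in substance.
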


\begin{lemma}
\label{nexus_lma}
Let $h \in G$ and suppose that $(a,p)$ is an $A$--nexus between $e \in G_0$ and $h \dom$.  Then for  $h' = (p \dom|h)$,
we have $[h']=[h]$ in $G \twobar A$.
\end{lemma}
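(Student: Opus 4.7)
The plan is to establish $[h]=[h']$ by verifying both $[h']\leq[h]$ and $[h]\leq[h']$ in $G\twobar A$.  The first inequality is immediate: since $h' = (p\dom|h)\leq h$, sandwiching by the identities $h'\dom, h'\ran \in A$ gives $h'\dom\cdot h'\cdot h'\ran = h' \leq h$, so $[h']\leq[h]$.  The substantive task is to exhibit $\alpha,\beta\in A$ with $\alpha h\beta\leq h'$.

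For $\alpha$, I would combine both halves of the nexus.  Since $aa^{-1}\leq e = p\ran$, the corestriction $(p|aa^{-1})$ is defined in $G$.  Writing it as $(aa^{-1}|p^{-1})^{-1}$ and applying the second normality axiom to $p^{-1}\in A$ (valid because $aa^{-1}\leq p^{-1}p = e$) places $(p|aa^{-1})$ in $A$.  The composition $\alpha := (p|aa^{-1})\cdot a$ is then defined, lies in $A$, and satisfies $\alpha\ran = a\ran = h\dom$ while $\alpha\dom\leq p\dom = pp^{-1}$.  I would set $e' := \alpha\dom$ and $h_{0} := (e'|h)$; because $e'\leq pp^{-1} = h'\dom$ and $h'\leq h$, the restriction $(e'|h')$ is an element of $G$ below $h$ with domain $e'$, and uniqueness of restriction forces $(e'|h') = h_{0}$, so in particular $h_{0}\leq h'$.

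To produce $\beta$ I would invoke the third normality axiom applied to $\alpha\in A$ and the pair $h_{0}, h$, which share the upper bound $h$ (since $h_{0}\leq h$).  The composition $h_{0}^{-1}\alpha h$ is defined because $h_{0}^{-1}\ran = e' = \alpha\dom$ and $\alpha\ran = h\dom$, so normality yields $h_{0}^{-1}\alpha h \in A$, and hence $\beta := (h_{0}^{-1}\alpha h)^{-1} = h^{-1}\alpha^{-1}h_{0}\in A$.  A direct cancellation then gives
\[ \alpha h\beta \;=\; \alpha h \cdot h^{-1}\alpha^{-1}h_{0} \;=\; \alpha(hh^{-1})\alpha^{-1}h_{0} \;=\; (\alpha\alpha^{-1})h_{0} \;=\; e'\,h_{0} \;=\; h_{0} \;\leq\; h', \]
so $[h]\leq[h']$, and combined with the earlier inequality we conclude $[h] = [h']$.

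The main obstacle is identifying the correct form of $\alpha$.  What is required is an $A$-arrow whose domain lies below $pp^{-1}$ and whose range is exactly $h\dom$, and the pseudoproduct-style combination $(p|aa^{-1})\cdot a$ is the natural way to manufacture such a bridge out of the two halves of the nexus; neither $p$ nor $a$ alone suffices.  Once $\alpha$ is in hand, $\beta$ is essentially forced by the cancellation of $h$ on the right, and it is precisely the broadened third normality axiom (with $h_{0}\leq h$ supplying the common upper bound required of the "conjugating" arrows) that guarantees $\beta \in A$.
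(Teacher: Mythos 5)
Your proof is correct and follows essentially the same route as the paper: your $\alpha=(p|aa^{-1})\,a$ and $h_0=(\alpha\dom|h)$ are exactly the paper's $p'a$ and $h''$, and your normality element $h^{-1}\alpha^{-1}h_0$ is the paper's $h^{-1}a^{-1}p'^{-1}h''$, obtained from the same application of the third axiom with common upper bound $h$. The only (harmless) difference is presentational: you exhibit the sandwich $\alpha h\beta=h_0\leq h'$ directly against the definition of the order, whereas the paper passes through a short chain of $\simeq_A$-equivalences, and you spell out explicitly (via the second axiom and inverse-closure) why $(p|aa^{-1})\in A$, which the paper leaves implicit.
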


\begin{proof}
Since $h' \leq h$ in $G$ we have $[h'] \leq [h]$.  Now set $p'=(p|a \dom)$ and $h''=(p' \dom |h)$.  Then
\[ h \simeq_A h(h^{-1}a^{-1}p'^{-1}h'') = a^{-1}p'^{-1}h'' \simeq_A h'' \leq h' \]
and so $[h] \leq [h']$.
\end{proof}

We shall now proceed to verify that the partial order on $\simeq_A$--classes given in Lemma \ref{def_g_over_a} makes
$G \twobar A$ into an ordered groupoid.

\begin{lemma}
\label{og1}
If $[g] \leq [h]$ in $G \twobar A$ then $[g]^{-1}=[g^{-1}] \leq [h^{-1}]=[h]^{-1}$.
\end{lemma}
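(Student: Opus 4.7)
The plan is to treat this as essentially a direct transfer of axiom OG1 from $G$ to the quotient $G\twobar A$, once we confirm that $[g^{-1}]$ really is the groupoid inverse of $[g]$ in $G\twobar A$. Since the composition on the quotient is defined via $A$--nexuses rather than by simply ``descending'' the composition of $G$, the identification $[g]^{-1}=[g^{-1}]$ deserves a brief sanity check, but after that the ordering claim is a one-line application of OG1 together with the fact that $A$, being a subgroupoid, is closed under inversion.

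First I would verify $[g]^{-1}=[g^{-1}]$. Since $g\ran = g^{-1}g = g^{-1}\dom$, the trivial pair $(a,p)=(g^{-1}g,g^{-1}g)$ is an $A$--nexus between $g\ran$ and $g^{-1}\dom$ (identities always lie in $A$ because $A$ is wide in $G$). Plugging this nexus into the recipe of Lemma \ref{nexus_comp} gives $g'=(g\,|\,g^{-1}g)=g$, whence
\[
[g][g^{-1}] \;=\; g\between g^{-1} \;=\; [\,g\cdot (g^{-1}g)\cdot g^{-1}\,] \;=\; [gg^{-1}],
\]
and this is an identity of $G\twobar A$. The symmetric computation gives $[g^{-1}][g]=[g^{-1}g]$, confirming $[g]^{-1}=[g^{-1}]$, and likewise $[h]^{-1}=[h^{-1}]$.

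For the ordering itself, unfold the hypothesis $[g]\leq [h]$: there exist $a,b\in A$ with $agb\leq h$ in $G$. Applying OG1 in the ordered groupoid $G$ gives $b^{-1}g^{-1}a^{-1}\leq h^{-1}$. Since $A$ is a subgroupoid of $G$, both $a^{-1}$ and $b^{-1}$ lie in $A$, so the definition of $\leq$ on $G\twobar A$ given in Lemma \ref{def_g_over_a} immediately yields $[g^{-1}]\leq [h^{-1}]$. Combining with the previous paragraph gives $[g]^{-1}=[g^{-1}]\leq [h^{-1}]=[h]^{-1}$, as required.

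The only part with any content is the identification of the inverse on classes, and even that is routine once the trivial nexus is chosen; the OG1 step is automatic. No real obstacle is anticipated, and the proof should occupy only a few lines.
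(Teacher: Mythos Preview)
Your proof is correct and the ordering step is exactly the paper's own argument: from $agb\leq h$ apply OG1 in $G$ to get $b^{-1}g^{-1}a^{-1}\leq h^{-1}$, then read off $[g^{-1}]\leq[h^{-1}]$. The paper leaves the identification $[g]^{-1}=[g^{-1}]$ implicit (it is tacit in Proposition~\ref{quotisgpd}), so your explicit verification via the trivial nexus is a harmless and arguably welcome addition, not a departure in approach.
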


\begin{proof}
We have $agb \leq h$ for some $a,b \in A$ and so $b^{-1}g^{-1}a^{-1} \leq h^{-1}$.
\end{proof}

\begin{lemma}
\label{og2}
Suppose that $[g_1] \leq [h_1]$ and that $[g_2] \leq [h_2]$ in $G \twobar A$ and that the compositions $[g_1][g_2]$ and
$[h_1][h_2]$ exist.  Then $[g_1][g_2] \leq [h_1][h_2]$.
\end{lemma}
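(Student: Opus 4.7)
The plan is to reduce the inequality in $G\twobar A$ to a pointwise inequality in $G$ by choosing suitable representatives of the composite classes. First, from the hypothesis $[g_i]\leq[h_i]$, I extract $a_i,b_i\in A$ with $a_ig_ib_i\leq h_i$ in $G$. Since $g_i\simeq_A a_ig_ib_i$, substituting $g_i\leftarrow a_ig_ib_i$ preserves the classes $[g_i]$ and the hypothesis that $[g_1][g_2]$ exists; so I may assume $g_i\leq h_i$ in $G$ throughout.

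Fix an $A$-nexus $(\beta,\sigma)$ realising $h_1\ran\simeq_A h_2\dom$. By Lemma~\ref{nexus_comp} the class $[g_1][g_2]$ is independent of the $A$-nexus chosen, so I am free to select a nexus for $(g_1\ran,g_2\dom)$ built from $(\beta,\sigma)$. Since $g_1\ran\leq h_1\ran=\sigma\ran$ and $g_2\dom\leq h_2\dom=\beta\ran$, the corestrictions $\alpha:=(\beta\,|\,g_2\dom)$ and $\pi:=(\sigma\,|\,g_1\ran)$ exist in $G$ and lie in $A$ by the second normality axiom (applied to $\beta^{-1}$ and $\sigma^{-1}$). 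By construction $\alpha\leq\beta$, $\pi\leq\sigma$, $\alpha\ran=g_2\dom$, and $\pi\ran=g_1\ran$. The crucial technical claim is that $(\alpha,\pi)$ is itself an $A$-nexus for $(g_1\ran,g_2\dom)$, equivalently that $\alpha\dom\leq g_1\ran$ and $\pi\dom\leq g_2\dom$.

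Granting this claim, I use the alternative representative from Lemma~\ref{nexus_comp}: $[g_1][g_2]=[g_1\pi^{-1}g''_2]$ with $g''_2=(\pi\pi^{-1}\,|\,g_2)$, and $[h_1][h_2]=[h_1\sigma^{-1}h''_2]$ with $h''_2=(\sigma\sigma^{-1}\,|\,h_2)$. Since $\pi\leq\sigma$, axiom [OG1] gives $\pi^{-1}\leq\sigma^{-1}$ and so $\pi\pi^{-1}\leq\sigma\sigma^{-1}$. Together with $g_2\leq h_2$ and uniqueness of restriction, this yields $g''_2=(\pi\pi^{-1}\,|\,h_2)\leq(\sigma\sigma^{-1}\,|\,h_2)=h''_2$. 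Combined with $g_1\leq h_1$, two applications of [OG2] then give $g_1\pi^{-1}g''_2\leq h_1\sigma^{-1}h''_2$ in $G$. Choosing $c=g_1\dom$ and $d=g''_2\ran$ as identities in $A$ exhibits $[g_1][g_2]\leq[h_1][h_2]$ in $G\twobar A$.

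The main obstacle I anticipate is the crucial claim itself: that the corestrictions of $\beta$ and $\sigma$ yield a valid $A$-nexus for $(g_1\ran,g_2\dom)$. The domain inequalities $\alpha\dom\leq g_1\ran$ and $\pi\dom\leq g_2\dom$ do not follow mechanically from the second normality axiom; one must combine the hypothesis that $[g_1][g_2]$ exists (so that some $A$-nexus for $(g_1\ran,g_2\dom)$ is already available) with a careful use of the third normality axiom—closure under conjugation by elements with a common upper bound in $G$—applied to the existing nexus together with $\beta$ and $\sigma$, to force the required compatibility. I expect this step to be the most delicate part of the proof.
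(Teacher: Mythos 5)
Your reduction to $g_i\leq h_i$ and your closing computation (uniqueness of restriction giving $g''_2\leq h''_2$, then [OG1]/[OG2]) match the paper, but the argument hinges entirely on the ``crucial technical claim'' that the corestrictions $\alpha=(\beta\,|\,g_2\dom)$ and $\pi=(\sigma\,|\,g_1\ran)$ of the $h$--nexus form an $A$--nexus between $g_1\ran$ and $g_2\dom$, i.e.\ that $\alpha\dom\leq g_1\ran$ and $\pi\dom\leq g_2\dom$. You leave this unproved, and it does not follow from the hypotheses: corestricting $\beta$ to $g_2\dom$ only yields $\alpha\dom\leq\beta\dom\leq h_1\ran$, and nothing forces $\alpha\dom$ to lie below $g_1\ran$ (the two identities are merely both below $h_1\ran$ and may be incomparable); the existence of \emph{some} nexus between $g_1\ran$ and $g_2\dom$ does not make this particular corestricted pair one. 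So as written there is a genuine gap, and the fix you gesture at (``careful use of the third normality axiom'' to force compatibility) is not the right move, because the claim itself is not needed.

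The paper's proof sidesteps the issue: keep the nexus $(b,q)$ between $g_1\ran$ and $g_2\dom$ that exists because $[g_1][g_2]$ is defined, so $[g_1][g_2]=[g_1q^{-1}g'_2]$ with $g'_2=(qq^{-1}|g_2)$, and corestrict only the $h$--nexus component $p$ (your $\sigma$) to $g_1\ran$, getting $p'=(p|g_1\ran)\in A$ and $h''_2=(p'p'{}^{-1}|h'_2)$. The two representatives are then bridged by right multiplication by an element of $A$: the element $g'_2{}^{-1}(qp'{}^{-1})h''_2$ lies in $A$ by the third normality axiom, since $qp'{}^{-1}\in A$ and $g'_2,h''_2$ have the common upper bound $h_2$; multiplying $g_1q^{-1}g'_2$ by it gives $g_1p'{}^{-1}h''_2$, which is $\leq h_1p^{-1}h'_2$ by [OG1]/[OG2], whence $[g_1][g_2]\leq[h_1][h_2]$. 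In short: instead of trying to replace the $g$--nexus by a corestriction of the $h$--nexus, change the \emph{representative} of $[g_1][g_2]$ within its class using normality. Your proof becomes correct once you substitute this bridging step for the unproved claim.
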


\begin{proof}
There exist $a_1,b_1,a_2,b_2 \in A$ such that $a_1g_1b_1 \leq h_1$ and $a_2g_2b_2 \leq h_2$.  Since $[g_i]=[a_ig_ib_i]$ we
may as well assume that $g_i \leq h_i$ for $i=1,2$.  We have $A$--nexuses $(a,p)$ between $h_1^{-1}h_1$ and $h_2h_2^{-1}$
and $(b,q)$ between $g_1^{-1}g_1$ and $g_2g_2^{-1}$.  We set $p' = (p|g_1^{-1}g_1)$ and
$h''_2 = (p'p' \,^{-1}|h'_2)$.  Then
\[ g_1q^{-1}g'_2  \simeq_A g_1q^{-1}g'_2g'_2\,^{-1}qp'\,^{-1}h''_2 = g_1p'h''_2 \]
since $g'_2,h''_2 \leq h_2$ and so $g'_2\,^{-1}qp'\,^{-1}h''_2 \in A$, and
\[ g_1p'\,^{-1}h''_2 \leq h_1p^{-1}h'_2.\]
Therefore $[g_1][g_2] = [g_1q^{-1}g'_2] \leq [h_1p^{-1}h'_2] = [h_1][h_2]$.
\[ \xymatrixcolsep{3pc}
\xymatrix{
& \bullet \ar[rr]^{h_1} \ar@{--}[ddl] && \bullet  \ar@{--}[ddl] \ar@{--}[d] & \bullet \ar[rr]^{h_2} \ar@{--}[d] \ar@{--}[dddr] && \bullet \ar@{--}[d] \ar@{--}[dddr] & \\
&  && \bullet \ar[ur]_(.2){a} & \bullet \ar[rr]^(.6){h'_2} \ar[ul]_(.2){p} |\hole \ar@{--}[dl] && \bullet \ar@{--}[d] & \\
\bullet \ar[rr]_{g_1} &  & \bullet \ar@{--}[dd] & \bullet \ar[l]_(.3){p'} \ar[rrr]_(.8){h''_2} &&& \bullet & \\
&&&&& \bullet \ar[rr]_{g_2} \ar@{--}[d]&& \bullet \ar@{--}[d] \\
&& \bullet \ar[urrr]_(.2){b} &&& \bullet \ar[rr]_{g'_2} \ar@/^/[llluu]_(.2){q} |!{[lll];[u]}\hole && \bullet 
}
\]
\end{proof}

\begin{lemma}
\label{og3}
Suppose that $[e] \leq [gg^{-1}]$.  Then there exists a unique arrow $[k] \in G \twobar A$ such that $[k] \leq [g]$ and
$[k][k]^{-1}=[e]$.
\end{lemma}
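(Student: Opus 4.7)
The plan is to handle existence by extracting a concrete representative from $[e] \leq [gg^{-1}]$, and to handle uniqueness by reducing any two candidates to a common representative using normality of $A$.

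For existence, I will unpack the hypothesis to produce a single arrow $a \in A$ with $a^{-1}a = e$ and $aa^{-1} \leq gg^{-1}$. By definition there exist $a, b \in A$ with $aeb \leq gg^{-1}$; since $gg^{-1}$ is an identity so is $aeb$, and since $e$ is an identity this forces $b = a^{-1}$ and $aa^{-1} \leq gg^{-1}$. I then set $k := (aa^{-1} \mid g)$, the ordinary restriction of $g$ in $G$. Because $k \leq g$ the inequality $[k] \leq [g]$ is immediate, while Lemma \ref{nexus_comp} with the trivial nexus gives $[k][k]^{-1} = [kk^{-1}] = [aa^{-1}]$, and Lemma \ref{obj_g_over_a} applied to the $A$-nexus $(a, a^{-1})$ yields $[aa^{-1}] = [e]$.

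For uniqueness, let $[k']$ also satisfy the two properties. Since $[k'] \leq [g]$ there exist $u, v \in A$ with $uk'v \leq g$, and a routine check shows $uk'v \simeq_A k'$; so I replace $k'$ by this representative and assume $k' \leq g$, which lets me write $k' = (f' \mid g)$ with $f' := k'\dom$ by uniqueness of restriction in $G$. The condition $[k'][k']^{-1} = [e]$ forces $f' \simeq_A e$. Setting $f := aa^{-1}$ and combining with $f \simeq_A e$, I obtain an $A$-nexus $(c, r)$ between $f$ and $f'$, and uniqueness reduces to the general claim that $(f \mid g) \simeq_A (f' \mid g)$ whenever $f, f' \leq g\dom$ are identities of $G$ linked by such a nexus.

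The crux is to produce $A$-elements witnessing this last equivalence, and this is exactly what the third clause of normality is designed to supply. I will form the ``transported'' elements
\[ \tilde c := (cc^{-1} \mid g)^{-1} \, c \, (f' \mid g), \qquad \tilde r := (rr^{-1} \mid g)^{-1} \, r \, (f \mid g) \]
in $G$. Each composition is defined, and since $(cc^{-1} \mid g), (f' \mid g), (rr^{-1} \mid g), (f \mid g)$ all sit below $g$ and $c, r \in A$, normality puts $\tilde c, \tilde r \in A$. A direct calculation then collapses to
\[ c \cdot (f' \mid g) \cdot \tilde c^{-1} \; = \; (cc^{-1} \mid g) \; \leq \; (f \mid g), \]
using $(f' \mid g)(f' \mid g)^{-1} = f'$, $c f' c^{-1} = cc^{-1}$, and $cc^{-1} \leq f$. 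Since $c, \tilde c^{-1} \in A$, this witnesses $[(f' \mid g)] \leq [(f \mid g)]$; the symmetric computation with $\tilde r$ gives the reverse inequality, and hence $[k'] = [(f' \mid g)] = [(f \mid g)] = [k]$.

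The main obstacle is spotting the witnesses $\tilde c$ and $\tilde r$; the broadened definition of normal ordered subgroupoid in this paper (more general than Matthews') was written precisely to manufacture such $h^{-1}ak$ elements with $h, k \leq g$, and without it the argument would stall at the crucial step.
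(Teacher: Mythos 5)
Your proof is correct, and its skeleton matches the paper's: existence is obtained exactly as in the paper by extracting $a\in A$ with $a^{-1}a=e$, $aa^{-1}\leq gg^{-1}$ and taking $k=(aa^{-1}|g)$, and uniqueness is proved by establishing the two inequalities $[k']\leq[k]$ and $[k]\leq[k']$ and appealing to antisymmetry of the order from Lemma \ref{def_g_over_a}. Where you differ is in the bookkeeping of the uniqueness half: the paper keeps $ukv\leq g$ together with a nexus $(b,q)$ between $kk^{-1}$ and $e$ and works through the auxiliary restrictions $k',u',v',a',g''$ in two $\simeq_A$-chains, whereas you first normalize the competitor to a representative $k'\leq g$, write $k'=(f'|g)$ with $f'\simeq_A aa^{-1}$, and then compare the two restrictions of the same $g$ directly via the transported witnesses $\tilde c=(cc^{-1}|g)^{-1}c\,(f'|g)$ and $\tilde r=(rr^{-1}|g)^{-1}r\,(f|g)$, which the third clause of normality places in $A$ precisely because all four restricted arrows lie below the single element $g$. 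I checked the computations: $c\,(f'|g)\,\tilde c^{-1}=(cc^{-1}|g)\leq(f|g)$ and its mirror image do hold (monotonicity of restriction in the identity gives the final inequalities), so both directions are witnessed by elements of $A$ and antisymmetry finishes the argument; the reduction $[uk'v]=[k']$ and the identities $[k][k]^{-1}=[kk^{-1}]$, $[aa^{-1}]=[e]$ you use are all legitimate by Lemmas \ref{nexus_comp} and \ref{obj_g_over_a}. The gain of your version is that it isolates a clean reusable statement -- restrictions of a fixed $g$ to $\simeq_A$-equivalent identities below $g\dom$ are themselves $\simeq_A$-equivalent -- and makes the role of the broadened normality condition transparent, at the price of nothing beyond what the paper's own proof already uses.
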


\begin{proof}
Since $[e] \leq [gg^{-1}]$ there exists $a \in A$ with $aa^{-1} \leq gg^{-1}$ and $a^{-1}a=e$.  Let $g' = (aa^{-1}|g)$.
Then $[g'] \leq [g]$ and $[g'][g']^{-1} = [aa^{-1}]=[e]$.

Now suppose that $[k] \leq [g]$ and $[k][k]^{-1}=[e]$.  We shall show that $[k]=[g']$. There exist $u,v \in A$ such that
$ukv \leq g$ and an $A$--nexus $(b,q)$ between $kk^{-1}$ and $e$, so that
\[ bb^{-1} \leq kk^{-1} \,, b^{-1}b=e \,, qq^{-1} \leq e \,, q^{-1}q=kk^{-1} \,. \]
Let $k' = (bb^{-1}|k), u' = (u|bb^{-1})$ and $v' = (k'\,^{-1}k'|v)$.
Then $u'k'v' \leq ukv \leq g$ and 
\begin{align*}
g' & \simeq_A g' g'\,^{-1}ab^{-1}u'\, ^{-1}u'k'v' \\
\intertext{since $g' \leq g, u'k'v' \leq g$, and $ab^{-1}u' \,^{-1} \in A$,}
&= ab^{-1}k'v' \simeq_A k' \leq k.
\end{align*}
Let $a' = (a|qq^{-1})$ and $g'' = (a'a' \,^{-1}|g')$. 
\[ \xymatrixcolsep{3pc}
\xymatrix{
& \bullet \ar[rrr]^{g} \ar@{--}[ddddl] \ar@{--}[d] &&& \bullet \ar@{--}[d] \ar@{--}[ddddr] && \\
& \bullet \ar[rrr]^{g'} \ar[dr]^{a} \ar@{--}[dd] &&& \bullet \ar@{--}[d] && \\
&& \stackrel{e}{\bullet} \ar@{--}[d] && \bullet && \\
& \bullet \ar[r]_{a'} \ar@/^/[urrr]_(.8){g''} |(.42)\hole & \bullet \ar[d]_{q} &&&& \\
\bullet \ar[rr]_{u} \ar@{--}[dr] & & \bullet \ar[rr]_{k} |(.4)\hole \ar@{--}[dr] & & \bullet \ar[r]_{v} \ar@{--}[dr] & \bullet \ar@{--}[dr] & \\ 
& \bullet \ar[rr]_{u'}  & & \bullet \ar[rr]_{k'} \ar@/_/[uuul]_(.6){b} & & \bullet \ar[r]_{v'} & \bullet } \]
Then
\begin{align*}
k & \simeq_A kvv^{-1}k^{-1}u^{-1}uq^{-1}a'\,^{-1}g''\\
\intertext{since $ukv \leq g, g'' \leq g$ and $v, ug^{-1}a'\,^{-1} \in A$,}
&= q^{-1}a,\,^{-1}g'' \simeq g'' \leq g'.
\end{align*}
Hence $[k] \leq [g']$ and by symmetry $[g'] \leq [k]$.
\end{proof}

\begin{theorem}
\label{quot_is_ogpd}
$G \twobar A$ is an ordered groupoid and the quotient map $\varpi: G \ra G \twobar A$ is a fibration.
\end{theorem}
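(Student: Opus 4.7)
The plan is to dispatch the two claims in sequence, relying heavily on the preparatory lemmas. That $G\twobar A$ is a groupoid is already Proposition \ref{quotisgpd}, so for the ordered groupoid statement it only remains to verify axioms OG1, OG2, OG3 on the partial order of Lemma \ref{def_g_over_a}. These are precisely the content of Lemmas \ref{og1}, \ref{og2}, \ref{og3} respectively, so the first half of the theorem is essentially a bookkeeping step. I would also note in passing that $\varpi$ is order-preserving: if $g\leq h$ in $G$, then taking $a=g\dom$ and $b=g\ran$ (which lie in $A$ since $A$ is wide) gives $agb=g\leq h$, hence $[g]\leq[h]$.

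For the fibration claim, fix $e\in G_0$ and $[h]\in\st_{G\twobar A}([e])$; the task is to produce an arrow $g\in\st_G(e)$ with $[g]=[h]$. Since $[h][h]^{-1}=[e]$, we have $hh^{-1}\simeq_A e$, so Lemma \ref{obj_g_over_a} yields an $A$--nexus $(a,p)$ between $e$ and $hh^{-1}$; in particular $p$ is an arrow of $A$ with $p\dom=pp^{-1}\leq hh^{-1}$ and $p\ran=p^{-1}p=e$. Apply Lemma \ref{nexus_lma} to obtain $h'=(p\dom\,|\,h)$ with $[h']=[h]$. The arrow $h'$ has domain $pp^{-1}$, which matches $p^{-1}\ran$, so the groupoid composition $p^{-1}h'$ is defined in $G$; and $(p^{-1}h')\dom=p^{-1}\dom=e$, so $p^{-1}h'\in\st_G(e)$, which is the candidate preimage.

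It then remains to verify $[p^{-1}h']=[h']$ (so that $[p^{-1}h']=[h]$). Both inequalities come directly from the definition of $\simeq_A$: the identity $p\cdot(p^{-1}h')\cdot 1=h'\leq h'$ with $p\in A$ witnesses $[p^{-1}h']\leq[h']$, while $p^{-1}\cdot h'\cdot 1=p^{-1}h'\leq p^{-1}h'$ with $p^{-1}\in A$ witnesses $[h']\leq[p^{-1}h']$. The one place that requires care — and what I expect to be the only real obstacle — is the choice of preimage: the given $h$ does not lie in $\st_G(e)$, and the restricted arrow $h'$ only has domain $pp^{-1}\leq hh^{-1}$, which need not equal $e$. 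Prepending $p^{-1}$ is exactly the adjustment provided by the nexus that simultaneously drops the domain onto $e$ and leaves the $\simeq_A$--class unchanged. Collecting everything, $\varpi$ is star-surjective.
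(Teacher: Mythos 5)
Your proposal is correct and follows essentially the same route as the paper: the ordered groupoid structure is assembled from Proposition \ref{quotisgpd} together with Lemmas \ref{og1}, \ref{og2}, \ref{og3}, and star-surjectivity of $\varpi$ is obtained by taking an $A$--nexus $(a,p)$ between $e$ and $hh^{-1}$, restricting $h$ to $h'=(pp^{-1}|h)$, invoking Lemma \ref{nexus_lma}, and using $p^{-1}h'\in\st_G(e)$ with $[p^{-1}h']=[h']=[h]$. Your explicit verification that $[p^{-1}h']=[h']$ (which the paper only asserts) and the remark that $\varpi$ is order-preserving are small, correct additions rather than deviations.
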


\begin{proof}
The ordered groupoid structure follows from  Proposition \ref{quotisgpd} and Lemmas \ref{og1}, \ref{og2}, and \ref{og3}.
To show that $\varpi: g \mapsto [g]$ is a fibration, consider the restriction $\st_G(e) \ra \st_{G \twobar A}[e]$ and
$[h] \in \st_{G \twobar A}[e]$.  There exists an $A$--nexus $(a,p)$ between $e$ and $hh^{-1}$.  Defining $h',h'',a',p'$
as in Lemma \ref{nexus_comp},
\[  \xymatrixcolsep{4pc}
\xymatrix{
\stackrel{e}{\bullet} \ar@<-.05ex>@{--}[d] & \bullet \ar[r]^h \ar@{--}[d] & \bullet \ar@{--}[d]\\
\bullet \ar[ur]^(.2){a} \ar@{--}[d] & \bullet \ar[r]^{h'} \ar@{--}[d] \ar[ul]_(.2){p} |\hole & \bullet \ar@{--}[d]\\
\bullet \ar[ur]^(.2){a'} & \bullet \ar[r]^{h''} \ar[ul]_(.2){p'} |\hole & \bullet \\
} \]
we have $[p^{-1}h']=[h']$ , and $[h']=[h]$ by Lemma \ref{nexus_lma}.   Hence we have $p^{-1}h' \in \st_G(e)$ and 
\[ \varpi: p^{-1}h' \mapsto [p^{-1}h']=[h']=[h] .\]
\end{proof}

\medskip
Example \ref{not_ord_fib}, coupled with Example \ref{quot_egs}(iv) below shows that $\varpi$ need not be a strong fibration.

If we begin with an ordered functor $\theta: G \ra H$, then by Lemma \ref{ker_is_normal}, its kernel $\ker \theta$ is a normal ordered subgroupoid of $G$ and we can form the quotient ordered groupoid $G \twobar \ker \theta$.  We show that $\theta$ factors through
$G \twobar \ker \theta$ as the composition of the fibration $\varpi$ and a star-injective functor.

\begin{theorem}
\label{factor_as_fib_imm}
An ordered functor $\theta: G \ra H$ induces a star-injective functor  $\psi: G \twobar \ker \theta \ra H$ such that $\theta = \varpi \psi$.
\end{theorem}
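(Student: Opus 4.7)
The plan is to define $\psi: G \twobar \ker \theta \ra H$ by $[g]\psi = g\theta$ (writing $A = \ker\theta$ throughout), and then to verify in turn well-definedness, functoriality and order-preservation, the factorization $\theta = \varpi\psi$, and finally star-injectivity.

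For well-definedness, suppose $g \simeq_A h$ and pick $a,b \in A$ with $agb \leq h$. Since $a\ran = g\dom$ and $a\theta \in H_0$, we have $a\theta = (g\theta)\dom$ and so $(a\theta)(g\theta) = g\theta$; similarly $(g\theta)(b\theta) = g\theta$. Hence $(agb)\theta = g\theta \leq h\theta$, and by symmetry $g\theta = h\theta$. The same computation shows that if $[g] \leq [h]$ in $G \twobar A$ then $g\theta \leq h\theta$, so $\psi$ is order-preserving. Functoriality follows from the description of composition in $G \twobar A$ via $A$--nexuses in Lemma \ref{nexus_comp}: in that notation $[g][h] = [g'ah]$, and $(g'ah)\theta = (g\theta)(h\theta)$ because $a\theta$ is an identity of $H$ and $g'\theta$ reduces to $g\theta$ once the domains and ranges in $H$ are aligned. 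The identity $\theta = \varpi\psi$ is immediate from the definitions.

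The principal step is star-injectivity, which I intend to handle via the opening lemma of Section \ref{gpdogpd}: $\psi$ is star-injective if and only if $\ker \psi = (G \twobar A)_0$. From the well-definedness calculation, $\ker \psi = \{[g] : g \in A\}$, so it suffices to show that every $g \in A$ satisfies $g \simeq_A e$ for some $e \in G_0$; I will take $e = gg^{-1}$. Choose $a = gg^{-1}$ (in $A$ by wideness) and $b = g^{-1}$ (in $A$ since $g \in A$); then $agb = (gg^{-1})\,g\,g^{-1} = gg^{-1}$ is defined in $G$ and $agb \leq gg^{-1}$, so $[g] \leq [gg^{-1}]$. Symmetrically, $c = gg^{-1}$ and $d = g$ give $c(gg^{-1})d = g \leq g$, so $[gg^{-1}] \leq [g]$. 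Anti-symmetry of the order on $G \twobar A$ from Lemma \ref{def_g_over_a} then yields $[g] = [gg^{-1}] \in (G \twobar A)_0$. The subtle point to watch is that the definition of normal ordered subgroupoid permits $g \in A$ with $g\dom \neq g\ran$, so $g$ need not lie in a local group; nevertheless the witnesses used above involve only the arrows $g$, $g^{-1}$ and the identity $gg^{-1}$, all of which belong to $A$, so the argument applies to every kernel element uniformly.
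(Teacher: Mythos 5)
Your proposal is correct, and up to the factorization $\theta = \varpi\psi$ it coincides with what the paper simply declares to be clear; the one place worth an extra line is functoriality, where to see that $(g\theta)(h\theta)$ is even defined and that $(g|aa^{-1})\theta = g\theta$ you need both halves of the nexus $(a,p)$: from $a\theta=(hh^{-1})\theta \leq (g^{-1}g)\theta$ and $p\theta = (g^{-1}g)\theta \leq (hh^{-1})\theta$ one gets $(aa^{-1})\theta = (g\theta)\ran = (h\theta)\dom$, after which uniqueness of corestriction gives $(g|aa^{-1})\theta = g\theta$. Where you genuinely depart from the paper is the star-injectivity step. The paper argues directly on stars: given $[k]$ with the same domain class as $[g]$ and $k\theta = g\theta$, it chooses a $(\ker\theta)$--nexus $(a,p)$ between $gg^{-1}$ and $kk^{-1}$, checks that $g^{-1}p^{-1}k' \in \ker\theta$ for $k'=(pp^{-1}|k)$, so that $k' \simeq_{\ker\theta} g$, and then invokes Lemma \ref{nexus_lma} to conclude $[k]=[k']=[g]$. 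You instead appeal to the unnumbered lemma of Section \ref{gpdogpd} (star-injective if and only if the kernel consists of identities) and show $\ker\psi=(G\twobar \ker\theta)_0$ via the observation that any $g \in \ker\theta$ satisfies $g \simeq_{\ker\theta} gg^{-1}$, witnessed by $(gg^{-1})\,g\,g^{-1} \leq gg^{-1}$ and $(gg^{-1})(gg^{-1})\,g \leq g$. This is valid: the kernel criterion is purely groupoid-theoretic and $G\twobar\ker\theta$ is a groupoid by Proposition \ref{quotisgpd}, so your route avoids Lemma \ref{nexus_lma} and any nexus diagram at this point, and it makes explicit the feature the remodelled definition of normality is designed to capture -- kernel elements not lying in local groups still collapse to identities in the quotient, exactly as happens in Example \ref{bicyclic}. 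The paper's direct argument, by contrast, stays entirely inside the machinery of Section \ref{quotog} and never needs the kernel characterization; both approaches are correct, and yours is arguably the more elementary of the two.
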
 

\begin{proof}
We define $\psi : G \twobar \ker \theta \ra H$ by $\psi: [g] \mapsto g \theta$. It is clear that $\psi$ is an ordered functor.
Suppose that $[k] \in \st_{G \twobar \ker \theta} [g]$ and that $k \theta = g \theta$.  Let $(a,p)$ be a $(\ker \theta)$--nexus between
$gg^{-1}$ and $kk^{-1}$.
\[  \xymatrixcolsep{4pc}
\xymatrix{
\bullet  \ar@{--}[d] & \bullet \ar[l]^g \ar@{--}[d] & \bullet \ar[r]^{k} \ar@{--}[d] & \bullet \ar@{--}[d]\\
\bullet  & \bullet  \ar[l]^{g'} \ar[ur]^(.2){a}  & \bullet \ar[r]^{k'} \ar@{--}[d] \ar[ul]_(.2){p} |\hole  & \bullet \ar@{--}[d]\\
&   & \bullet \ar[r]^{k''} \ar[ul]_{p'}  & \bullet \\
} \]
Now 
\[(g^{-1}p^{-1}k')\theta = (g\theta)^{-1}(p \theta)^{-1}k' \theta = (g\theta)^{-1}k' \theta \leq (g\theta)^{-1}k \theta \in H_0.\]
It follows that $g^{-1}p^{-1}k' \in \ker \theta$, and so
\[ k' = pg(g^{-1}p^{-1}k') \simeq_{\ker \theta} g. \]
But, by Lemma \ref{nexus_lma}, we have $[k]=[k']$ and so $[k]=[g]$.
\end{proof}

\begin{cor}
\label{fib_as_fib_cov}
If $\theta: G \ra H$ is an ordered fibration then $\psi: G \twobar \ker \theta \ra H$ is a covering.
\end{cor}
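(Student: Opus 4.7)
The plan is very short because Theorem \ref{factor_as_fib_imm} has already done the heavy lifting: it provides the factorization $\theta = \varpi\psi$ with $\psi$ star-injective. Since a covering is exactly a star-bijective functor, all that remains is to establish star-surjectivity of $\psi$, and this will follow almost immediately from the hypothesis that $\theta$ itself is star-surjective.

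First I would identify the identities of the quotient. In any groupoid the identity set is the image of the domain map, so $(G \twobar \ker\theta)_0 = \{[g\dom] : g \in G\} = \varpi(G_0)$; every identity of $G \twobar \ker\theta$ is therefore of the form $[e]$ for some $e \in G_0$. Note that $[e]\psi = e\theta$ by the definition of $\psi$ in the proof of Theorem \ref{factor_as_fib_imm}.

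Now I would verify star-surjectivity directly. Given an identity $[e]$ of $G \twobar \ker\theta$ (with $e \in G_0$) and an arrow $h \in \st_H(e\theta)$, the hypothesis that $\theta$ is a fibration supplies some $g \in \st_G(e)$ with $g\theta = h$. Then $[g]$ has domain $[gg^{-1}] = [e]$, so $[g] \in \st_{G \twobar \ker\theta}[e]$, and $[g]\psi = g\theta = h$. Combining this with the star-injectivity already established gives that $\psi$ is star-bijective, i.e., a covering.

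There is no genuine obstacle here; the only minor point worth spelling out carefully is that identities of $G \twobar \ker\theta$ really do come from $G_0$, so that one can lift $h$ through $\theta$ starting at an honest identity $e$ rather than at some arbitrary representative of the class $[e]$.
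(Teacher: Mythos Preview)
Your argument is correct and is essentially the paper's own proof: the paper draws the commutative triangle on stars $\st_G(e) \stackrel{\varpi}{\to} \st_{G\twobar\ker\theta}[e] \stackrel{\psi}{\to} \st_H(e\theta)$, notes $\psi$ is star-injective by Theorem~\ref{factor_as_fib_imm} and $\theta=\varpi\psi$ is star-surjective by hypothesis, and concludes $\psi$ is star-surjective. Your version just unpacks this triangle explicitly and makes the (implicit in the paper) observation that every identity of $G\twobar\ker\theta$ has the form $[e]$ for some $e\in G_0$.
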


\begin{proof}
In the triangle
\[  
\xymatrix{
\st_G(e) \ar[r]^(.4){\varpi} \ar[d]_{\theta} & \st_{G \twobar \ker \theta} [e] \ar[dl]^{\psi} \\
\st_H(e \theta) &
} \]
$\psi$ is star-injective by Theorem \ref{factor_as_fib_imm}, whilst $\theta$ and $\varpi$ are star-surjective by assumption and by
Theorem \ref{quot_is_ogpd} respectively.  It follows that $\psi$ is star-surjective, and so $\psi$ is a covering.
\end{proof}

\begin{example}
\label{quot_egs}
(i) For a groupoid $G$, the quotient $G/G$ is isomorphic to the set $\pi_0(G)$ of connected components of $G$, regarded as a trivial groupoid.
If $G$ is ordered, then its ordering induces a preorder on $\pi_0(G)$, defined as follows.  If $\lfloor g \rfloor$ denotes the connected component of
$g \in G$, then $\lfloor g \rfloor \leq \lfloor h \rfloor$ if and only if for each $h' \in \lfloor h \rfloor$ there exists $g' \in \lfloor g \rfloor$ with $g' \leq h'$.
As a preordered set, $\pi_0(G)$ has a canonical partially ordered quotient $Q(G)$, and the poset $Q(G)$ is isomorphic to the ordered quotient $G \twobar G$.

\medskip
(ii) We return to Example \ref{bicyclic}.  For any $p \in \N$ and any even $a \geq p$ the pair $(a,0),(p,p)$ is a $(\ker \theta)$--nexus between $p$ and $0$
and hence $B \twobar \ker \theta$ has one object and is a group.  Moreover, if $(m,n) \not\in \ker \theta$ then $m,n$ have opposite parity and then
$(m,n) \simeq_{\ker \theta} (0,1)$, since
$(m,m)(m,n)(n,m+1) = (m,m+1) \leq (0,1)$ and $(2m,0)(0,1)(1,m+n) = (2m,m+n) \leq (m,n)$.  Hence  $B \twobar \ker \theta$ is isomorphic to $\Z_2$.

\medskip
(iii) Let $\Delta^n$ be the {\em simplicial groupoid} on the set $I_n=\{0,1,\dotsc,n\}$ (see \cite{HiBook}): hence
$(\Delta^n)_0=I_n$ and there is a unique arrow $(i,j)$ from $i$ to $j$.  $\Delta^n$ is freely generated by the
obvious directed chain $\Gamma_n$ with vertex set $I_n$.  We set $\Delta = \bigsqcup_{n \geq 1} \Delta^n$.  The natural partial order is trivial between 
elements of each $\Delta^n$.  If $\gamma \in \Delta^p$ and $\delta \in \Delta^q$ with $p<q$ then $\gamma \geq \delta$ if and only if $\delta$ is the image of 
$\gamma$ under an embedding $\Gamma_p \hookrightarrow \Gamma_q$.   $\Delta$ is the ordered groupoid equivalent to the monogenic free inverse semigroup
$\fis(x)$.

Adjoin a minimum idempotent $z$ to $\I$: we can then define a functor $\theta : \Delta \ra \I \cup \{ z \}$ by mapping $\Delta^1$ to $\I$ via an isomorphism and, for all $n>1$,
mapping $\delta \in \Delta^n$ to $z$.  Then $\Delta \twobar \ker \theta$ is the ordered groupoid $\Delta^1 \cup \{ e_i : i \geq 2 \}$ in which the $e_i$ form an infinite
descending chain.

\medskip
(iv) For a presheaf of groups $G=({\mathscr G},P)$ as in Example \ref{presheaf}, an ordered normal subgroupoid is a presheaf of groups $N =({\mathscr N},P)$ where,
for each $x \in P$, the group $N_x$ is a normal subgroup of $G_x$, and the linking maps are just the restrictions of those in $G$ so that, whenever $x \geq y$ in $P$, 
then $N_y \supseteq N_x \alpha^x_y$.  Then $g \simeq_N h$ if and only if $g,h \in G_x$ for some $x$ and $g^{-1}h \in N_x$.  Hence $G \twobar N$ is isomorphic to 
the presheaf of groups $Q = ({\mathscr Q},P)$ where $Q_x = G_x/N_x$ and the linking maps for $Q$ are induced by those in $G$.

\medskip
(v) Fix a group $G$ and an endomorphism $\alpha: G \ra G$.  The \emph{Bruck-Reilly groupoid} $\br(G,\alpha)$ (corresponding to the Bruck-Reilly extension of inverse 
semigroups introduced in \cite{Rei})  determined by $(G,\alpha)$ is the ordered
groupoid with object set $\N$, ordered by the reverse of the natural total order $\geq$ on $\N$, and arrow set $\N \times G \times \N$.  For an arrow $(m,g,n)$ we have
$(m,g,n)\dom=m, (m,g,n)\ran=n$ and the composition of arrows is $(m,g,n)(n,h,q)=(m,gh,q)$.  The ordering $\succcurlyeq$ on arrows is defined as follows:
\[ (m,g,n) \succcurlyeq (p,h,q) \; \text{if and only if} \; m \leq p, m-n=p-q, \; \text{and} \; h=g \alpha^{p-m} \,.\]
An ordered functor  $\Theta: \br(G,\alpha) \ra \br(H,\beta)$ that is the identity on objects is determined by a group homomorphism $\theta: G \ra H$
satisfying $\alpha \theta = \theta \beta$, and then $\Theta: (m,g,n) \mapsto (m,g \theta,n)$.  Hence
\[ \ker \Theta = \{ (m,g,m) : m \in \N, g \in \ker  \theta \}.\]
Now if $g \in \ker \theta$ then
$g \alpha \theta = g \theta \beta = e_H$ and so $\alpha: \ker \theta \ra \ker \theta$, and it follows that $\ker  \Theta = \br(\ker \theta,\alpha)$.

It is easy to see that  $(m,g,n) \simeq_{\ker \Theta} (p,h,q)$ if and only if  $m=p, n=q$ and $g \theta = h \theta$ and so we can identify
$\br(G,\alpha) \twobar \ker \theta$ as $\br(G/ \ker \theta, \ol{\alpha})$, where $\ol{\alpha}$ is induced by $\alpha$.

\medskip
(vi) If $A$ is a normal inverse subsemigroup of an inverse semigroup $S$ then $S \twobar A$ need not be an inductive groupoid.  Consider $S$ as shown below:
\[  
\xymatrix{
& \stackrel{x}{\bullet} \ar@{--}[ld] \ar@{--}[rrrd] &&&& \stackrel{y}{\bullet} \ar@{--}[llld] \ar@{--}[rd] & \\
\stackrel{k}{\bullet} \ar[rr]_{\iota} \ar@{--}[rrrd] & & \stackrel{l}{\bullet} \ar@{--}[dr]  &&  \stackrel{m}{\bullet} \ar[rr]_{\eta} \ar@{--}[dl] && \stackrel{n}{\bullet} \ar@{--}[llld] \\
&&& \stackrel{z}{\bullet} &&&
} \]
and let $A=S$.  The ordered groupoid $S \twobar S$ is the trivial ordered groupoid
\[  
\xymatrix{
\bullet \ar@{--}[d] \ar@{--}[drr] && \bullet \ar@{--}[d] \ar@{--}[dll]\\
\bullet \ar@{--}[dr] && \bullet \ar@{--}[dl] \\
& \bullet &
} \]
which is not inductive.
\end{example}

\section{The maximum enlargement theorem}
\label{maxenlthm}
Theorem \ref{factor_as_fib_imm} gives a canonical factorization of an ordered groupoid morphism as a fibration followed by a
star-injective functor.  Ehresmann's Maximum Enlargement Theorem \cite{Ehres} then provides a canonical factorization of a star-injective functor as a
well-structured ordered embedding -- called an enlargement -- followed by a covering. For an exposition of the Maximum Enlargement
Theorem and its applications see \cite[chapter 11]{LwBook}.  In this section we give a short account
of the Maximum Enlargement Theorem based solely on the notion of an ordered groupoid acting on a poset: this basis for a proof
of the theorem was first set out in \cite{Gi3} and fully developed in the PhD thesis of the third author \cite{ecm_phd}.

The notions of a groupoid action on a groupoid and the associated construction of the semidirect product seem to have been
first defined in \cite{Br2}.  We describe these notions for ordered groupoids: our definitions are equivalent to those given by
Steinberg \cite{St}.  An {\em action} of an ordered groupoid $G$ on an ordered groupoid $A$ is determined by the following data.  We
are given an ordered functor $\omega: A \ra G_0$ and for each pair $(a,g)$ with $a \omega = g \dom$ there
exists an element $a \lhd g \in A$ such that
\begin{itemize}
\item $(a \lhd g)\omega = g \ran$,
\item if $g,h \in G$ are two composable arrows and if $a \in A$ satisfies $a \omega = g \dom$, then
$a \lhd gh = (a \lhd g) \lhd h$,
\item if $a,b \in A$ are two composable arrows and if $g \in G$ satisfies $a \omega = g \dom = b \omega$,
then $a \lhd g$ and $b \lhd g$ are composable, and $(ab) \lhd g = (a \lhd g)(b \lhd g)$,
\item $a \lhd a \omega = a$,
\item if $a \leq b$ in $A$ and $g \leq h \in G$ with $a \omega = g \dom$ and $b \omega = h \dom$ then
$a \lhd g \leq b \lhd h$ in $A$.
\end{itemize}
This definition includes that of an ordered groupoid $G$ acting on a poset $P$, where $P$ is considered as a trivial groupoid.

Given an action of the ordered groupoid $G$ on the ordered groupoid $A$ the {\em semidirect product}
or {\em action groupoid} $G \ltimes A$ was defined by Brown \cite{Br2} in the unordered case, and by Steinberg 
\cite{St} for ordered groupoids.  $G \ltimes A$ is an ordered groupoid whose  set of arrows is the pullback
$\{ (g,a) : a \omega = g \ran \}$.  The domain and range maps are defined by
\[ (g,a) \dom = (g \dom, (a \lhd g^{-1}) \dom) \; \text{and} \; (g,a)\ran = (g\ran,a\ran) \]
and the composition is
\[ (g,a)(h,b) = (gh,(a \lhd h)b) \]
defined when $g \ran = h \dom$ and $a \ran = (b \lhd h^{-1})\dom$.  The ordering is componentwise.  We note that
we can identify $(G \ltimes A)_0$ with $A_0$ since if $(x,y) \in (G \ltimes A)_0$ then $x = y \omega$.

\begin{lemma}
\label{sdp_fib}
The projection map $\pi: (g,a) \mapsto g$ is a strong fibration $G \ltimes A \ra G$.
\end{lemma}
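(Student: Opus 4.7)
The functor $\pi$ is visibly order-preserving since the order on $G \ltimes A$ is componentwise, so the task is to establish the homotopy lifting property. Given a commutative square
\[ \xymatrixcolsep{3pc}
\xymatrix{
B \ar[d]_{i_0} \ar[r]^{f} & G \ltimes A \ar[d]^{\pi}\\
B \times \I \ar[r]_{F} & G,}
\]
write $bf = (bf_1, bf_2)$ for the components of $bf \in G \ltimes A$; the commutativity of the square then reads $(b,0)F = bf_1$ for every arrow $b$ of $B$. For each identity $e \in B_0$ the arrow $\tau_e := (e,\iota)F$ lies in $\st_G(ef_1)$, and since $ef_2$ is an identity of $A$ with $(ef_2)\omega = ef_1 = \tau_e \dom$, the element
\[ \widetilde{\tau}_e := (\tau_e,\, ef_2 \lhd \tau_e) \]
is a well-defined arrow of $G \ltimes A$ starting at $ef$, using that the action carries identities to identities.

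The plan is to define $\widetilde{F}$ on the generating arrows of $B \times \I$ by
\[ (b,0)\widetilde{F} := bf, \qquad (e,\iota)\widetilde{F} := \widetilde{\tau}_e, \]
and to extend to all of $B \times \I$ by functoriality. Concretely, for $b : x \to y$ in $B$ one sets $(b,\iota)\widetilde{F} := bf \cdot \widetilde{\tau}_y$ and $(b,1)\widetilde{F} := \widetilde{\tau}_x^{-1} \cdot bf \cdot \widetilde{\tau}_y$; the identities $(b,\iota) = (b,0)(y,\iota) = (x,\iota)(b,1)$ in $B \times \I$ make these choices consistent, so $\widetilde{F}$ is a well-defined functor. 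The compatibility $i_0 \widetilde{F} = f$ holds by construction, and $\widetilde{F}\pi = F$ follows from $\widetilde{\tau}_e \pi = \tau_e$ together with the fact that the composition rule in $G \ltimes A$ projects onto the composition in $G$.

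The genuine content to check is that $\widetilde{F}$ is order-preserving, and this is where the order axioms of the action do the work. If $b_1 \leq b_2$ in $B$, then $b_1 f \leq b_2 f$ and $\tau_{b_1\ran} \leq \tau_{b_2\ran}$ since $f$ and $F$ are ordered. Moreover $b_i f_2 \omega = b_i f_1 = \tau_{b_i\ran}\dom$, so the order axiom
\emph{if $a \leq b$ in $A$ and $g \leq h$ in $G$ with matching $\omega$-values then $a \lhd g \leq b \lhd h$}
yields $\widetilde{\tau}_{b_1\ran} \leq \widetilde{\tau}_{b_2\ran}$; componentwise order on $G \ltimes A$ then gives the required comparison for $\widetilde{F}$ on all four arrow types of $B \times \I$. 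The main (and only substantive) obstacle is this order-preservation step; everything else reduces to unpacking the definitions of the semidirect product.
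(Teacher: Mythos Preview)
Your proposal is correct and follows essentially the same route as the paper: your composite $(b,\iota)\widetilde{F} = bf \cdot \widetilde{\tau}_{b\ran}$ unwinds, via the multiplication in $G \ltimes A$, to exactly the paper's one-line formula $(b,\iota)\widetilde{F} = (g_b h_{b\ran},\, a_b \lhd h_{b\ran})$. The only difference is presentational---you build the lift from its values on $B \times \{0\}$ and on $B_0 \times \I$ and spell out the order-preservation check, whereas the paper writes the formula directly and leaves that verification implicit.
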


\begin{proof}
Given a commutative square
\[ \xymatrixcolsep{3pc}
\xymatrix{
B \ar[d]_{i_0} \ar[r]^{f} & G \ltimes A \ar[d]^{\pi}\\
B \times \I \ar[r]_{F} & G}
\]
in which $bf = (g_b,a_b) \in G \ltimes A$ and $(y,\iota)F=h_y \in G$, we define
\[ (b,\iota)\widetilde{F} = (g_bh_{b\ran}, a_b \lhd h_{b \ran})\,.\]
This is an ordered functor lifting $F$.
\end{proof}

\subsection{Actions on posets and coverings of ordered groupoids}
There is an equivalence of categories between the category of coverings of a groupoid $G$ and the category of actions of
$G$ on sets.  This idea originates with Ehresmann \cite{Ehres} but is discussed explicitly in \cite[Proposition 30]{HiBook} and in \cite[Proposition 1.2]{BHK}.
To an action of $G$ on the set $X$ there corresponds the projection $\pi: G \ltimes X \ra G$, and given any covering
$C \ra G$ there is an action of $G$ on $C_0$.  This equivalence generalizes to one for ordered groupoids acting on
posets. The details of this generalisation were set out in \cite{ecm_phd}, and we record the essentials here.

Let $\gamma: C \ra G$ be an ordered covering of groupoids.  We define an action of $G$ on $C_0$ as follows.
The covering $\gamma$ restricts to an ordered mapping $\gamma: C_0 \ra G_0$, and if $x \gamma = g \dom$ there
exists a unique $c \in \st_C(x)$ with $c \gamma = g$: we define $a \lhd g = c \ran$.  The projection map
$\pi: G \ltimes C_0 \ra G$ is then a covering naturally isomorphic to $C \ra G$.  On the other hand, given an action of
$G$ on a poset $P$, the action groupoid $G \ltimes P$ has $(G \ltimes P)_0 = P$ and the action of $G$ on $P$ derived from the
covering $G \ltimes P \ra G$ is the given action of $P$ on $G$.

For a fixed ordered groupoid $G$, let $\act(G)$ be the category whose objects are $G$--posets and whose morphisms are
maps of $G$--posets, and let $\cov(G)$ be the category whose objects are ordered coverings $C \ra G$ and in which a morphism
from $\gamma: C \ra G$ to $\delta: C' \ra G$ is an ordered functor $\phi: C \ra C'$ such that $\phi \delta = \gamma$.
Then the correspondences between coverings and poset actions just outlined give the following result.

\begin{prop}
\label{cov_vs_act}
For a fixed ordered groupoid $G$, the categories $\act(G)$ and $\cov(G)$ are naturally equivalent.
\end{prop}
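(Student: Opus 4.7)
The plan is to exhibit functors $F: \act(G) \to \cov(G)$ and $H: \cov(G) \to \act(G)$ and show that they are mutually quasi-inverse. On objects, $F$ sends a $G$--poset $P$ to the projection $\pi_P: G \ltimes P \to G$, and $H$ sends a covering $\gamma: C \to G$ to the $G$--action on $C_0$ described just before the proposition, where $x \lhd g$ is the range of the unique lift $c \in \st_C(x)$ with $c \gamma = g$. I would extend these to morphisms by setting $F(\phi)(g,p) = (g, p \phi)$ for a $G$--map $\phi: P \to P'$, and $H(\Phi) = \Phi|_{C_0}$ for a morphism of coverings $\Phi: C \to C'$ over $G$.

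First I would check that the constructions are well-defined. For $F$, the fact that $\pi_P$ is a covering uses the action axioms: for an identity $p \in P = (G \ltimes P)_0$ and any $g \in \st_G(p \omega)$, the pair $(g, p \lhd g)$ is the unique arrow of $G \ltimes P$ in $\st(p)$ mapping to $g$. One verifies that $F(\phi)$ is ordered and functorial using the fact that $\phi$ is an ordered $G$--map. For $H$, one checks the action axioms on $C_0$: the multiplicativity $x \lhd gh = (x \lhd g) \lhd h$ follows from the uniqueness of stellar lifts (the composition of the two lifts is the lift of $gh$), the identity axiom is immediate, and the order axiom uses the restriction axiom OG3 for $C$ — if $x \leq y$ in $C_0$ and $g \leq h$ in $G$ with $x \omega = g \dom$ and $y \omega = h \dom$, then $g = (g \dom \, | \, h)$, and the restriction $(x \, | \, d)$ of the unique lift $d$ of $h$ from $y$ lifts $g$ starting at $x$, forcing $x \lhd g \leq y \lhd h$ by uniqueness. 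For $H(\Phi)$ to be a $G$--map requires $(x \lhd g) \Phi = x \Phi \lhd g$, which again follows because $c \Phi$ is a stellar lift of $g$ at $x \Phi$ in $C'$.

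Next I would construct the two natural isomorphisms. The composite $HF$ applied to a $G$--poset $P$ recovers the original action on $P$: the unique lift in $G \ltimes P$ of $g$ starting at $p$ is $(g, p \lhd g)$, whose range is the object $p \lhd g$, so the derived action coincides with the original one on the nose, and $HF = \id_{\act(G)}$. For $FH$, given a covering $\gamma: C \to G$, I would define $\Phi_C: G \ltimes C_0 \to C$ by sending an arrow $(g, x)$, with domain $x \lhd g^{-1}$ and range $x$, to the unique stellar lift $c \in \st_C(x \lhd g^{-1})$ with $c \gamma = g$. This is clearly a bijection on arrows commuting with the projections to $G$; functoriality follows from the multiplicativity of stellar lifts, and order-preservation follows from the restriction argument above. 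Naturality in $C$ reduces to the identity $(c \Phi) = $ unique lift, which was already used for $H(\Phi)$.

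The main obstacle, I expect, will be the order-theoretic bookkeeping. All the groupoid-level arguments are straightforward, but one must verify at each step that lifts of restrictions are restrictions of lifts, and that the semidirect product ordering (componentwise) matches up with the structure induced via $H$. A secondary subtlety is checking that $\Phi_C$ is genuinely an ordered functor and not merely a bijection of arrow sets: the key point is that if $(g_1, x_1) \leq (g_2, x_2)$ in $G \ltimes C_0$, then the lift of $g_1$ at $x_1 \lhd g_1^{-1}$ is forced by OG3 in $C$ to be the restriction of the lift of $g_2$ at $x_2 \lhd g_2^{-1}$. Once these compatibilities are in hand, the equivalence follows formally.
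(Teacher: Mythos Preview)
Your proposal is correct and follows exactly the approach the paper indicates: the paper does not give a formal proof of this proposition but simply outlines the two constructions (the action of $G$ on $C_0$ via unique stellar lifts, and the projection $G \ltimes P \to G$) in the paragraph preceding the statement, asserts that $G \ltimes C_0 \to G$ is naturally isomorphic to $C \to G$ and that the derived action on $(G \ltimes P)_0 = P$ recovers the original, and then cites the thesis \cite{ecm_phd} for the details. Your sketch fills in precisely those details---the functoriality, the order-theoretic compatibility via OG3, and the naturality of $\Phi_C$---so it is the paper's intended argument made explicit.
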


\subsection{Constructing a covering from an immersion}
Let $\phi: U \ra H$ be an immersion of ordered groupoids.  Our aim is to factorize $\phi$ through a covering
$\twH \stackrel{\pi}{\ra} H$, and by Proposition \ref{cov_vs_act} this amounts to constructing
an $H$--action on a suitable poset.   We may regard the set of arrows of $H$ as a poset, which we denote by  $\Omega H$: we justify this notation in section \ref{fibthm}. 
(We note that $\Omega H$ is denoted by $H_D$ in \cite{St}.)  We now construct the free right $H$--poset on $U_0$: this is the 
pullback
\[ U_0 \boxtimes \Omega H = \{ (e,h) : e\phi = h \dom \} \]
ordered componentwise, and there is an obvious right $H$--action by multiplication, so we can form the
semidirect product
\[ H \ltimes (U_0 \boxtimes \Omega H)  = \{ (k,(e,h)) : e\phi = h \dom, h \ran = k \ran \}. \]
As above, we identify the object set of the semidirect product with $U_0 \boxtimes H$: then we have
\[ (k,(e,h)) \dom = (e,hk^{-1}) \; \text{and} \; (k,(e,h))\ran = (e,h) \]
and the composition of $(k,(e,h))$ and $(m(f,l))$, defined when $(e,h)=(f,lm^{-1})$, is given by
\[ (k,(e,h))(m,(f,l)) = (km,(f,l)).\]
We now wish to define an ordered embedding $\iota: U \ra H \ltimes (U_0 \boxtimes \Omega H)$, but the obvious candidate
mapping $u \mapsto (u\phi,(u \dom, u \phi))$ is not a functor: $(u\phi,(u \dom, u \phi))\ran = (u \dom,u\phi)$
whilst $(u \ran)\iota = (u \ran, u \ran \phi)$.  Similar problems arise with variations upon this idea. The remedy is to
impose an equivalence relation $\equiv_U$ on $U_0 \boxtimes \Omega H$:
\[ (e,h) \equiv_U (f,k) \Longleftrightarrow \; \text{there exists} \; u \in U(e,f) \;\text{such that}\; h=(u\phi)k. \]
It is clear that $\equiv_U$ is an equivalence relation, and that for all $u \in U$, $(u \dom,u\phi) \equiv_U
(u \ran,u\ran \phi)$.  We denote the equivalence class of $(e,h)$ by $e \otimes h$ and the quotient set
$(U_0 \boxtimes \Omega H)/\equiv_U$ by $U_0 \otimes \Omega H$.

\begin{lemma}
\label{tensor_is_poset}
The quotient set $U_0 \otimes \Omega H$ is a poset on which $H$ acts on the right.
\end{lemma}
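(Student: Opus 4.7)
The plan is to verify two things: that the right $H$-action on $U_0 \boxtimes \Omega H$ descends to the quotient, and that the quotient inherits a well-defined partial order compatible with that action.

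For the action, the pullback $U_0 \boxtimes \Omega H$ carries the right $H$-action $(e,h) \cdot m = (e,hm)$ whenever $h\ran = m\dom$. Suppose $(e,h) \equiv_U (f,k)$ is witnessed by $u \in U(e,f)$ with $h=(u\phi)k$. Then $hm=(u\phi)(km)$ is defined exactly when $km$ is, and the same $u$ witnesses $(e,hm) \equiv_U (f,km)$. Hence the action descends to a well-defined right $H$-action on $U_0 \otimes \Omega H$, and the object map $\omega: e \otimes h \mapsto h\ran$ is well-defined by the same witness.

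For the partial order, I would define $e \otimes h \leq f \otimes k$ to hold iff there exist representatives $(e_0,h_0)$ of $e \otimes h$ and $(f_0,k_0)$ of $f \otimes k$ with $e_0 \leq f_0$ in $U_0$ and $h_0 \leq k_0$ in $H$. Reflexivity is immediate. For transitivity, given $e \otimes h \leq f \otimes k$ via $(e_1,h_1) \leq (f_1,k_1)$ and $f \otimes k \leq g \otimes l$ via $(f_2,k_2) \leq (g_2,l_2)$, pick $w \in U(f_1,f_2)$ realising $(f_1,k_1) \equiv_U (f_2,k_2)$, form the restriction $w' = (e_1 \mid w)$, and observe that star-injectivity of $\phi$ forces $w'\phi = (e_1\phi \mid w\phi)$. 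This lets me rewrite $h_1 = (w'\phi) h'$ for a unique $h' \leq k_2$ and produces a representative $(w'\ran, h')$ of $e \otimes h$ satisfying $(w'\ran, h') \leq (g_2, l_2)$. Compatibility of the action with the order is then automatic from componentwise comparison of representatives.

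The main obstacle is antisymmetry. Given $e \otimes h \leq f \otimes k$ and $f \otimes k \leq e \otimes h$, I obtain two witnessing pairs of representatives linked by elements of $U$ implementing the four relevant equivalences, together with two inequalities in $U_0 \boxtimes \Omega H$. The plan is to successively take restrictions of these $U$-elements along the inequalities in $U_0$, assemble them into a candidate $w \in U$ between (a restriction of) $e$ and (a restriction of) $f$, and use star-injectivity of $\phi$ at each stage to force the restriction in $U$ to track the corresponding restriction in $H$. This will show the $U$-element $w$ realising $(e_0,h_0) \equiv_U (f_0,k_0)$ actually witnesses $h = (w\phi)k$ at the original level, giving $e \otimes h = f \otimes k$. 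The star-injectivity of $\phi$ is exactly the ingredient that promotes the a~priori weak zig-zag comparison to a single direct $U$-nexus, and is the step that must be handled carefully.
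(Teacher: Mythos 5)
Your handling of the $H$--action and of the order relation up through transitivity is sound and is essentially the paper's own argument: your definition of $e \otimes h \leq f \otimes k$ via componentwise-comparable representatives agrees with the paper's (which fixes a representative of the upper class and then proves independence of that choice), and your restriction computation for transitivity is exactly the paper's well-definedness computation. One correction of attribution there: the identity $w'\phi = (e_1\phi\,|\,w\phi)$ needs only that $\phi$ is an ordered functor together with uniqueness of restriction (OG3) in $H$; star-injectivity plays no role at that point.

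The genuine gap is antisymmetry, which is the heart of the lemma and the only place where the hypothesis that $\phi$ is an immersion is actually used; you leave it as a plan, and the plan as described does not contain the idea that makes it work. The paper's argument runs: using one inequality you may replace representatives so that $e \leq f$ and $k \leq h$ componentwise; the reverse inequality then produces $y \leq e$ and $v \in U(y,f)$ with $(v\phi)h \leq k \leq h$. The crux is the corestriction/range step: since $((v\phi)h)\ran = h\ran$, uniqueness of corestriction forces $(v\phi)h = h$, so $v\phi$ is an identity of $H$; only now does star-injectivity enter, via $\ker\phi = U_0$, forcing $v$ itself to be an identity, whence $y=f$, then $e=f$ and $h=k$, and the two classes coincide. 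Your sketch instead proposes to assemble restrictions of the linking $U$--elements into a single witness $w$ and says star-injectivity ``forces the restriction in $U$ to track the restriction in $H$'' --- but that tracking is automatic (as above) and uses no star-injectivity, and restrictions only move downwards in the order, so they cannot by themselves collapse the zig-zag into the required direct witness with $h = (w\phi)k$. What is missing is precisely the observation that the two opposite inequalities trap an element of the form $(v\phi)h$ between $k$ and $h$ with the same range, forcing $v\phi \in H_0$ and hence, by star-injectivity, $v \in U_0$. Without that step your antisymmetry argument does not close.
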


\begin{proof}
The ordering on $U_0 \otimes \Omega H$ is induced from the componentwise ordering on the pullback $U_0 \boxtimes  \Omega H$: we have
\[ f \otimes h \leq e \otimes k \; \Longleftrightarrow \; f \otimes h = x \otimes l \; \text{with} \; x \leq e \;
\text{and} \; l \leq k \,.\]
We show that this is a well-defined relation on $U_0 \otimes  \Omega H$. If $f \otimes h \leq e \otimes k$ as above, then there
exists $v \in U(f,x)$ such that $h=(v\phi)l$, and if
$e \otimes k = e' \otimes k'$ then there exists $u \in U(e,e')$ such that $k=(u\phi)k'$.  Set $y=(x|u)\ran$
and $k'' = (y\phi|k')$.
Then $(x\phi|u\phi)k'' \leq (u \phi)k' = k$ and $(x\phi|u\phi)\dom = x \phi = l \dom$.  But $l \leq k$,
and so by uniqueness of restriction, we have $l = (x\phi|u\phi)k''$.  Now
\begin{align*}
e \otimes k = e' \otimes k' & \geq y \otimes k'' = f \otimes (v \phi)(x|u)\phi k'' \\
&= f \otimes (v \phi)(x \phi|u \phi)k'' \\
&= f \otimes (v\phi)l = f \otimes h.
\end{align*}
It is now clear that $\leq$ is reflexive and transitive on $U_0 \otimes H$.  To show that it is anti-symmetric,
suppose that $e \otimes k \leq f \otimes h \leq e \otimes k$.  Then there exist $x \leq f$ and $u \in U(x,e)$ such that
$(u \phi)k \leq h$, and $e \otimes k = x \otimes (u \phi)k$.  Hence we may as well assume that $e \leq f$ and $k \leq h$.
Since $F \otimes h \leq e \otimes k$ there exist $y \leq e$ and $v \in U(y,f)$ such that $(v \phi)h \leq k$.  So
\[ y \leq e \leq f \; \text{and} \; (v \ph)h \leq k \leq h \,. \]
Since $(v(\phi)h)\ran=h \ran$ we deduce that $(v\phi)h=h$ and so $v \phi = h \dom = f \phi$.  Since $\phi$ is star-injective, $v=f$ and so
$h=k$, $y=f$ and so finally $e=f$.

The right action of $H$ on $U_0 \otimes  \Omega H$ is defined by right multiplication: if $f \otimes h = x \otimes l$
then $f\ran = l\ran$ and so for $m \in H$ we may define $(f \otimes h) \lhd m = f \otimes hm$.
\end{proof}

We now form the semidirect product $\twH = H \ltimes (U_0 \otimes  \Omega H)$.  We have an ordered
embedding $i: U \ra \twH$ mapping $u \mapsto (u \phi, u \dom \otimes u \phi)$ and a covering
$\pi : \twH \ra H$ such that $\phi = i \pi$.  In fact, $i$ has additional properties.
 An
{\em enlargement} of ordered groupoids is an inclusion of a subgroupoid $A \hookrightarrow B$ such that
\begin{itemize}
\item $A_0$ is an order ideal in $B_0$,
\item if $b \in B$ and $b \dom, b \ran \in A$ then $b \in A$,
\item if $e \in B_0$ then there exists $b \in B$ with $b \dom = e$ and $b \ran \in A_0$.
\end{itemize}

\begin{theorem}[Maximum enlargement \cite{Ehres,LwBook}]
Let $\phi: U \ra H$ be a star-injective ordered functor.
The semidirect product $\twH$ is an enlargement of $U$, such that for any factorization of $\phi$
as the composition $U \stackrel{j}{\ra} C \stackrel{\xi}{\ra} H$ of an ordered embedding and an ordered
covering, there exists a unique ordered functor $\nu : \twH \ra C$ such that $j=i \nu$ and
$\pi = \nu \xi$.
\end{theorem}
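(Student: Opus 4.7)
The proof has two parts: (i) verifying that $\twH$ is an enlargement of $U$ via $i$, and (ii) establishing the universal property.

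For (i), the image $i(U_0) = \{e \otimes e\phi : e \in U_0\}$. For the order ideal axiom, if $f \otimes h \leq e \otimes e\phi$, unwinding the definition of $\leq$ on $U_0 \otimes \Omega H$ yields $x \leq e$ with $f \otimes h = x \otimes l$, $l \leq e\phi$, and $l \dom = x\phi$; uniqueness of restriction in $H$ forces $l = (x|e)\phi$, and the relation $\equiv_U$ then supplies $v \in U(f,x)$ with $h = (v(x|e))\phi$, so $(f,h) \equiv_U (e,e\phi)$ via $w = v(x|e) \in U(f,e)$. For the fullness axiom, an arrow $b = (k, e \otimes h)$ with $b\ran, b\dom \in i(U_0)$ yields $u \in U(e,f)$ with $h = u\phi$ and $u' \in U(e,f')$ with $hk^{-1} = u'\phi$; then $k = ((u')^{-1}u)\phi$ and a direct check shows $b = i((u')^{-1}u)$. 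For the third axiom, the arrow $(h, (e,h))^{-1}$ of $\twH$ has domain $e \otimes h$ and range $e \otimes e\phi \in i(U_0)$.

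For (ii), given a factorization $\phi = j\xi$ with $j$ an ordered embedding and $\xi$ an ordered covering, I would invoke Proposition \ref{cov_vs_act}: specifying an ordered functor $\nu: \twH \ra C$ satisfying $\nu\xi = \pi$ is equivalent to specifying an $H$-equivariant map of posets $\nu_0 : U_0 \otimes \Omega H \ra C_0$, and the requirement $i\nu = j$ pins down $\nu_0(e \otimes e\phi) = j(e)$ for all $e \in U_0$. I extend this by setting $\nu_0(e \otimes h) = \tilde{h}\ran$, where $\tilde{h}$ is the unique lift of $h$ to an arrow in $\st_C(j(e))$ supplied by $\xi$ (which is valid because $j(e)\xi = e\phi = h \dom$). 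The key well-definedness check is that if $h = (u\phi)k$ for some $u \in U(e,f)$, then $j(u)\tilde{k} \in \st_C(j(e))$ covers $h$, so by uniqueness of lifts $\tilde{h} = j(u)\tilde{k}$ and thus $\tilde{h}\ran = \tilde{k}\ran$. Order preservation follows from uniqueness of restriction in $C$ (the restriction $(j(x)|\tilde{k})$ lifts $(x\phi|k)$), and $H$-equivariance is automatic since $\widetilde{hm} = \tilde{h}\tilde{m}$. Uniqueness of $\nu$ then follows because $U_0 \otimes \Omega H$ is generated as an $H$-poset by $i(U_0)$, on which $\nu$ is forced by $i\nu = j$.

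The principal technical obstacle throughout is juggling three layers of equivalence — restriction in $H$ governing $\Omega H$, the relation $\equiv_U$ cutting $U_0 \boxtimes \Omega H$ down to $U_0 \otimes \Omega H$, and the covering-theoretic lifting in $C$ — where each step has to invoke the correct uniqueness property: star-injectivity of $\phi$ (for the enlargement axioms), uniqueness of restriction in $H$ and $C$, and star-bijectivity of $\xi$ (for the construction and properties of $\nu_0$).
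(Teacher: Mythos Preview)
Your part~(i) follows the paper's argument. One slip in the order-ideal check: from $f \otimes h = x \otimes x\phi$ with $x \leq e$ you conclude $(f,h) \equiv_U (e,e\phi)$ via $w = v(x|e) \in U(f,e)$, but since $e \in U_0$ is an identity, $(x|e)=x$ and hence $w = v \in U(f,x)$; there is in general no arrow $f \to e$ in $U$. This does not matter, because $f \otimes h = x \otimes x\phi \in i(U_0)$ is already exactly what the order-ideal condition requires, and that is where the paper's argument stops too.

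Your part~(ii) takes a genuinely different route from the paper. The paper defines $\nu$ directly on arrows of $\twH$: given $(h, e \otimes k)$ it lifts $k$ along $\xi$ to $c \in \st_C(ej)$, then lifts $h$ at $y=c\ran$ to obtain $q$, sets $(h, e \otimes k)\nu = q$, and verifies well-definedness with respect to $\equiv_U$, functoriality, order-preservation, and uniqueness one at a time. You instead invoke Proposition~\ref{cov_vs_act} to pass from $\cov(H)$ to $\act(H)$, reducing the construction of $\nu$ over $H$ to that of an $H$--equivariant poset map $\nu_0 : U_0 \otimes \Omega H \to C_0$, with the constraint $i\nu=j$ becoming the boundary condition $\nu_0(e \otimes e\phi)=j(e)$. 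This is tidier: functoriality of $\nu$ comes for free from the categorical equivalence, and uniqueness is the one-line observation that $U_0 \otimes \Omega H$ is the $H$--orbit of $i(U_0)$. The paper's direct approach is more self-contained and makes no appeal to Proposition~\ref{cov_vs_act}, which it stated but did not exploit here. Both arguments rest on the same underlying lift $\tilde h \in \st_C(j(e))$ supplied by the covering $\xi$; yours packages the remaining verifications more structurally, at the cost of depending on the equivalence of categories.
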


\[  \xymatrixcolsep{5pc}
\xymatrix{
& \twH \ar@/^1pc/[rd]^{\pi} \ar[dd]_(.3){\nu} & \\
U \ar@/^1pc/[ur]^{i} \ar[rr]_(.7){\phi}|(.5)\hole \ar@/_1pc/[dr]_j && H \\
& C \ar@/_1pc/[ur]_{\xi} &
} \]

\begin{proof}
We first show that $\twH$ is an enlargement of $Ui$.  To show that $(Ui)_0$ is an order ideal in
$(\twH)_0$, suppose that $f \otimes h \leq u\dom \otimes u\phi$.  Then $f \otimes h = x \otimes l$ with
$x \leq u \dom$ and $l \leq u\phi$.  Since $x\phi = l\dom$ it follows that $l = (x\phi|u \phi)=(x|u)\phi$
and hence that $f \otimes h = x \otimes (x|u)\phi \in (Ui)_0$.

Now suppose that $e \otimes k = (h,e \otimes k)\dom = u\dom \otimes u \phi$ and that
$e \otimes kh^{-1} = (h,e \otimes k)\ran = v\dom \otimes v \phi$ for some $u,v \in U$.  Hence there exist
$a \in U(u\dom,e)$ such that $u\phi = (a\phi)k$ and $b \in U(v\dom,e)$ such that $v\phi=(b\phi)kh^{-1}$.
Then 
\[ h = (v\phi)^{-1}(b\phi)k= (v\phi)^{-1}(b\phi)(a \phi)^{-1}(u \phi) = (v^{-1}ba^{-1}u)\phi \,.\]
Now $i: v^{-1}ba^{-1}u \mapsto (h,v\ran \otimes h)$, but 
\[ v\ran \otimes h = v\ran \otimes (v^{-1}ba^{-1}u)\phi = e \otimes (a^{-1}u)\phi = e \otimes k \]
and so $(h,e \otimes k) \in Ui$.

Now given $e \otimes k \in (\twH)_0$ the arrow $(k^{-1},e \otimes k\dom)$ has domain $e \otimes k$ and range
$e \otimes k \dom = e \otimes e \phi \in (Ui)_0$.

This completes the verification that $\twH$ is an enlargement of $Ui$.

Now suppose that we are given a factorization of $\phi$
as the composition $U \stackrel{j}{\ra} C \stackrel{\xi}{\ra} H$ of an ordered embedding and an ordered
covering.  For $(h,e \otimes k) \in \twH$ we have $k \in \st_H(e\phi) = \st_H(ej \xi)$, and since $\xi$ is a covering
there exists a unique $c \in \st_C(ej)$ with $c \xi = k$.  Let $y = c \ran$: then $y \xi = k \ran = h \ran$ and so
there exists a unique $q \in C$ with $q^{-1} \in \st_C(y)$ and $q \xi  = h$.  We define $(h,e \otimes k)\nu=q$.

This is well-defined since, if $e \otimes k = f \otimes l$ then there exists $u \in U(e,f)$ with $(u\phi)l=k$.  Then
the unique element of $\st_C(x)$ mapping to $l$ is $(uj)^{-1}c$ and we obtain $y = c\ran = ((uj)^{-1}c)\ran$ and hence
$q$ as before.

To show that $\nu$ is a functor, consider a composition $(h,e \otimes k)(l, e \otimes kl) = (hl, e \otimes kl) \in \twH$,
with $c,y,q$ defined as above, and $(H, e \otimes k)\nu = q$.  There exists a unique $d \in \st_C(y)$ with $d \xi = l$.
Then $(l,e \otimes kl)\nu=d$ and $(hl,e \otimes kl) = qd$, and so $\nu$ is a functor.  If $(h,e \otimes k) \leq (l,f \otimes m)$ then we have
$h \leq l$ and we may assume that $e \leq f$ and $k \leq m$.  We find a unique $s \in \st_C(fj)$ with $s \xi = m$.  Then 
$(ej|s)\xi \dom = e \phi$ and $(ej|s)\xi \leq s \xi = m$: hence $(ej|s)=c$.  Now there exists $t^{-1} in \st_C(s \ran)$ with $t\xi = l$ and
$(l, f \otimes m)\nu=t$.  Since $c \leq s$ we have $y \leq s \ran$ and $(t|y)\xi \leq l$.  Hence $(t|y)\xi=h$ and so $q = (t|y) \leq t$.  Hence
$\nu$ is an ordered functor.

To show that $\nu$ is unique with the stated properties, suppose that $\mu$ also possesses them.  Then for $(h, e \otimes k) \in \twH$,
we have
\[ h = (h, e \otimes k)\pi = (h, e \otimes k)\nu \xi  = (h, e \otimes k)\mu \xi \]
and since $\xi$ is a covering, if $\mu$ and $\nu$ agree on the identity  $e \otimes kh^{-1}$  then $(h,e \otimes k)\nu = (h,e \otimes k) \mu$.  But $\mu$ and $\nu$ agree on $(\twH)_0$:
for since $\iota \nu = j = \iota \mu$ we have that $\mu = \nu$ on $U \iota$, and then for any $z \in (\twH)_0$ we can join
$z$ to an $x \in (U\iota)_0$ by some $a \in \twH$.  But then $a \mu = a \nu$ and so $z \mu = z \nu$.
\end{proof}

\begin{cor}
Any ordered functor $\phi : G \ra H$ admits a canonical factorization as the composition of a fibration, followed by
an enlargement, followed by a covering.
\[ G \ra G \twobar \ker \phi \ra \twHpsi \ra H \,. \]
\end{cor}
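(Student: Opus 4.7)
The plan is to assemble the required factorization by stitching together the two main theorems already established in this section of the paper, with essentially no new work required. I would simply apply Theorem \ref{factor_as_fib_imm} first, then feed its star-injective output into the Maximum Enlargement Theorem.

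In more detail, I would begin by invoking Theorem \ref{factor_as_fib_imm} applied to $\phi : G \ra H$: this produces the canonical factorization $\phi = \varpi \psi$ where $\varpi : G \ra G \twobar \ker \phi$ is the quotient map and $\psi : G \twobar \ker \phi \ra H$ is a star-injective ordered functor. The map $\varpi$ is a fibration by Theorem \ref{quot_is_ogpd}, so the first factor of the desired chain is in hand.

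Next I would apply the Maximum Enlargement Theorem to the star-injective (i.e.\ immersion) functor $\psi$, taking $U = G \twobar \ker \phi$ in the statement of the theorem. This produces a semidirect product $\twHpsi = H \ltimes (U_0 \otimes \Omega H)$ together with an ordered embedding $i : G \twobar \ker \phi \ra \twHpsi$ which is an enlargement, and a covering $\pi : \twHpsi \ra H$, satisfying $\psi = i \pi$. Substituting into the earlier decomposition gives
\[ \phi = \varpi \psi = \varpi \circ i \circ \pi, \]
which is precisely the claimed factorization through $G \twobar \ker \phi \ra \twHpsi \ra H$.

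There is no real obstacle here: the corollary is a formal consequence of chaining Theorem \ref{factor_as_fib_imm} (giving the fibration and immersion) with the Maximum Enlargement Theorem (resolving the immersion as an enlargement followed by a covering). The canonicity of the resulting factorization follows from the canonicity of each of the two constituent factorizations: $\varpi$ is determined by the normal ordered subgroupoid $\ker \phi$, and the universal property in the statement of the Maximum Enlargement Theorem pins down $\twHpsi$ and the factors $i, \pi$ up to unique isomorphism.
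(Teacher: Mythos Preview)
Your proposal is correct and matches the paper's intended argument: the corollary is stated without proof precisely because it follows immediately by chaining Theorem~\ref{factor_as_fib_imm} (yielding the fibration $\varpi$ and the immersion $\psi$) with the Maximum Enlargement Theorem applied to $\psi$ (yielding the enlargement $i$ and covering $\pi$). There is nothing to add.
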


\section{The fibration theorem and the derived ordered groupoid}
\label{fibthm}
In \cite{St} Steinberg introduced a construction that he called the {\em derived ordered groupoid}
$\der(\phi)$ of a mapping $\phi : G \ra H$ of ordered groupoids.  Amongst its applications is the 
{\em Fibration Theorem} \cite[Theorem 5.1]{St}:
any mapping of ordered groupoids can be factorized as the composition of an enlargement followed by a fibration. 
The following approach to $\der(\phi)$ is based on a construction from topology -- the {\em mapping cocylinder} -- that is used to prove a similar
theorem: every continuous map of topological spaces can be factorised as the composition of a homotopy equivalence followed by a 
fibration (see for example \cite[Theorem I.7.30]{WhBook}, where the construction is called the {\em mapping path space}.).  Steinberg constructs the mapping cocylinder indirectly, as a semidirect product $H \ltimes \der(\phi)$.
We give a direct account of Steinberg's factorization, following but slightly simplifying the formulation in \cite{LMP}.

We start with the mapping groupoid $\ogpd(\I,H)$.  Recall that this is a groupoid
whose objects are the groupoid maps $\I \ra H$: however, any such map is completely determined by the image of the arrow $\iota$
of $\I$, and so we may identify the objects of $\ogpd(\I,H)$ with the poset $\Omega H$ of arrows of $H$.  An arrow in
$\ogpd(\I,H)$ is  a natural transformation between maps $\I \ra H$.  Identifying two such maps with arrows $h_0, h_1 \in \Omega H$, a
natural transformation from $h_0$ to $h_1$ is then a pair of arrows $(t_0,t_1)$ of $H$ such that the square
\begin{equation}
\label{commsq}
\xymatrixcolsep{3pc}
\xymatrix{
\bullet \ar[d]_{t_0} \ar[r]^{h_0} & \bullet \ar[d]^{t_1}\\
\bullet \ar[r]_{h_1} & \bullet}
\end{equation}
commutes.  Now any three of the arrows in the square determines the fourth.  We choose to suppress $t_1$: rewriting $t_0$ as $t$, the arrow in $\ogpd(\I,H)$  in \eqref{commsq} will be
identified with the triple $[h_0,t,h_1]$ of arrows in $H$, which are subject to the condition that the composition $h_0^{-1}th_1$ exists.
Then we have
\[ [h_0,t,h_1] \dom =h_0, \; [h_0,t,h_1] \ran = h_1 \,, \]
and the composition of $[h_0,t,h_1]$ and $[h_1,u,s_1]$ is defined to be
\[ [h_0,t,h_1][h_1,u,s_1] = [h_0,tu,s_1].\]

\begin{prop}
\label{ep0isfib}
For $i=1,2$ the ordered functors $\ep_i: \ogpd(\I,H) \ra H$ given by 
\begin{equation*}
\ep_0: \begin{cases} h  \mapsto h \dom & \text{if $h \in \Omega H$}\\ [h_0,t,h_1] \mapsto t & \end{cases}
\end{equation*}
and
\begin{equation*}
\ep_1: \begin{cases} h  \mapsto h \ran & \text{if $h \in \Omega H$}\\ [h_0,t,h_1] \mapsto h_o^{-1}th_1 & \end{cases}
\end{equation*}
are strong fibrations.
\end{prop}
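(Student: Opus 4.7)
The plan is to exhibit an explicit lift $\widetilde{F}$ in each homotopy lifting square. I will treat $\epsilon_0$ in detail; $\epsilon_1$ then follows either by an analogous construction, or by reduction to $\epsilon_0$ using the involution of $\ogpd(\I,H)$ that interchanges the two identities of $\I$ and swaps $\epsilon_0$ with $\epsilon_1$.

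Given an HLP square with $f:A\to \ogpd(\I,H)$ and $F:A\times\I\to H$ satisfying $i_0F=f\epsilon_0$, write $h_x=f(x)\in\Omega H$ for each $x\in A_0$. Commutativity forces $h_x\dom=F(x,0)$ for all $x$ and $f(a)=[h_x,F(a,0),h_y]$ for every arrow $a:x\to y$ of $A$. I define $\widetilde F:A\times\I\to \ogpd(\I,H)$ on objects by $\widetilde F(x,0)=h_x$ and $\widetilde F(x,1)=F(x,1)$, treating the identity $F(x,1)\in H_0$ as a degenerate arrow of $H$, hence an object of $\ogpd(\I,H)$. On generating arrows I set
\[
\widetilde F(a,0)=f(a), \qquad \widetilde F(x,\iota)=[h_x,F(x,\iota),F(x,1)], \qquad \widetilde F(a,1)=[F(x,1),F(a,1),F(y,1)],
\]
and extend to a general arrow $(a,\iota)$ via the factorisation $(a,\iota)=(a,0)(y,\iota)=(x,\iota)(a,1)$ in $A\times\I$; the two resulting compositions in $\ogpd(\I,H)$ agree by functoriality of $F$. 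Every triple that appears is legitimate; for example $[h_x,F(x,\iota),F(x,1)]$ is well-formed because $F(x,\iota)\dom=h_x\dom$ and $F(x,\iota)\ran=F(x,1)\dom$.

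The conditions $\widetilde F\epsilon_0=F$ and $i_0\widetilde F=f$ are immediate from inspection of the middle entries and of the definition of $\widetilde F$ on $A\times\{0\}$ respectively, and order-preservation of $\widetilde F$ reduces to that of $f$ and $F$ applied componentwise to the triples. For $\epsilon_1$ the same template works once $\widetilde F(x,\iota)$ is replaced by $[h_x,h_xF(x,\iota),F(x,1)]$, so that $\epsilon_1$ of this triple evaluates to $h_x^{-1}\cdot h_xF(x,\iota)\cdot F(x,1)=F(x,\iota)$. The point requiring the most care is the choice of $\widetilde F(x,1)$ together with the implicit right-hand edges of the squares representing $\widetilde F(x,\iota)$: taking $\widetilde F(x,1)$ to be the identity at $F(x,1)$ forces those right edges to be trivial and makes all remaining verifications routine, whereas a less canonical choice would demand tracking an additional family of auxiliary arrows---the principal technical obstacle that this choice is designed to evade.
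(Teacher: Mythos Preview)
Your proof is correct and follows the same approach as the paper: construct an explicit lift $\widetilde{F}$ by hand. For $\ep_0$ your construction, once composed along $(a,\iota)=(a,0)(y,\iota)$, yields exactly the paper's formula $[h_x,\,t_a l_y,\,l_y\ran]$; for $\ep_1$ you make a different but equally valid choice of lift (sending $(y,1)$ to the identity at $F(y,1)$, giving $[h_x,\,t_a h_y l_y,\,l_y\ran]$, whereas the paper sends $(y,1)$ to $h_y l_y$ and obtains $[h_x,\,t_a,\,h_y l_y]$), and your alternative reduction of $\ep_1$ to $\ep_0$ via the involution on $\I$ is also sound, though the paper does not mention it.
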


\begin{proof}
We first consider $\ep_0$.  Suppose that, in a commutative square
\[ \xymatrixcolsep{3pc}
\xymatrix{
A \ar[d]_{i_0} \ar[r]^{f} & \ogpd(\I,H) \ar[d]^{\ep_0}\\
A \times \I \ar[r]_{F} & H}
\]
we have, for $x \in A_0$, $xf=h_x \in \Omega H$. and that if $a \in A$ with $a \dom =x$ and $a \ran = y$ then $af=[h_x,t_a,h_y]$. 
Suppose further that $(y,\iota)F=l_y$.  Then
\[ l_y \dom = (y,\iota)F \dom = (y,\iota)\dom F = (y,0)F = yf\ep_0 = h_y \dom = t_a \ran. \]
Define $\widetilde{F_0}$ by:
\begin{equation*}
\widetilde{F_0}:
\begin{cases}
(x,0)  \mapsto h_x \\
(y,1)  \mapsto l_y \ran \\
(a,\iota)  \mapsto [h_x,t_al_y,l_y \ran].  \end{cases}
\end{equation*}
Certainly this makes
\[ \xymatrixcolsep{3pc}
\xymatrix{
A \ar[d]_{i_0} \ar[r]^{f} & \ogpd(\I,H) \ar[d]^{\ep_0}\\
A \times \I \ar[r]_{F} \ar[ur]_{\widetilde{F_0}} & H}
\]
commute, and it is easy to check that this defines an ordered functor.

The argument for $\ep_1$ is similar.  In a commutative square
\[ \xymatrixcolsep{3pc}
\xymatrix{
A \ar[d]_{i_0} \ar[r]^{f} & \ogpd(\I,H) \ar[d]^{\ep_1}\\
A \times \I \ar[r]_{F} & H}
\]
with $af=[h_x,t_a,h_y]$ as above, and with $(y,\iota)F=l_y$, we have
\[ l_y \dom = (y,\iota)F \dom = (y,\iota)\dom F = (y,0)F = yf\ep_1 = h_y \ran. \] 
leading to the definition of a homotopy lifting $\widetilde{F_1} : A \times \I \ra \ogpd(\I,H)$
by $(a,\iota)\widetilde{F_1} = [h_x,t_a,h_yl_y]$.
\end{proof}

Given $\phi : G \ra H$ its {\em mapping cocylinder} $M^{\phi}$ is the pullback of the diagram
\[ \xymatrixcolsep{3pc}
\xymatrix{
 & \ogpd(\I,H) \ar[d]^{\ep_0}\\
G \ar[r]_{\phi} & H}
\]
that is
\[ M^{\phi} = \{ (a,[h_0,a\phi,h_1]) : a \in G, h_0,h_1 \in H, \; h_0^{-1}(a\phi)h_1 \; \text{exists} \,\}. \]
We abbreviate the $4$--tuple $(a,[h_0,a\phi,h_1])$ to the triple $\<h_0,a,h_1\>$ so that now
\[ M^{\phi} = \{ \<h_0,a,h_1\> : a \in G, h_0,h_1 \in H, h_0^{-1}(a \phi)h_1 \; \text{exists} \}. \]
Then $M^{\phi}$ is an ordered groupoid with $M^{\phi}_0$ equal to the pullback
\[ \{ (e,h) \in G_0 \times H : e \phi = h \dom \} \,. \]
We have
\[ \<h_0,a,h_1\> \dom = (a \dom, h_0), \; \<h_0,a,h_1\>\ran = (a \ran, h_1) \]
and if $(a \ran, h_1) = (b \dom,k_0)$ then the composition of $\<h_0,a,h_1\>$ and
$\<k_0,b,k_1\>$ is given by
\[ \<h_0,a,h_1\>\<k_0,b,k_1\> = \< h_0,ab,k_1 \>. \]
The ordering on arrows in $M^{\phi}$ is componentwise:
\[ \<h_0,a,h_1\> \leq \<k_0,b,k_1\> \; \iff \; h_0 \leq k_0, a \leq b, \; \text{and} \; h_1 \leq k_1 \,.\]

We can now complete our account of Steinberg's Fibration Theorem \cite[Theorem 5.1]{St}, which follows that given
in \cite[Theorem 4.9]{LMP}.

\begin{theorem}
\label{st_fib_thm}
 Let $\phi : G \ra H$ be a morphism of ordered groupoids.  Then $\phi$ admits a factorization
\[ \xymatrixcolsep{3pc}
\xymatrix{G \ar[r]^{i_{\phi}} \ar@/_2pc/[rr]_{\phi} & M^{\phi} \ar[r]^{p_{\phi}} & H}
\]
where $i_{\phi} : g \mapsto \< (g \dom) \phi ,g,(g\ran) \phi \>$ and $M^{\phi}$ is an enlargement of
$Gi_{\phi}$, and $p_{\phi} : \< h_0,a,h_1 \> \mapsto h_0^{-1}(a \phi)h_1$ is a strong fibration.
\end{theorem}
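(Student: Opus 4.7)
The plan is to verify in turn: (i) that $i_{\phi}$ is an ordered functor satisfying $i_{\phi} p_{\phi} = \phi$; (ii) that $M^{\phi}$ is an enlargement of $G i_{\phi}$; and (iii) that $p_{\phi}$ has the ordered homotopy lifting property. Part (i) is routine, since $(g\dom)\phi$ and $(g\ran)\phi$ are identities in $H$, so $g i_{\phi} p_{\phi} = ((g\dom)\phi)^{-1}(g\phi)((g\ran)\phi) = g\phi$, while functoriality and order-preservation of $i_{\phi}$ follow at once from the componentwise structure of $M^{\phi}$.

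For (ii), I would verify the three enlargement axioms directly. The order-ideal condition reduces to the observation that if $(f, k) \leq (e, e\phi)$ in $M^{\phi}_0$, then $k \leq e\phi \in H_0$ satisfies $k\dom = f\phi$, so uniqueness of restriction forces $k = f\phi$ and $(f, k) \in (G i_{\phi})_0$. The fullness condition is immediate: if both endpoints of $\langle h_0, a, h_1 \rangle$ lie in $(G i_{\phi})_0$, then $h_0 = (a\dom)\phi$ and $h_1 = (a\ran)\phi$, so the arrow equals $a i_{\phi}$. For the coverage condition, given $(e, h) \in M^{\phi}_0$, the arrow $\langle h, e, e\phi \rangle$ has domain $(e, h)$ and range $(e, e\phi) \in (G i_{\phi})_0$.

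The main work lies in (iii). Given a commutative square with $f : A \to M^{\phi}$ and $F : A \times \I \to H$, write $xf = (e_x, h_x)$ for each $x \in A_0$ and $af = \langle h_x, a_G, h_y \rangle$ for each $a : x \to y$ in $A$, where $a_G \in G$ is the underlying arrow. Setting $l_y = (y, \iota) F$, commutativity of the outer square combined with the decomposition $(a, \iota) = (a, 0)(a\ran, \iota)$ and functoriality of $F$ yield
\[ (a, \iota) F = (a, 0) F \cdot l_y = h_x^{-1} (a_G \phi) h_y l_y, \qquad l_y \dom = h_y \ran. \]
This motivates the definition
\[ (y, 1) \widetilde{F} = (e_y, h_y l_y), \quad (a, 0) \widetilde{F} = af, \quad (a, \iota) \widetilde{F} = \langle h_x, a_G, h_y l_y \rangle, \]
with $(a, 1) \widetilde{F} = \langle h_x l_x, a_G, h_y l_y \rangle$ then forced by functoriality. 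By construction $i_0 \widetilde{F} = f$ and $\widetilde{F} p_{\phi} = F$.

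The fiddliest step will be checking that $\widetilde{F}$ is an ordered functor. This requires verifying that each triple genuinely lies in $M^{\phi}$ (i.e.\ that the appropriate composites in $H$ exist, which follows by tracking domains and ranges), that $\widetilde{F}$ respects both decompositions $(a, \iota) = (a, 0)(a\ran, \iota) = (a\dom, \iota)(a, 1)$ and horizontal composition $(ab, 0)$, and that it is order-preserving. Each of these reduces to a direct computation using functoriality and order-preservation of $F$ together with the componentwise structure of $M^{\phi}$; the chief obstacle is simply the bookkeeping across the several cases of arrows in $A \times \I$, rather than any single conceptually difficult step.
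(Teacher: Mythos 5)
Your proposal is correct and takes essentially the same route as the paper's proof: the same lifting formula $(a,\iota)\widetilde{F} = \langle h_x, a_G, h_y l_y\rangle$ derived from the decomposition $(a,\iota)=(a,0)(a\ran,\iota)$ together with the computation $l_y\dom = h_y\ran$, and the same three enlargement verifications (order ideal, fullness, and the arrow $\langle h,e,e\phi\rangle$ joining $(e,h)$ to $(Gi_{\phi})_0$). The remaining functoriality and order bookkeeping you defer is likewise left unverified in detail in the paper, so nothing essential is missing.
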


\begin{proof}
It is clear that $i_{\phi}$ and $p_{\phi}$ are morphisms of ordered groupoids and that $\phi = i_{\phi}p_{\phi}$.

Suppose we are given a commutative square
\[ \xymatrixcolsep{3pc}
\xymatrix{
A \ar[d]_{i_0} \ar[r]^{f} & M^{\phi} \ar[d]^{p_{\phi}}\\
A \times \I \ar[r]_{F} & H}
\]
For an arrow $a \in A$, let $af= \< h_a,g_a,k_a \> \in M^{\phi}$ so that $(a,0)F = h_a^{-1}(g_a\phi)k_a$.
Suppose that $(a \ran,\iota)F = l_{a \ran} \in H$. Then 
\[ l_{a \ran} \dom = (a \ran, \iota)F \dom 
= (a \ran,\iota)\dom F  = (a \ran,0)F = (a \ran)f p_{\phi} = (afp_\phi) \ran = k_a \ran \,. \]
Hence $k_a$ and $l_{a \ran}$ are composable arrows in $H$, and we define
\[ (a,\iota) \widetilde{F} = \< h_a,g_a,k_al_{a \ran} \> \,. \]
This is an ordered groupoid morphism that lifts $F$.

Now $Gi_{\phi} = \{ \< (g \dom) \phi ,g,(g\ran) \phi \> : g \in G \}$ is a subgroupoid of $M^{\phi}$,
with $(Gi_{\phi})_0 = \{ (e,e\phi) : e \in G_0 \}$.  Suppose that $(x,h) \in M^{\phi}_0$ and that
$(x,h) \leq (e,e\phi)$.  Then $x \phi = h \dom, x \leq e$ and $h \leq e\phi$.  The last condition implies that
$h \in H_0$, whence $h=x \phi$ and so $(x,h) \in (Gi_{\phi})_0$.  We have shown that $(Gi_{\phi})_0$ is an order ideal
in $M^{\phi}_0$.

Now suppose that $\<h_0,a,h_1\> \in M^{\phi}$ and that $(a \dom,h_0)=(x,x\phi)$ and $(a\ran,h_1)=(y,y\phi)$
with $x,y \in G_0$.  Then $\<h_0,a,h_1\>=\< (a\dom)\phi,a,(a \ran)\phi \> \in Gi_{\phi}$, which establishes 
the second condition for an enlargement.  Finally, take $(e,h) \in M^{\phi}_0$.  Then $\< h,e,e\phi\>\dom =(e,h)$ and 
$\<h,e,e\phi\>\ran = (e,e\phi) \in (Gi_{\phi})_0.$.  This concludes the proof that $M^{\phi}$ is an enlargement
of $Gi_{\phi}$.
\end{proof}

\begin{cor}
 Let $\phi : G \ra H$ be a morphism of ordered groupoids.  Then $\phi$ admits a factorization
\[ \xymatrixcolsep{3pc}
\xymatrix{G \ar[r]^{i_{\phi}} \ar@/_3pc/[rrr]_{\phi} & M^{\phi} \ar[r]^(.4){\varpi} & M^{\phi} \twobar \ker p_{\phi} \ar[r] & H}
\]
 as an enlargement followed by a strong fibration followed by a covering.
\end{cor}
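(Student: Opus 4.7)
The plan is to assemble three prior results into a single factorization. First, apply Theorem \ref{st_fib_thm} to $\phi$ to obtain $\phi = i_{\phi} p_{\phi}$, where the embedding $i_{\phi} : G \to M^{\phi}$ realizes $M^{\phi}$ as an enlargement of $G i_{\phi}$, and where $p_{\phi} : M^{\phi} \to H$ is a strong fibration.

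Next, apply the canonical factorization of Theorem \ref{factor_as_fib_imm} to $p_{\phi}$. This produces $p_{\phi} = \varpi \psi$, where $\varpi : M^{\phi} \to M^{\phi} \twobar \ker p_{\phi}$ is the quotient fibration guaranteed by Theorem \ref{quot_is_ogpd} and $\psi : M^{\phi} \twobar \ker p_{\phi} \to H$ is star-injective. Substituting yields $\phi = i_{\phi} \varpi \psi$, the desired three-term factorization.

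It remains to upgrade the two middle factors. Because $p_{\phi}$ is a strong fibration and factors as the composite $\varpi \psi$ of an ordered functor followed by an immersion, Proposition \ref{fib_and_immer} applies directly and gives that $\varpi$ is itself a strong fibration. For the last arrow, $p_{\phi}$ is in particular a fibration, so Corollary \ref{fib_as_fib_cov} implies that $\psi$ is a covering.

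This completes the outline: the main point to check is that the quotient construction $M^{\phi} \twobar \ker p_{\phi}$ is well-behaved, which is supplied by Lemma \ref{ker_is_normal} (applied to $p_{\phi}$) together with Theorem \ref{quot_is_ogpd}. I do not expect a genuine obstacle here — every ingredient is already in place — but the one place that requires mild care is verifying that Proposition \ref{fib_and_immer} is applicable, namely that $\psi$ is indeed an immersion (star-injective), which is precisely the conclusion of Theorem \ref{factor_as_fib_imm}.
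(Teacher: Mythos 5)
Your proposal is correct and follows essentially the same route as the paper: factor $p_{\phi}$ through $M^{\phi} \twobar \ker p_{\phi}$ via Corollary \ref{fib_as_fib_cov} (built on Theorem \ref{factor_as_fib_imm}) to get the covering, and invoke Proposition \ref{fib_and_immer} to conclude that $\varpi$ is a strong fibration. Your added check that $\psi$ is star-injective (so that Proposition \ref{fib_and_immer} applies) is exactly the implicit point in the paper's two-line proof.
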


\begin{proof}
Use Corollary \ref{fib_as_fib_cov} to factorize $p_{\phi}$ through $M^{\phi} \twobar \ker p_{\phi}$.  The quotient map
$\varpi: M^{\phi} \ra M^{\phi} \twobar \ker p_{\phi}$ is a strong fibration by Proposition \ref{fib_and_immer}.
\end{proof}

\begin{cor}
\label{ker_is_der}
 The kernel of the fibration $p_{\phi}$ is equal to Steinberg's derived ordered groupoid $\der(\phi)$.
\end{cor}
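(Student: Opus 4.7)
The plan is to unpack both sides of the asserted equality and exhibit a natural isomorphism of ordered groupoids.

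First I would describe $\ker p_{\phi}$ explicitly. An arrow $\<h_0,a,h_1\> \in M^{\phi}$ lies in $\ker p_{\phi}$ exactly when the element $h_0^{-1}(a\phi)h_1 \in H$ is an identity, which (since this composite exists in $H$) is equivalent to $h_0 = (a\phi)h_1$. Thus $\ker p_{\phi}$ consists of the arrows of the form $\<(a\phi)h_1,a,h_1\>$, parametrised by pairs $(a,h_1)$ with $a \in G$, $h_1 \in H$, and $(a\ran)\phi = h_1 \dom$. This is precisely the underlying set of the derived ordered groupoid $\der(\phi)$ as defined by Steinberg, where an arrow is recorded as a pair consisting of an arrow of $G$ together with a compatible arrow of $H$ based at its range.

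Next I would match the groupoid structure. Under the parametrisation $(a,h_1) \leftrightarrow \<(a\phi)h_1,a,h_1\>$, the domain $((a \dom),(a\phi)h_1)$ and the range $((a\ran),h_1)$ translate directly into the data attached to an arrow of $\der(\phi)$. The composition formula in $M^{\phi}$, applied to kernel elements $\<(a\phi)h_1,a,h_1\>$ and $\<(b\phi)k_1,b,k_1\>$ with $(a\ran,h_1)=(b\dom,(b\phi)k_1)$, produces $\<(a\phi)(b\phi)k_1,ab,k_1\> = \<((ab)\phi)k_1,ab,k_1\>$, which coincides with Steinberg's multiplication in $\der(\phi)$. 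The inverse of $\<(a\phi)h_1,a,h_1\>$ is $\<h_1, a^{-1},(a\phi)h_1\> = \<((a^{-1})\phi)(a\phi)h_1,a^{-1},(a\phi)h_1\>$, again matching.

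Then I would address the order. The order on $M^{\phi}$ is componentwise, so $\<(a\phi)h_1,a,h_1\> \leq \<(b\phi)k_1,b,k_1\>$ holds iff $a \leq b$ and $h_1 \leq k_1$ (the inequality $(a\phi)h_1 \leq (b\phi)k_1$ then follows automatically from axiom OG2 in $H$). This is exactly the componentwise order by which $\der(\phi)$ is defined. The restrictions given by axiom OG3 are inherited and translate correctly under the parametrisation, so the bijection is an isomorphism of ordered groupoids.

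The only real content is the calculation in the first paragraph; once one sees that the kernel condition $h_0 = (a\phi)h_1$ turns $\<h_0,a,h_1\>$ into a pair $(a,h_1)$ with the compatibility $(a\ran)\phi = h_1 \dom$, the remaining verifications are routine matchings of formulas. The main obstacle is purely notational, namely reconciling Steinberg's presentation of $\der(\phi)$ in \cite{St} with the triple notation used above for arrows of $M^{\phi}$.
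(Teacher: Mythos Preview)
Your proposal is correct and follows essentially the same route as the paper: compute the kernel condition $h_0^{-1}(a\phi)h_1 \in H_0$ to obtain $h_0=(a\phi)h_1$, reparametrise kernel arrows by pairs $(a,h_1)$ with $(a\ran)\phi = h_1\dom$, and verify that the resulting domain, range, composition and order agree with Steinberg's $\der(\phi)$. Your version is slightly more explicit in checking the order (noting that the first-component inequality is redundant by OG2), but otherwise the argument is the same.
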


\begin{proof}
We have
\begin{align*}
\ker p_{\phi} &= \{ \<h_0,a,h_1\>: h_0^{-1}(a \phi)h_1 \in H_0 \} \\
&= \{ \<h_0,a,h_1\> : h_0=(a\phi)h_1 \}.
\end{align*}
We can suppress mention of $h_0$ in elements of $\ker p_{\phi}$, and so write
$\ker p_{\phi} = \{ (a,h) : (a \phi)\ran=h \dom \}$.
In this notation $(\ker p_{\phi})_0 = \{ (e,h) : e\phi=h\dom \}$ with
$(a,h)\dom = (a\dom, (a \phi)h)$ and $(a,h)\ran = (a\ran,h)$.  The composition of
$(a,h)$ and $(b,k)$, defined when $(a \ran,h) = (b \dom, (b \phi)k)$, is then given by
$(a,h)(b,k)=(ab,k)$.  With due allowance for the change of notation required by Steinberg's use of
left actions of groupoids, this is precisely $\der(\phi)$.
\end{proof}

Now $H$ acts on $\der(\phi)$: we have $\omega: (a,h) \mapsto h \ran$, so that $(a,h) \lhd h'$ is defined when $hh'$ exists, and then
$(a,h) \lhd h' = (a,hh')$.

\begin{cor}[\cite{LMP}, Proposition 4.9]
\label{M_is_sdp}
The mapping cocylinder $M^{\phi}$ is isomorphic to the semidirect product $H \ltimes \der(\phi)$.
\end{cor}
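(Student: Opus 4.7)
The plan is to write down mutually inverse ordered functors between $M^\phi$ and $H \ltimes \der(\phi)$. Using the explicit description of $\der(\phi)$ from Corollary \ref{ker_is_der} and the right $H$--action recorded immediately before this corollary, I would define
\[ \Phi : M^\phi \ra H \ltimes \der(\phi), \quad \<h_0, a, h_1\> \mapsto \big( h_0^{-1}(a\phi)h_1,\, (a, h_1) \big) \]
and its candidate inverse
\[ \Psi : H \ltimes \der(\phi) \ra M^\phi, \quad \big( g, (a,h) \big) \mapsto \<\,(a\phi)hg^{-1},\, a,\, h\,\>. \]
The first task is to check both maps land in the stated targets. For $\Phi$, composability of $\<h_0, a, h_1\>$ already gives $(a\phi)\ran = h_1 \dom$, so $(a, h_1) \in \der(\phi)$; moreover $(a,h_1)\omega = h_1\ran = (h_0^{-1}(a\phi)h_1)\ran$, confirming the pullback condition defining $H \ltimes \der(\phi)$. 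For $\Psi$, the relation $h\ran = g\ran$ required of arrows in $H \ltimes \der(\phi)$ ensures that $(a\phi)hg^{-1}$ is defined and that $h_0^{-1}(a\phi)h = g$ for $h_0 = (a\phi)hg^{-1}$.

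Next I would verify that $\Phi$ is a functor. Preservation of domain and range is a direct calculation using the semidirect product formulas, resting on the identity $(a\phi)(a\phi)^{-1}h_1 = h_1$, which holds because $h_1\dom = (a\ran)\phi$. For composition, take a composable pair $\<h_0, a, h_1\>\<h_1, b, k_1\> = \<h_0, ab, k_1\>$ in $M^\phi$: on the image side, a short calculation using $(g,x)(g',x') = (gg',(x \lhd g')x')$ and the action $(a,h) \lhd h' = (a, hh')$ collapses the second coordinate via $(a,h_1)\lhd h_1^{-1}(b\phi)k_1 = (a, (b\phi)k_1)$, which in $\der(\phi)$ composes with $(b,k_1)$ to yield $(ab, k_1)$; the first coordinate simplifies to $h_0^{-1}((ab)\phi)k_1$, as required. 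Order preservation is immediate because both groupoids carry componentwise orderings.

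Finally, direct substitution gives $\Phi\Psi = \id$ and $\Psi\Phi = \id$: the pairs $h_0 \leftrightarrow (a\phi)h_1 g^{-1}$ and $g \leftrightarrow h_0^{-1}(a\phi)h_1$ are mutually inverse whenever the other coordinates match, and the verification reduces to routine algebraic identities in $H$. The main obstacle here is purely notational: one must keep track of the semidirect-product conventions, especially the formula $(g, x)\dom = (g\dom, (x \lhd g^{-1})\dom)$, while translating between the triple notation for $M^\phi$ and the pair notation for $H \ltimes \der(\phi)$; once the formulas are aligned, no conceptual step remains.
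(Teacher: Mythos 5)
Your proposal is correct and follows essentially the same route as the paper: your $\Phi$ and $\Psi$ are exactly the paper's $\gamma^{-1}$ and $\gamma$, and the verifications (composability, functoriality via the semidirect product formulas and the action $(a,h)\lhd h' = (a,hh')$, mutual inverseness) match the paper's argument, differing only in which direction the functoriality check is carried out.
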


\begin{proof}
 We define $\gamma: H \ltimes \der(\phi) \ra M^{\phi}$ by $(k,(g,h)) \mapsto \< (g \phi)hk^{-1},g,h \>$.
Now the composition of $(k,(g,h))$ and $(l,(p,q))$ in $H \ltimes \der(\phi)$ is defined if and only if
$k \ran = l \dom, g \ran = p \dom$ and $h=(p \phi)ql^{-1}$, and then we have
\[ (k,(g,h))(l,(p,q)) = (kl,(g,hl)(p,q)) = (kl,(gp,q)). \]
Then
\begin{align*}
(k,(g,h)) \gamma (l,(p,q)) \gamma &= \< (g \phi)hk^{-1},g,h \> \< (p \phi)ql^{-1},p,q\> \\
&= \< (g \phi)hk^{-1},g,h \> \< h,p,q\> \\
&= \< (g \phi)hk^{-1},gp,q \> = \< (g \phi)(p \phi)ql^{-1}k^{-1},gp,q \> \\
&= (kl,(gp,q)) \gamma.
\end{align*}
It follows that $\gamma$ is a functor, and $\gamma$ is then easily seen to be an isomorphism of ordered groupoids, with inverse
\[ \gamma^{-1} : \< h_0,a,h_1 \> \mapsto (h_0^{-1}(a \phi)h_1, (a,h_1))\,.\]
\end{proof}

\begin{remark}
Under the isomorphism of Corollary \ref{M_is_sdp}, the mapping $p_{\phi}: M^{\phi} \ra H$ corresponds to the projection
$\pi : H \ltimes \der(\phi) \ra H$ and the lifting $\widetilde{F}$ described in the proof of Theorem \ref{st_fib_thm} is the same as that
given in Lemma \ref{sdp_fib}.
\end{remark}

Take $\phi$ to be the inclusion $H_0 \hookrightarrow H$.  Then $\der(\phi)$ will be the analogue of the loop
space $\Omega H$ of $H$. We then have
\[ \Omega H = \der(\phi) = \{ (h \dom,h) : h \in H \} \]
and so $\Omega H$ is identified as the trivial groupoid on the set of arrows of $H$, ordered as a partially ordered
set by the ordering on $H$.  This is the groupoid $H_D$ of \cite{St}.

\begin{prop}
\label{loops_on_H}
The poset $\Omega H$ is naturally isomorphic to $\ker \ep_0 \cap \ker \ep_1$.
\end{prop}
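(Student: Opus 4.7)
The plan is to compute $\ker \ep_0 \cap \ker \ep_1$ directly and show it consists of nothing but identity arrows of $\ogpd(\I,H)$, whose underlying object set is precisely the poset $\Omega H$. So the isomorphism will simply send each $h \in \Omega H$ to the identity arrow at the object $h$ of $\ogpd(\I,H)$.

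First I would unpack the definitions. A general arrow of $\ogpd(\I,H)$ has the form $[h_0,t,h_1]$ subject to the commuting square condition, so that $h_0^{-1}th_1$ exists; and by Proposition \ref{ep0isfib} the two face maps act by $\ep_0([h_0,t,h_1]) = t$ and $\ep_1([h_0,t,h_1]) = h_0^{-1}th_1$. Identities of $\ogpd(\I,H)$ are exactly the elements $h \in \Omega H$, and for such $h$ we have $\ep_0(h) = h\dom \in H_0$ and $\ep_1(h) = h\ran \in H_0$, so every identity automatically lies in $\ker \ep_0 \cap \ker \ep_1$.

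Next I would show that these identities exhaust the intersection. Suppose $[h_0,t,h_1] \in \ker \ep_0$: then $t \in H_0$, and composability forces $t = h_0\dom = h_1\dom$, so commutativity of the defining square reduces to $h_1 = t h_1 = h_0(h_0^{-1}h_1)$. Imposing $[h_0,t,h_1] \in \ker \ep_1$ then demands $h_0^{-1}h_1 \in H_0$, whence $h_1 = h_0$. Thus $[h_0,t,h_1]= [h_0,h_0\dom,h_0]$, which is precisely the identity arrow at $h_0 \in \Omega H$. Hence $\ker \ep_0 \cap \ker \ep_1$ is the trivial ordered subgroupoid of $\ogpd(\I,H)$ on the object set $\Omega H$.

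To finish, I would observe that the object set $\obj \ogpd(\I,H)$ carries by definition the ordering of $\Omega H$ (the arrow-poset of $H$), and the identities of an ordered groupoid inherit the partial order of the object poset. Therefore the bijection $h \mapsto \id_h$ is an isomorphism of posets $\Omega H \to \ker \ep_0 \cap \ker \ep_1$, natural in $H$. The only nontrivial step is the algebraic collapse $h_0 = h_1$; the order-preservation and naturality are formal.
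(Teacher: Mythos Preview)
Your proof is correct and follows essentially the same route as the paper's: compute $\ker\ep_0$ by forcing $t\in H_0$, then intersect with $\ker\ep_1$ to obtain $h_0^{-1}h_1\in H_0$ and hence $h_0=h_1$, leaving exactly the identity arrows of $\ogpd(\I,H)$, which is the trivial groupoid on $\Omega H$. The paper passes through the intermediate description of $\ker\ep_0$ as coinitial pairs $(h_0,h_1)$ before taking the second kernel, whereas you collapse both conditions in one step, but the argument is the same.
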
 

\begin{proof}
The kernel of $\ep_0$ is the analogue of the set of pointed maps from the unit interval
(based at $0$) to a pointed space $(X,x_0)$.  We have
\[ \ker \ep_0 = \{ [h_0,e,h_1]: e \in H_0, h_0^{-1}eh_1 \; \text{exists} \} \]
and so $\ogpd_*(\I,H)$ may be identified with the groupoid whose arrows are pairs of coinitial arrows in $H$,
and a pair $(h_0,h_1) \in \ogpd_*(\I,H)$ then satisfies $(h_0,h_1)\dom = h_0, (h_0,h_1)\ran=h_1$ with composition
$(h_0,h_1)(h_1,h_2)=(h_0,h_2)$.  Then 
\[ \ep_1: \ker \ep_0 \ra H \,, (h_0,h_1) \mapsto h_0^{-1}h_1 \,. \]
whose kernel is the subset $\{ (h,h) : h \in H \}$ with $(h,h)\dom = h = (h,h)\ran$ and $(h,h)(h,h)=(h,h)$:
that is $\Omega H$.
\end{proof}

\begin{prop}
The projection $q_{\phi}:M^{\phi} \ra G$ defined by 
$q: \<h_0,a,h_1 \> \mapsto a$ is a strong fibration, and its restriction to $\der(\phi)$ is a covering. 
\end{prop}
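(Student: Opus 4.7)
I would handle the two claims separately. For the strong fibration property, I fix a commutative square
\[ \xymatrixcolsep{3pc}
\xymatrix{
A \ar[d]_{i_0} \ar[r]^{f} & M^{\phi} \ar[d]^{q_{\phi}}\\
A \times \I \ar[r]_{F} & G}
\]
and set up notation. For $y \in A_0$ write $yf = (y^*, h_y) \in M^\phi_0$, with $y^* = (y,0)F$ and $h_y \dom = y^* \phi$. For an arrow $a \colon x \to y$ of $A$ write $af = \<h_a, g_a, k_a\>$; from $(af)\dom = xf$ and $(af)\ran = yf$ we have $h_a = h_x$, $k_a = h_y$, and $g_a = (a,0)F$. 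Finally let $l_y = (y,\iota)F \in G$, so $l_y \dom = y^*$, and note that by functoriality of $F$ we have $(a,\iota)F = g_a l_y$.

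I would then build $\widetilde F \colon A \times \I \to M^\phi$ using the canonical identity choice at the new objects. Define
\[ (y,1)\widetilde F = (l_y\ran,\, (l_y\ran)\phi), \qquad (y,\iota)\widetilde F = \<h_y,\, l_y,\, (l_y\ran)\phi\>, \]
and then for each $a \colon x \to y$ in $A$ put $(a,\iota)\widetilde F = af \cdot (y,\iota)\widetilde F = \<h_a,\, g_a l_y,\, (l_y\ran)\phi\>$. Functoriality of $\widetilde F$ reduces to the identity $g_{ab} = g_a g_b$ (which holds because $F$ is a functor), and $q_\phi \widetilde F = F$ by construction. For order preservation, if $a \leq b$ in $A$ with $a\ran = y$ and $b\ran = y'$, then $h_a \leq h_b$ and $g_a \leq g_b$ since $f$ is ordered, $l_y \leq l_{y'}$ since $F$ is ordered, hence $g_a l_y \leq g_b l_{y'}$ by OG2, and $(l_y\ran)\phi \leq (l_{y'}\ran)\phi$ since $\phi$ is ordered.

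For the covering claim on $\der(\phi) = \ker p_\phi$, I would parameterize elements of the kernel as triples $\<(a\phi)h,\, a,\, h\>$ indexed by pairs $(a,h) \in G \times H$ with $h\dom = (a\ran)\phi$, having $\dom = (a\dom,(a\phi)h)$ and $\ran = (a\ran,h)$. Identities of $\der(\phi)$ are pairs $(e,h)$ with $e \in G_0$ and $h\dom = e\phi$. Given such an identity and $a \in \st_G(e)$, a lift in $\st_{\der(\phi)}(e,h)$ must take the form $\<(a\phi)h_1,\, a,\, h_1\>$ with $(a\phi)h_1 = h$, forcing $h_1 = (a^{-1}\phi) h$, which is well defined because $(a^{-1}\phi)\ran = (a\dom)\phi = e\phi = h\dom$. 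Thus $q_\phi$ restricts to a bijection $\st_{\der(\phi)}(e,h) \to \st_G(e)$, so $q_\phi|_{\der(\phi)}$ is a star-bijective ordered functor, i.e.\ a covering.

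The one delicate point is the choice of $(y,1)\widetilde F$ in the fibration argument: any arrow of $H$ starting at $(l_y\ran)\phi$ would satisfy the groupoid axioms, and the canonical identity choice is selected precisely because it depends only on $y$ and varies monotonically, which makes $\widetilde F$ simultaneously a functor and an ordered functor with no further effort.
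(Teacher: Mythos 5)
Your proof is correct, but it takes a more computational route than the paper on both halves. For the strong fibration claim, the paper simply observes that $q_{\phi}$ is by construction the pullback of $\ep_0 : \ogpd(\I,H) \ra H$ along $\phi$, and invokes the earlier facts that $\ep_0$ is a strong fibration (Proposition \ref{ep0isfib}) and that strong fibrations are stable under pullback (Proposition \ref{cov_is_strong}(iii)); your explicit lifting $(a,\iota)\widetilde F = \< h_x,\, g_a l_y,\, (l_y\ran)\phi\>$ is exactly what that structural argument produces when unwound (pair $F$ itself with the $\ep_0$-lift of $F\phi$ from Proposition \ref{ep0isfib}), so you have in effect reproved the pullback stability in this special case -- harmless, and your verification of ordering and functoriality is sound, but the citation is shorter and reuses machinery already in place. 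For the covering claim, the paper passes to the $(a,h)$ coordinates of Corollary \ref{ker_is_der} and computes $\ker(q_{\phi}|_{\der(\phi)}) = (\der(\phi))_0$, which by the kernel criterion gives star-injectivity, leaving star-surjectivity tacit; your argument is slightly more complete here, since solving $(a\phi)h_1 = h$ uniquely by $h_1 = (a^{-1}\phi)h$ (well defined because $(a^{-1}\phi)\ran = e\phi = h\dom$) establishes star-bijectivity in one stroke. The only points you leave implicit are routine: that $\widetilde F$ is also defined coherently on arrows of the form $(a,1)$ and $(a,\iota^{-1})$ (forced by functoriality and still order-preserving), and that your chosen $h_1$ indeed satisfies $h_1\dom = (a\ran)\phi$ so the lift lies in $\der(\phi)$; both check out immediately and are at the level of detail the paper itself omits.
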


\begin{proof}
The map $q_{\phi}$ is a pullback of $\ep_0: \ogpd(\I,H) \ra H$ and so is a strong fibration by Propositions \ref{ep0isfib} and 
\ref{cov_is_strong}.
Using the notational changes introduced in the proof of Corollary
\ref{ker_is_der} we have $q_{\phi}: \der(\phi) \ra G$ given by $(a,h) \mapsto a$, and hence 
\[
 \ker q_{\phi} = \{ (e,h) : e \in G_0, e \phi = h \dom \} = (\der(\phi))_0 \,. 
\]
\end{proof}

\end{document}